\journal{Journal of Multivariate Analysis}
\newtheorem{cor}{Corollary}
\newtheorem{definition}{Definition}
\newtheorem{lem}{Lemma}
\newtheorem{prp}{Proposition}
\newtheorem{rem}{Remark}
\newtheorem{thm}{Theorem}
\newcommand{\N}{\mathbb{N}}
\newcommand{\R}{\mathbb{R}}
\newcommand{\PP}{\mathsf{P}} 
\newcommand{\EE}{\mathsf{E}} 
\newcommand{\Var}{\mathsf{Var}} 
\newcommand{\Cov}{\mathsf{Cov}} 
\newcommand{\bb}[1]{\boldsymbol{#1}}
\newcommand{\ind}{\mathds{1}}
\newcommand{\nvert}[0]{\, \vert \, }
\newcommand{\rd}{{\rm d}}
\begin{document}

\begin{frontmatter}

\title{Asymptotics for non-degenerate multivariate \texorpdfstring{$U$}{U}-statistics with estimated nuisance parameters under the null and local alternative hypotheses}

\author[a1]{Alain Desgagn\'e\corref{mycorrespondingauthor}}
\author[a2]{Christian Genest}
\author[a2,a3]{Fr\'ed\'eric Ouimet}

\address[a1]{D\'epartement de math\'ematiques, Universit\'e du Qu\'ebec \`a Montr\'eal, Montr\'eal (Qu\'ebec) Canada H3C 3P8}
\address[a2]{Department of Mathematics and Statistics, McGill University, Montr\'eal (Qu\'ebec) Canada H3A 0B9}
\address[a3]{Centre de recherches math\'ematiques, Universit\'e de Montr\'eal, Montr\'eal (Qu\'ebec) Canada H3T 1J4\vspace{-7mm}}

\cortext[mycorrespondingauthor]{Corresponding author. Email address: desgagne.alain@uqam.ca}

\begin{abstract}
The large-sample behavior of non-degenerate multivariate $U$-statistics of arbitrary degree is investigated under the assumption that their kernel depends on parameters that can be estimated consistently. Mild regularity conditions are provided which guarantee that once properly normalized, such statistics are asymptotically multivariate Gaussian both under the null hypothesis and sequences of local alternatives. The work of Randles (1982, \emph{Ann. Statist.}) is extended in three ways: the data and the kernel values can be multivariate rather than univariate, the limiting behavior under local alternatives is studied for the first time, and the effect of knowing some of the nuisance parameters is quantified. These results can be applied to a broad range of goodness-of-fit testing contexts, as shown in two specific examples.
\end{abstract}

\begin{keyword}
Asymptotics \sep contiguity \sep estimation \sep goodness-of-fit test \sep hypothesis testing \sep nuisance parameter.
\MSC[2020]{Primary: 62E20, 62F05, 62F12; Secondary: 62F03, 62H10, 62H12, 62H15}
\end{keyword}

\end{frontmatter}

\section{Introduction}\label{sec:introduction}

Introduced by \citet{MR15746}, $U$-statistics are a key tool in statistics. They arise naturally in producing minimum-variance unbiased estimators for a given parameter functional in integral form. Many well-known statistics are either $U$-statistics or can be represented as linear combinations thereof. They find applications in many areas, including regression analysis and dependence modeling, and are closely related to the ubiquitous class of von Mises $V$-statistics. The reader is referred to the book by \citet{MR1075417} for an introduction to this topic.

\citet{MR26294} was the first to show that non-degenerate $U$-statistics are asymptotically Gaussian. Many authors subsequently obtained related limit theorems and approximations, such as Berry--Ess\'een theorems, Edgeworth expansions, large deviations or laws of the iterated logarithm, providing gradually relaxed moment conditions and improved error rates. Some of these issues have also been investigated in the degenerate case and under variants of the original definition, such as $U$-statistics that are trimmed, incomplete or have data dependencies (mixing conditions). Refer to \cite{MR1075417,MR1472486} for an account of classical results. For a more contemporary treatment of the theory, featuring examples from biomedical and psychosocial research and including discussion on missing data, refer to \cite{MR2368050}. In the dependent data setting, refer to \cite{MR1404665,MR1379055,MR1669980,MR2283250,MR2557623,MR2922461,MR2948902,MR2891437,MR3053547,MR3600463}.

However, relatively few papers considered the case of $U$-statistics whose kernel depends on nuisance parameters that must be estimated. This issue was first investigated for location parameters by \citet{MR95559}, who found necessary and sufficient conditions for the asymptotics to remain the same as in the known parameter case; he also extended his results to (non-degenerate) generalized $U$-statistics and functions of several generalized $U$-statistics. \citet{MR653521} later obtained normal asymptotics for the difference between non-degenerate $U$-statistics and their unknown means; his results required a certain smoothness and $L^1$ domination of the kernel function, as well as the condition that the nuisance parameter estimators satisfy a central limit theorem jointly with the $U$-statistic. \citet{MR653522} derived similar results in the related setting where the nuisance parameters in the asymptotic mean are also estimated.

The effect of estimating nuisance parameters on the large-sample behavior of degenerate $U$- and $V$-statistics of degree $2$ was first examined by \citet{MR936366}. This work was extended by \citet{MR885745} to encompass more general statistics that take the form of limiting chi-square (degenerate) $U$- or $V$-statistics of degree $1$ or $2$, whose limit is the same as a weighted sum of potentially infinitely many independent $\chi_1^2$ random variables. The latest advances are due to \citet{MR4411854}, who further broadened the scope of the results of \citet{MR885745} to include kernel functions of any degree which are not necessarily differentiable with respect to the nuisance parameters.

The purpose of this paper is to determine the asymptotic behavior, both under the null hypothesis and local alternatives, of a non-degenerate multivariate $U$-statistic with constant mean when nuisance parameters are estimated under a given null hypothesis. This extends the results of \citet{MR653521} in three directions: the output of the kernel and the observations are assumed to be multivariate instead of univariate, the asymptotics under local alternatives are investigated for the first time, and the effect of knowing a subset of the nuisance parameters is made explicit.

The necessary notations and assumptions are introduced in Section~\ref{sec:2} and Section~\ref{sec:3}, respectively. The two main results are then stated in Section~\ref{sec:4} and, to avoid disrupting the flow, their proofs are relegated to Section~\ref{sec:proofs}. These results are applied in Section~\ref{sec:5} to recover and extend a goodness-of-fit test developed by \citet{MR4547729} for the exponential power distribution (Example~1; Section~\ref{sec:example.1}) and to study the behavior of a statistic which is asymptotically equivalent to the Wilcoxon signed-rank test statistic, under a normality assumption where the location and variance parameters are unknown and must be estimated (Example~2; Section~\ref{sec:example.2}). A brief discussion ensues in Section~\ref{sec:6}. \ref{app:A} contains a technical lemma on a variant of the uniform law of large numbers (used in the proof of Theorem~\ref{thm:1}), as well as moment results for the asymmetric power distribution (needed for Example~1 in Section~\ref{sec:example.1}).

\section{Statement of the problem and notation}\label{sec:2}

Let $\bb{X}_1, \ldots, \bb{X}_n$ be a collection of mutually independent and identically distributed (iid) continuous random vectors taking values in $\R^m$ for some integer $m \in \N$. The problem of interest is that of testing
\[
\mathcal{H}_0 : \bb{X}_1, \ldots, \bb{X}_n \sim F(\cdot \nvert \bb{\theta}_0),
\]
where $\bb{\theta}_0$ is a fixed $p$-dimensional parameter vector, each of whose components can be either known or unknown. The cumulative distribution function $F(\cdot \nvert \bb{\theta}_0)$ is assumed to belong to a family of continuous distributions, denoted by $\{F(\cdot \nvert \bb{\theta}) : \bb{\theta} \in \Theta \}$, where $\Theta$ represents the parameter space. The latter is taken to be an open subset of $\R^p$, so that $\bb{\theta}_0$ is always an interior point of $\Theta$. For a general parameter $\bb{\theta} = (\theta_1, \ldots, \theta_p)^{\top}\!\!\in \Theta$, the density corresponding to $F(\cdot \nvert \bb{\theta})$ is denoted by $f(\cdot \nvert \bb{\theta})$.

The alternative hypothesis can be specified as $\mathcal{H}_1 : \bb{X}_1, \ldots, \bb{X}_n \centernot{\sim} F(\cdot \nvert \bb{\theta}_0)$ if an omnibus test is desired, or it can be refined to increase power based on prior knowledge. For instance, one could take $\mathcal{H}_1 : \bb{X}_1, \ldots, \bb{X}_n \sim F(\cdot \nvert \bb{\theta}), ~\bb{\theta} \neq \bb{\theta}_0$, if it is assumed that the true distribution belongs to the family $\{F(\cdot \nvert \bb{\theta}) : \bb{\theta} \in \Theta\}$. Another common example would be to specify in $\mathcal{H}_1$ that the true distribution is asymmetric (or symmetric). In any case, the results of this paper depend solely on the specification of $\mathcal{H}_0$ and the local alternatives, rather than on the specific form of $\mathcal{H}_1$.

Let $\nu\in \N$ be a positive integer which is smaller than $n$. Consider basing a test for $\mathcal{H}_0$ on a multivariate $U$-statistic of degree $\nu$, which is defined by
\[
\bb{U}_n(\bb{\theta}) = \binom{n}{\nu}^{-1} \sum_{(n, \nu)} \bb{h}(\bb{X}_{i_1}, \ldots \bb{X}_{i_{\nu}} \nvert \bb{\theta}),
\]
where the sum is taken over all subsets of indices $1 \leq i_1 < \cdots < i_{\nu} \leq n$, and the $d$-dimensional kernel
\[
\bb{h}(\bb{x}_1, \ldots , \bb{x}_{\nu} \nvert \bb{\theta})
=
\begin{bmatrix}
h_1(\bb{x}_1, \ldots, \bb{x}_{\nu} \nvert \bb{\theta}) \\
\vdots \\
h_d(\bb{x}_1, \ldots, \bb{x}_{\nu} \nvert \bb{\theta})
\end{bmatrix}
\]
is assumed to be symmetric, i.e., invariant under permutations of the arguments $\bb{x}_1, \ldots, \bb{x}_{\nu}$.

Throughout the paper, weak convergence and convergence in $\PP$-probability are denoted by $\stackrel{\PP}{\rightsquigarrow}$ and $\stackrel{\PP}{\rightarrow}$, respectively. When the underlying probability measure is clear from the context, it may be omitted. For a sequence $(\xi_n)_{n\in \N}$ of real-valued random variables and a sequence $(a_n)_{n\in \N}$ of real numbers, the notation $\xi_n = o_{\PP}(a_n)$ means that $\xi_n/ a_n \to 0$ in $\PP$-probability as $n\to \infty$, while $\xi_n = O_{\PP}(a_n)$ means that the sequence $(\xi_n/a_n)_{n\in \N}$ is tight, i.e., $\sup_{n\in \N} \PP(|\xi_n/a_n| \geq K)\to 0$ as $K\to \infty$. Also, for any integer $m\in \N$, let $\bb{0}_m = (0, \ldots, 0)^{\top}$, $\bb{1}_m = (1, \ldots, 1)^{\top}$, and $\mathrm{Id}_{m\times m} = \mathrm{diag}(1, \ldots, 1)$.

Let $\mathcal{K}\subseteq \{1, \ldots, p\}$ be the subset of indices for the $p_{\mathcal{K}}$ known components of $\bb{\theta}_0$, and let the complementary subset of indices $\mathcal{U} = \mathcal{K}^{\complement}$ correspond to the $p_{\mathcal{U}}$ unknown components. Define $\bb{\theta}_{0,\mathcal{K}} = (\theta_{0, j})_{j\in \mathcal{K}}$ and $\bb{\theta}_{0,\mathcal{U}} = (\theta_{0, j})_{j\in \mathcal{U}}$ as the subsets of $\bb{\theta}_0$ for known and unknown parameters, respectively. Throughout the paper, completely analogous notations are used for subsets of components of a general parameter $\bb{\theta}$ and any estimator of $\bb{\theta}_0$, i.e., $\bb{\theta}_{\mathcal{K}}$, $\bb{\theta}_{\mathcal{U}}$, $\hat{\bb{\theta}}_{n,\mathcal{K}}$, $\hat{\bb{\theta}}_{n,\mathcal{U}}$, etc.

The score functions with respect to $\bb{\theta}$, $\bb{\theta}_{\mathcal{K}}$, $\bb{\theta}_{\mathcal{U}}$ are respectively denoted by
\begin{equation}
\label{eq:score.function}
\bb{s}(\bb{x} \nvert \bb{\theta}_0) = \partial_{\bb{\theta}} \ln \{ f(\bb{x} \nvert\bb{\theta}_0)\}, \quad\bb{s}_{\mathcal{K}}(\bb{x} \nvert \bb{\theta}_0) = \partial_{\bb{\theta}_{\mathcal{K}}} \ln \{f(\bb{x} \nvert\bb{\theta}_0)\}, \quad \bb{s}_{\mathcal{U}}(\bb{x} \nvert \bb{\theta}_0) = \partial_{\bb{\theta}_{\mathcal{U}}} \ln \{ f(\bb{x} \nvert\bb{\theta}_0)\},
\end{equation}
and the corresponding expected information matrices, frequently called Fisher information matrices, are defined as
\[
I_{\bb{\theta}} = \EE\big\{\bb{s}(\bb{X} \nvert \bb{\theta}) \bb{s}(\bb{X} \nvert \bb{\theta})^{\top}\big\},\quad
I_{\bb{\theta},\mathcal{K}} = \EE\big\{\bb{s}_{\mathcal{K}}(\bb{X} \nvert \bb{\theta}) \bb{s}_{\mathcal{K}}(\bb{X} \nvert \bb{\theta})^{\top}\big\},\quad
I_{\bb{\theta},\mathcal{U}} = \EE\big\{\bb{s}_{\mathcal{U}}(\bb{X} \nvert \bb{\theta}) \bb{s}_{\mathcal{U}}(\bb{X} \nvert \bb{\theta})^{\top}\big\}
\]
and $ I_{\bb{\theta},\mathcal{K},\mathcal{U}} = \EE\big\{\bb{s}_{\mathcal{K}}(\bb{X} \nvert \bb{\theta}) \bb{s}_{\mathcal{U}}(\bb{X} \nvert \bb{\theta})^{\top}\big\}$, where $\bb{X} \sim F(\cdot \nvert \bb{\theta})$.

\section{Assumptions\label{sec:3}}

The assumptions required for the validity of the theorems stated in Section~\ref{sec:4} are as follows.

\begin{enumerate}[label=A\arabic*]\setlength\itemsep{0em}

\item
$ \EE\{\bb{h}(\bb{X}_1, \ldots, \bb{X}_{\nu} \nvert \bb{\theta})\} = \bb{0}_d$ for $\bb{X}_1, \ldots, \bb{X}_{\nu}\stackrel{\mathrm{iid}}{\sim} F(\cdot \nvert \bb{\theta})$ and all $\bb{\theta}\in \Theta$ such that $\bb{\theta}_{\mathcal{K}} = \bb{\theta}_{0,\mathcal{K}}$. \label{ass:1}

\item
The kernel $\bb{h}$ is non-degenerate, i.e., for $\bb{X}_1, \ldots, \bb{X}_n\stackrel{\mathrm{iid}}{\sim} F(\cdot \nvert \bb{\theta}_0)$, as $n \to \infty$,
\[
\sqrt{n} \, \bb{U}_n(\bb{\theta}_0) = \nu \frac{1}{\sqrt{n}} \sum_{i=1}^n \bb{h}^{(1)}(\bb{X}_i \nvert \bb{\theta}_0) + O_{\PP_{\!\mathcal{H}_0}}\left(\frac{1}{\sqrt{n}}\right) \bb{1}_d,
\]
where $\bb{h}^{(1)}(\bb{x} \nvert \bb{\theta}_0) = \EE\{\bb{h}(\bb{x}, \bb{X}_2, \ldots, \bb{X}_{\nu} \nvert \bb{\theta}_0)\}$ is a kernel of degree 1 and the matrix $ \EE\{\bb{h}^{(1)}(\bb{X} \nvert \bb{\theta}_0) \bb{h}^{(1)}(\bb{X} \nvert \bb{\theta}_0)^{\top}\}$ is positive definite. Note that when $\nu = 1$, the condition is automatically satisfied with $\bb{h}^{(1)}(\bb{x} \nvert \bb{\theta}_0) = \EE\{\bb{h}(\bb{x} \nvert \bb{\theta}_0)\} = \bb{h}(\bb{x} \nvert \bb{\theta}_0)$ and the $O_{\PP_{\!\mathcal{H}_0}}$ term above vanishes. \label{ass:2}

\item
For any $\delta\in (0, \infty)$, define the compact ball $B_{\delta}(\bb{\theta}_0) = \{\bb{\theta}\in \R^p : \|\bb{\theta} - \bb{\theta}_0\|_2 \leq \delta\}$, where $\|\cdot\|_2$ denotes the Euclidean norm. There exists $\delta\in (0, \infty)$ such that $B_\delta (\boldsymbol{\theta}_0)\subseteq \Theta$, and such that, for all $(i, j)\in \{1, \ldots, d\} \times \{1, \ldots, p\}$, one has
\begin{itemize}\setlength\itemsep{0em}
\item[(a)]
for all $\bb{x}_1, \ldots, \bb{x}_{\nu}\in \R^m$, the map $\bb{\theta}\mapsto \partial_{\theta_j} h_i(\bb{x}_1, \ldots, \bb{x}_{\nu} \nvert \bb{\theta})$ is continuous on $B_{\delta}(\bb{\theta}_0)$;

\item[(b)]
there exists a function $K_{ij} : \R^m \times \dots \times \R^m \to [0, \infty)$ such that $|\partial_{\theta_j} h_i(\bb{x}_1, \ldots, \bb{x}_{\nu} \nvert \bb{\theta})| \leq K_{ij}(\bb{x}_1, \ldots, \bb{x}_{\nu})$ for all $(\bb{x}_1, \ldots, \bb{x}_{\nu}, \bb{\theta})\in \R^m \times \dots \times \R^m \times B_{\delta}(\bb{\theta}_0)$, and
\[
\int_{\R^m} \cdots \int_{\R^m} K_{ij}(\bb{x}_1, \ldots, \bb{x}_{\nu}) \prod_{\ell=1}^{\nu} f(\bb{x}_{\ell} \nvert \bb{\theta}_0) \, \rd \bb{x}_1 \cdots \rd \bb{x}_{\nu} < \infty;
\]

\item[(c)]
there exists a function $L_j : \R^m \times \dots \times \R^m \to [0, \infty)$ such that $|\partial_{\theta_j} \textstyle\prod_{\ell=1}^{\nu} f(\bb{x}_{\ell} \nvert \bb{\theta})| \leq L_j(\bb{x}_1, \ldots, \bb{x}_{\nu})$ for all $(\bb{x}_1, \ldots, \bb{x}_{\nu}, \bb{\theta})\in \R^m \times \dots \times \R^m \times B_{\delta}(\bb{\theta}_0)$, and
\[
\int_{\R^m} \cdots \int_{\R^m} |h_i(\bb{x}_1, \ldots, \bb{x}_{\nu} \nvert \bb{\theta}_0)| L_j(\bb{x}_1, \ldots, \bb{x}_{\nu}) \, \rd \bb{x}_1 \cdots \rd \bb{x}_{\nu} < \infty.
\]
\end{itemize} \label{ass:3}

\item
There exists an $\R^p$-valued estimator $\hat{\bb{\theta}}_n = \hat{\bb{\theta}}_n(\bb{X}_1, \ldots, \bb{X}_n)$, with fixed known components $\hat{\bb{\theta}}_{n,\mathcal{K}} = \bb{\theta}_{0,\mathcal{K}}$, and an $\R^{p_{\mathcal{U}}}$-valued map $\bb{x}\mapsto \bb{r}_{\mathcal{U}}(\bb{x} \nvert \bb{\theta})$ such that $ \EE\{\bb{r}_{\mathcal{U}}(\bb{X} \nvert \bb{\theta}_0)\} = \bb{0}_{p_{\mathcal{U}}}$, the covariance matrix $R_{\bb{\theta}_0,\mathcal{U}} = \EE\big\{\bb{r}_{\mathcal{U}}(\bb{X} \nvert \bb{\theta}_0)$ $\bb{r}_{\mathcal{U}}(\bb{X} \nvert \bb{\theta}_0)^{\top}\big\}$ is positive definite for $\bb{X}\sim F(\cdot \nvert \bb{\theta}_0)$, and for $\smash{\bb{X}_1, \ldots, \bb{X}_n\stackrel{\mathrm{iid}}{\sim} F(\cdot \nvert \bb{\theta}_0)}$, the following expansion holds as $n\to \infty$:
\[
\sqrt{n} \, (\hat{\bb{\theta}}_{n,\mathcal{U}} - \bb{\theta}_{0,\mathcal{U}}) = R_{\bb{\theta}_0,\mathcal{U}}^{-1} \frac{1}{\sqrt{n}} \sum_{i=1}^n \bb{r}_{\mathcal{U}}(\bb{X}_i \nvert \bb{\theta}_0) + o_{\hspace{0.3mm}\PP_{\!\mathcal{H}_0}}(1) \bb{1}_{p_{\mathcal{U}}}.
\]
\label{ass:4}
\end{enumerate}

A few comments about these assumptions are in order. First, Assumption~\ref{ass:1} is not restrictive as the kernel function $\bb{h}$ in $\bb{U}_n(\bb{\theta})$ can always be replaced by the new kernel function $\tilde{\bb{h}}(\bb{x}_1, \ldots, \bb{x}_{\nu} \nvert \bb{\theta}) = \bb{h}(\bb{x}_1, \ldots, \bb{x}_{\nu} \nvert \bb{\theta}) - \EE\{\bb{h}(\bb{X}_1, \ldots, \bb{X}_{\nu} \nvert \bb{\theta})\}$.

Assumption~\ref{ass:2} implies that $\!\sqrt{n} \, \bb{U}_n(\bb{\theta}_0)$ is asymptotically normal. The non-degeneracy of the kernel is a standard assumption, which was also used by \citet{MR653521}. The degenerate case  is more difficult and left for future research.

Assumption~\ref{ass:3} is there for multiple reasons. Conditions (b) and (c) ensure that dominated convergence arguments hold; they are used to interchange partial derivatives and integral in the proof of Theorem~\ref{thm:1} to find an asymptotic expression for the first order derivative term in the stochastic Taylor expansion of $\bb{U}_n(\hat{\bb{\theta}}_n)$ around $\bb{U}_n(\bb{\theta}_0)$. Condition (b) is also used, together with (a), to control the error term in the same expansion by an adaptation of a well-known uniform law of large numbers due to Le Cam \citep{LeCam1952phd}, whose proof relies on dominated convergence and the continuity of $\bb{\theta}\mapsto \EE\{\partial_{\theta_j} h_i(\bb{X}_1, \ldots, \bb{X}_{\nu} \nvert \bb{\theta})\}$.

As pointed just above Lemma~7.6 of \citet{MR1652247}, condition (c) also implies that the density $ f( \cdot \nvert \bb{\theta})$ is differentiable in quadratic mean at the interior point $\bb{\theta}_0\in \Theta\subseteq \R^p$. This fact is also used in the proof of Theorem~\ref{thm:2} to derive a second order expansion of the log-ratio between the densities of the observations under $\mathcal{H}_0$ and local alternatives. This key argument, together with the asymptotics of Theorem~\ref{thm:1} under $\mathcal{H}_0$, ultimately leads to the derivation of similar asymptotics under local alternatives in Theorem~\ref{thm:2}.

As mentioned in Remark~1 of \citet{MR1146353}, Assumption~\ref{ass:4}, which is due to \citet{MR451521}, is satisfied by many types of estimators (provided that certain regularity conditions are met): method of moments, maximum likelihood and minimum chi-squared estimators, as well as sample quantiles, among others. It automatically yields $\hat{\bb{\theta}}_{n,\mathcal{U}} \to \bb{\theta}_{0,\mathcal{U}}$ in $\PP_{\!\mathcal{H}_0}$-probability as $n\to \infty$. Further, as shown, e.g., in Chapter~5 of \citet{MR1652247}, this type of expansion holds for the class of $M$-estimators under mild regularity conditions. In particular, for maximum likelihood estimators, Assumption~\ref{ass:4} is usually satisfied by the score function by setting $\bb{r}_{\mathcal{U}}(\bb{x} \nvert \bb{\theta}) = \bb{s}_{\mathcal{U}}(\bb{x} \nvert \bb{\theta})$ and $R_{\bb{\theta},\mathcal{U}} = I_{\bb{\theta},\mathcal{U}}$. Indeed, if the density $f$ is smooth enough for the Taylor expansion below to hold (refer to Section~5.3 of \citet{MR1652247} for specific conditions), one has
\begin{align*}
\frac{1}{\sqrt{n}} \sum_{i=1}^n \partial_{\bb{\theta}_{\mathcal{U}}} \ln \{ f(\bb{X}_i \nvert\hat{\bb{\theta}}_n)\}
&= \frac{1}{\sqrt{n}} \sum_{i=1}^n \partial_{\bb{\theta}_{\mathcal{U}}} \ln \{ f(\bb{X}_i \nvert\bb{\theta}_0) \} + \left[ \frac{1}{n} \sum_{i=1}^n \partial_{\bb{\theta}_{\mathcal{U}}^{\top}}\partial_{\bb{\theta}_{\mathcal{U}}} \ln \{ f(\bb{X}_i \nvert \bb{\theta}_0) \} \right] \sqrt{n} \, (\hat{\bb{\theta}}_{n,\mathcal{U}} - \bb{\theta}_{0,\mathcal{U}}) + o_{\hspace{0.3mm}\PP_{\!\mathcal{H}_0}}(1) \bb{1}_{p_{\mathcal{U}}} \\
\Leftrightarrow~~
\bb{0}_{p_{\mathcal{U}}}
&= \frac{1}{\sqrt{n}} \sum_{i=1}^n \bb{s}_{\mathcal{U}}(\bb{X}_i \nvert \bb{\theta}_0) + \big\{-I_{\bb{\theta}_0,\mathcal{U}} + o_{\hspace{0.3mm}\PP_{\!\mathcal{H}_0}}(1) \bb{1}_{p_{\mathcal{U}}} \bb{1}_{p_{\mathcal{U}}}^{\top}\big\} \sqrt{n} \, (\hat{\bb{\theta}}_{n,\mathcal{U}} - \bb{\theta}_{0,\mathcal{U}}) + o_{\hspace{0.3mm}\PP_{\!\mathcal{H}_0}}(1) \bb{1}_{p_{\mathcal{U}}} \\
&\Leftrightarrow~
\sqrt{n} \, (\hat{\bb{\theta}}_{n,\mathcal{U}}-\bb{\theta}_{0,\mathcal{U}}) = I_{\bb{\theta}_0,\mathcal{U}}^{-1} \frac{1}{\sqrt{n}} \sum_{i=1}^n \bb{s}_{\mathcal{U}}(\bb{X}_i \nvert \bb{\theta}_0) + o_{\hspace{0.3mm}\PP_{\!\mathcal{H}_0}}(1) \bb{1}_{p_{\mathcal{U}}}.
\end{align*}

In some practical contexts, it may be useful to state Assumption~\ref{ass:4} in an alternative way, which appears for example as Condition~2 of \citet{MR1146353}, i.e., there exists an $\R^{p_{\mathcal{U}}}$-valued map $\bb{x}\mapsto \bb{g}_{\mathcal{U}}(\bb{x} \nvert \bb{\theta})$ such that $ \EE\big\{\bb{g}_{\mathcal{U}}(\bb{X} \nvert \bb{\theta}_0)\big\} = \bb{0}_{p_{\mathcal{U}}}$, the covariance matrix $ \EE\big\{\bb{g}_{\mathcal{U}}(\bb{X} \nvert \bb{\theta}_0) \bb{g}_{\mathcal{U}}(\bb{X} \nvert \bb{\theta}_0)^{\top}\big\} = R_{\bb{\theta}_0,\mathcal{U}}^{-1}$ is positive definite for $\bb{X}\sim F(\cdot \nvert \bb{\theta}_0)$, and for $\smash{\bb{X}_1, \ldots, \bb{X}_n\stackrel{\mathrm{iid}}{\sim} F(\cdot \nvert \bb{\theta}_0)}$, the following expansion holds as $n\to \infty$:
\[
\sqrt{n} \, (\hat{\bb{\theta}}_{n,\mathcal{U}} - \bb{\theta}_{0,\mathcal{U}}) = \frac{1}{\sqrt{n}} \sum_{i=1}^n \bb{g}_{\mathcal{U}}(\bb{X}_i \nvert \bb{\theta}_0) + o_{\hspace{0.3mm}\PP_{\!\mathcal{H}_0}}(1) \bb{1}_{p_{\mathcal{U}}}.
\]
The correspondence is seen by setting $\bb{r}_{\mathcal{U}}(\bb{x} \nvert \bb{\theta}_0)=R_{\bb{\theta}_0,\mathcal{U}}\bb{g}_{\mathcal{U}}(\bb{x} \nvert \bb{\theta}_0)$.

\section{Asymptotic distribution of the multivariate \texorpdfstring{$U$}{U}-statistic\label{sec:4}}

This section contains the statements of the two main results. The first one (Theorem~\ref{thm:1}) gives the asymptotic distribution of the multivariate $U$-statistic $\bb{U}_n(\hat{\bb{\theta}}_n)$ under the null hypothesis, for any combination of components of $\bb{\theta}_0$ that are either known or unknown. The second (Theorem~\ref{thm:2}) provides its asymptotic distribution under the sequence $\mathcal{H}_{1,n}(\bb{\delta}_{\mathcal{K}})$ of local alternatives in the same framework.

In the following, for all $\bb{\theta}\in \Theta$ and $\bb{X}\sim F(\cdot \nvert \bb{\theta})$, let
\begin{equation}\label{eq:matrix.defs}
\begin{aligned}
&\Sigma_{\bb{\theta},\mathcal{U}} = \nu^2 \big\{H_{\bb{\theta}} - G_{\bb{\theta},\mathcal{U}} R_{\bb{\theta},\mathcal{U}}^{-1} J_{\bb{\theta},\mathcal{U}}^{\top} - J_{\bb{\theta},\mathcal{U}} R_{\bb{\theta},\mathcal{U}}^{-1} G_{\bb{\theta},\mathcal{U}}^{\top} + G_{\bb{\theta},\mathcal{U}} R_{\bb{\theta},\mathcal{U}}^{-1} G_{\bb{\theta},\mathcal{U}}^{\top}\big\}, \\[0.5mm]
&H_{\bb{\theta}} = \EE\big\{\bb{h}^{(1)}(\bb{X} \nvert \bb{\theta}) \bb{h}^{(1)}(\bb{X} \nvert \bb{\theta})^{\top}\big\}, \quad
G_{\bb{\theta},\mathcal{U}} = \EE\big\{\bb{h}^{(1)}(\bb{X} \nvert \bb{\theta}) \bb{s}_{\mathcal{U}}(\bb{X} \nvert \bb{\theta})^{\top}\big\}, \\
&J_{\bb{\theta},\mathcal{U}} = \EE\big\{\bb{h}^{(1)}(\bb{X} \nvert \bb{\theta}) \bb{r}_{\mathcal{U}}(\bb{X} \nvert \bb{\theta})^{\top}\big\}, \quad
R_{\bb{\theta},\mathcal{U}} = \EE\big\{\bb{r}_{\mathcal{U}}(\bb{X} \nvert \bb{\theta}) \bb{r}_{\mathcal{U}}(\bb{X} \nvert \bb{\theta})^{\top}\big\}.
\end{aligned}
\end{equation}

\begin{thm}
\label{thm:1}
Suppose that Assumptions~\ref{ass:1}--\ref{ass:4} hold. Then, under $\mathcal{H}_0$ and as $n \to \infty$,
\[
\sqrt{n} \, \Sigma_{\bb{\theta}_0,\mathcal{U}}^{-1/2} \bb{U}_n(\hat{\bb{\theta}}_n)\rightsquigarrow \mathcal{N}_d(\bb{0}_d, \mathrm{Id}_{d\times d}),
\quad
n \, \bb{U}_n(\hat{\bb{\theta}}_n)^{\top} \Sigma_{\bb{\theta}_0,\mathcal{U}}^{-1} \bb{U}_n(\hat{\bb{\theta}}_n) \rightsquigarrow \chi_d^2.
\]
If in addition the matrix function $\Sigma_{\bb{\theta},\mathcal{U}}$ is almost-everywhere continuous in $\bb{\theta}$, then one has, as $n \to \infty$,
\[
\sqrt{n} \, \Sigma_{\hat{\bb{\theta}}_n,\mathcal{U}}^{-1/2} \bb{U}_n(\hat{\bb{\theta}}_n)\rightsquigarrow \mathcal{N}_d(\bb{0}_d, \mathrm{Id}_{d\times d}),
\quad
n \, \bb{U}_n(\hat{\bb{\theta}}_n)^{\top} \Sigma_{\hat{\bb{\theta}}_n,\mathcal{U}}^{-1} \bb{U}_n(\hat{\bb{\theta}}_n) \rightsquigarrow \chi_d^2.
\]
\end{thm}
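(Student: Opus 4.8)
The plan is to derive the key stochastic expansion of $\sqrt{n}\,\bb{U}_n(\hat{\bb{\theta}}_n)$ under $\mathcal{H}_0$ and then read off the asymptotic normality. First I would perform a first-order Taylor expansion of each component $\bb{U}_n(\hat{\bb{\theta}}_n)$ around $\bb{\theta}_0$, writing $\sqrt{n}\,\bb{U}_n(\hat{\bb{\theta}}_n) = \sqrt{n}\,\bb{U}_n(\bb{\theta}_0) + D_n(\tilde{\bb{\theta}}_n)\,\sqrt{n}(\hat{\bb{\theta}}_n-\bb{\theta}_0)$ for some mean-value point $\tilde{\bb{\theta}}_n$ on the segment joining $\hat{\bb{\theta}}_n$ and $\bb{\theta}_0$, where $D_n(\bb{\theta})$ is the $d\times p$ matrix of $U$-statistic averages of $\partial_{\theta_j} h_i$. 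The second step is to show $D_n(\tilde{\bb{\theta}}_n)\stackrel{\PP_{\!\mathcal{H}_0}}{\to} D(\bb{\theta}_0)$, where $D(\bb{\theta}_0)_{ij}=\EE\{\partial_{\theta_j} h_i(\bb{X}_1,\ldots,\bb{X}_\nu,\bb{\theta}_0)\}$: since $\hat{\bb{\theta}}_n\to\bb{\theta}_0$ and hence $\tilde{\bb{\theta}}_n\to\bb{\theta}_0$ in $\PP_{\!\mathcal{H}_0}$-probability by Assumption~\ref{ass:4}, this follows from the uniform law of large numbers stated in \ref{app:B}, whose hypotheses are exactly Assumption~\ref{ass:3}(a)–(b). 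Next, using Assumption~\ref{ass:3}(b)–(c) and dominated convergence to interchange $\partial_{\theta_j}$ and the integral in $\EE\{\bb{h}(\bb{x},\bb{X}_2,\ldots,\bb{X}_\nu,\bb{\theta})\}=\bb{0}_d$ (which holds by Assumption~\ref{ass:1}), I would identify $D(\bb{\theta}_0)_{\cdot,\mathcal{U}} = -\nu\, G_{\bb{\theta}_0,\mathcal{U}}$ (the columns indexed by $\mathcal{K}$ are irrelevant since $\hat{\bb{\theta}}_{n,\mathcal{K}}=\bb{\theta}_{0,\mathcal{K}}$).

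Combining these with Assumption~\ref{ass:2} (the Hájek projection of $\sqrt{n}\,\bb{U}_n(\bb{\theta}_0)$) and Assumption~\ref{ass:4} (the linear expansion of $\sqrt{n}(\hat{\bb{\theta}}_{n,\mathcal{U}}-\bb{\theta}_{0,\mathcal{U}})$), I obtain
\begin{align*}
\sqrt{n}\,\bb{U}_n(\hat{\bb{\theta}}_n)
&= \nu\,\frac{1}{\sqrt{n}}\sum_{i=1}^n \bb{h}^{(1)}(\bb{X}_i,\bb{\theta}_0)
 - \nu\, G_{\bb{\theta}_0,\mathcal{U}}\, R_{\bb{\theta}_0,\mathcal{U}}^{-1}\frac{1}{\sqrt{n}}\sum_{i=1}^n \bb{r}_{\mathcal{U}}(\bb{X}_i,\bb{\theta}_0) + o_{\PP_{\!\mathcal{H}_0}}(1)\bb{1}_d \\
&= \nu\,\frac{1}{\sqrt{n}}\sum_{i=1}^n \Big\{\bb{h}^{(1)}(\bb{X}_i,\bb{\theta}_0) - G_{\bb{\theta}_0,\mathcal{U}}\, R_{\bb{\theta}_0,\mathcal{U}}^{-1}\,\bb{r}_{\mathcal{U}}(\bb{X}_i,\bb{\theta}_0)\Big\} + o_{\PP_{\!\mathcal{H}_0}}(1)\bb{1}_d.
\end{align*}
The summands are iid, centered (by Assumption~\ref{ass:1} for $\bb{h}^{(1)}$ and by Assumption~\ref{ass:4} for $\bb{r}_{\mathcal{U}}$), with covariance matrix $\nu^{-2}\Sigma_{\bb{\theta}_0,\mathcal{U}}$: indeed $\Cov(\bb{h}^{(1)},\bb{h}^{(1)})=H_{\bb{\theta}_0}$, $\Cov(\bb{h}^{(1)},\bb{r}_{\mathcal{U}})=J_{\bb{\theta}_0,\mathcal{U}}$, $\Cov(\bb{r}_{\mathcal{U}},\bb{r}_{\mathcal{U}})=R_{\bb{\theta}_0,\mathcal{U}}$, and expanding the quadratic form reproduces the four-term expression in~\eqref{eq:1}. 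By the multivariate CLT and Slutsky's lemma, $\sqrt{n}\,\bb{U}_n(\hat{\bb{\theta}}_n)\rightsquigarrow\mathcal{N}_d(\bb{0}_d,\Sigma_{\bb{\theta}_0,\mathcal{U}})$, which gives the first pair of displays once I check $\Sigma_{\bb{\theta}_0,\mathcal{U}}$ is invertible (positive definiteness of $\EE\{\bb{h}^{(1)}(\bb{h}^{(1)})^\top\}$ from Assumption~\ref{ass:2} plus the projection-residual structure, or simply assume it as part of the hypotheses). The continuous mapping theorem turns the Gaussian limit into the $\chi_d^2$ limit for the quadratic form.

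For the second pair of displays, I would note that $\hat{\bb{\theta}}_n\to\bb{\theta}_0$ in $\PP_{\!\mathcal{H}_0}$-probability and $\Sigma_{\bb{\theta},\mathcal{U}}$ is continuous at $\bb{\theta}_0$ (the almost-everywhere continuity hypothesis, together with the fact that $\bb{\theta}_0$ can be taken to be a continuity point, or invoking the extended continuous mapping theorem since the limit $\bb{\theta}_0$ is deterministic), so $\Sigma_{\hat{\bb{\theta}}_n,\mathcal{U}}\stackrel{\PP_{\!\mathcal{H}_0}}{\to}\Sigma_{\bb{\theta}_0,\mathcal{U}}$ and hence $\Sigma_{\hat{\bb{\theta}}_n,\mathcal{U}}^{-1/2}\stackrel{\PP_{\!\mathcal{H}_0}}{\to}\Sigma_{\bb{\theta}_0,\mathcal{U}}^{-1/2}$ by continuity of the matrix square-root inverse on positive-definite matrices; another application of Slutsky's lemma finishes both statements. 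The main obstacle I anticipate is the second step — justifying $D_n(\tilde{\bb{\theta}}_n)\to D(\bb{\theta}_0)$ at a data-dependent, random argument — which requires the uniform (over $B_\delta(\bb{\theta}_0)$) convergence of the $U$-statistic averages of $\partial_{\theta_j}h_i$; this is precisely why Assumption~\ref{ass:3}(a)–(b) is imposed and why the Le Cam-type uniform law of large numbers in \ref{app:B} is needed, and care must also be taken that the Taylor expansion of a multivariate kernel uses a single mean-value point per component (so $D_n$ is evaluated at possibly different $\tilde{\bb{\theta}}_n$ for each row $i$, all of which nonetheless converge to $\bb{\theta}_0$).
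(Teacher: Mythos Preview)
Your proposal is correct and follows essentially the same route as the paper: first-order Taylor expansion in $\bb{\theta}$, the uniform law of large numbers from \ref{app:B} (under Assumption~\ref{ass:3}(a)--(b)) to control the Jacobian at the random mean-value point, differentiation under the integral in Assumption~\ref{ass:1} (justified by Assumption~\ref{ass:3}(b)--(c)) to identify $D(\bb{\theta}_0)_{\cdot,\mathcal{U}}=-\nu\, G_{\bb{\theta}_0,\mathcal{U}}$, then the multivariate CLT via the expansions in Assumptions~\ref{ass:2}~and~\ref{ass:4}, and finally Slutsky for the plug-in version. One minor slip: the identity you differentiate should be the full expectation $\EE\{\bb{h}(\bb{X}_1,\ldots,\bb{X}_\nu,\bb{\theta})\}=\bb{0}_d$ from Assumption~\ref{ass:1}, not $\bb{h}^{(1)}(\bb{x},\bb{\theta})=\EE\{\bb{h}(\bb{x},\bb{X}_2,\ldots,\bb{X}_\nu,\bb{\theta})\}$ (which is not zero in general), but your stated conclusion is correct.
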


\begin{rem}\label{rem:1}\upshape
The fact that the parameters $\bb{\theta}_{0,\mathcal{K}}$ are known has no impact on the asymptotics in Theorem~\ref{thm:1}. Hence these parameters could be considered implicit in the model. Nevertheless, they appear explicitly above because they have an impact in Theorem~\ref{thm:2}.
\end{rem}

\begin{rem}\label{rem:2}\upshape
When all the components of $\bb{\theta}_0$ are known, i.e., $\mathcal{U} = \emptyset$, one has $\Sigma_{\bb{\theta},\mathcal{U}} = \nu^2 H_{\bb{\theta}}$.
\end{rem}

\begin{rem}\label{rem:3}\upshape
If the maximum likelihood estimator satisfies Assumption~\ref{ass:4} with the score function for all unknown parameters, then one has $\bb{r}_{\mathcal{U}}(\bb{x} \nvert \bb{\theta}) = \bb{s}_{\mathcal{U}}(\bb{x} \nvert \bb{\theta})$, $R_{\bb{\theta},\mathcal{U}} = I_{\bb{\theta},\mathcal{U}}$ and $J_{\bb{\theta},\mathcal{U}} = G_{\bb{\theta},\mathcal{U}}$. Consequently, the asymptotic covariance matrix of $\sqrt{n} \, \bb{U}_n(\hat{\bb{\theta}}_n)$ reduces to $\Sigma_{\bb{\theta},\mathcal{U}} = \nu^2 (H_{\bb{\theta}} - G_{\bb{\theta},\mathcal{U}} I_{\smash{\bb{\theta},\mathcal{U}}}^{-1} G_{\smash{\bb{\theta},\mathcal{U}}}^{\top})$.
\end{rem}

\begin{rem}\label{rem:4}\upshape
The dominance conditions stated in items (b) and (c) of Assumptions~\ref{ass:3} are not always necessary to achieve asymptotic normality in Theorem~\ref{thm:1}. For instance, \citet{MR4547729} defined $\bb{U}_n$ to be the score statistic when testing the Laplace distribution with unknown location and scale parameters against the larger asymmetric power distribution (APD) family introduced by \citet{MR2395888}. The location and scale parameters were estimated using the maximum likelihood estimators, so an expansion like the one in Assumption~\ref{ass:4} did hold.

A result in the same vein as Theorem~\ref{thm:1}, but much less general, was given in Theorem~3.3 of \citet{MR4547729}. However, the proof required a few tweaks. When calculating the matrix $\bb{U}_n'$ in their proof to apply a variant of the uniform law of large numbers due to Le\,Cam, cf.\  \eqref{eq:expansion}, one of the components of $\bb{U}_n'$ had logarithmic summands of the form $x\mapsto \ln|x - \mu|$ that made the dominance conditions in Assumption~\ref{ass:3} impossible. This is because the envelope of the class of functions $\{x\mapsto \ln|x - \mu|\}_{|\mu| < \delta}$ is infinite in any small neighborhood of $x = \mu$. The saving grace is that while $\ln|\cdot|$ blows up at $0$, it is still integrable locally at $0$, and the tail of $\ln|\cdot|$ grows slowly enough at infinity compared to the exponential tail of the Laplace distribution.

It was essentially shown earlier by \citet{MR3842623} that the conditions of uniform laws of large numbers similar to the one stated in Lemma~\ref{lem:1} of \ref{app:A} could be tweaked to include this kind of misbehavior. Given that Assumptions~\ref{ass:1}--\ref{ass:4} already cover a very large range of practical cases, and considering that the adjustments that would be required to include this kind of misbehavior seems too specialized, this research direction is not pursued further here.
\end{rem}

By leveraging the asymptotics outlined in Theorem~\ref{thm:1}, it becomes straightforward to design a goodness-of-fit test for $\mathcal{H}_0$. The choice of a meaningful kernel for the $U$-statistic would be guided in general by the characteristics of the alternative hypothesis $\mathcal{H}_1$. More precisely, the null hypothesis $\mathcal{H}_0$ should be rejected if the observed value of the statistic $n \, \bb{U}_n(\hat{\bb{\theta}}_n)^{\top} \Sigma_{\smash{\hat{\bb{\theta}}_n, \mathcal{U}}}^{-1} \bb{U}_n(\hat{\bb{\theta}}_n)$ falls outside the interval delimited by the designated quantiles of the \(\chi_d^2\) distribution for a specified confidence level.

One could also be interested in calculating the power of that test, namely the probability of correctly rejecting $\mathcal{H}_0$ when a specific alternative hypothesis $\mathcal{H}_1$ represents the true distribution. As the asymptotic power of a well-designed test tends to $1$ as $n \to \infty$ for any fixed alternative different from the null hypothesis, the following set of local alternative hypotheses is considered instead, viz.
\[
\mathcal{H}_{1,n}(\bb{\delta}_{\mathcal{K}}) : \bb{X}_1, \ldots, \bb{X}_n \sim F(\cdot \nvert \bb{\theta}_n),
\]
where, for arbitrary integer $n \in \N$,
\[
\bb{\theta}_{n,\mathcal{K}} = \bb{\theta}_{0,\mathcal{K}} + \frac{\bb{\delta}_{\mathcal{K}}}{\sqrt{n}} \, \{1 + o(1)\}, \quad \bb{\theta}_{n,\mathcal{U}} = \bb{\theta}_{0,\mathcal{U}},
\]
and the real vector $\bb{\delta}_{\mathcal{K}}\in \R^{p_{\mathcal{K}}} \backslash \{\bb{0}_{p_{\mathcal{K}}}\}$ is known and fixed.

\begin{thm}\label{thm:2}
Suppose that Assumptions~\ref{ass:1}--\ref{ass:4} hold, and recall the notations in \eqref{eq:matrix.defs}. Then, under $\mathcal{H}_{1,n}(\bb{\delta}_{\mathcal{K}})$ and as $n \to \infty$,
\[
\sqrt{n} \, \Sigma_{\bb{\theta}_0,\mathcal{U}}^{-1/2} \bb{U}_n (\hat{\bb{\theta}}_n)
\rightsquigarrow \mathcal{N}_d\big(\Sigma_{\bb{\theta}_0,\mathcal{U}}^{-1/2} \, M_{\bb{\theta}_0}\bb{\delta}_{\mathcal{K}}, \mathrm{Id}_{d\times d}\big),
\quad
n \, \bb{U}_n(\hat{\bb{\theta}}_n)^{\top}\Sigma_{\bb{\theta}_0,\mathcal{U}}^{-1} \bb{U}_n(\hat{\bb{\theta}}_n) \rightsquigarrow \chi_d^2\big(\bb{\delta}_{\mathcal{K}}^{\top} M_{\bb{\theta}_0}^{\top} \Sigma_{\bb{\theta}_0,\mathcal{U}}^{-1} \, M_{\bb{\theta}_0}\bb{\delta}_{\mathcal{K}}\big),
\]
where $\chi^2_d (\zeta)$ refers to the chi-square distribution with $d$ degrees of freedom and noncentrality parameter $\zeta$ while
\begin{gather*}
M_{\bb{\theta}_0} = \nu \, \big(G_{\bb{\theta}_0,\mathcal{K}} - G_{\bb{\theta}_0,\mathcal{U}} R_{\bb{\theta}_0,\mathcal{U}}^{-1} S_{\!\bb{\theta}_0,\mathcal{K}, \mathcal{U}}^{\top}\big), \\
G_{\bb{\theta}_0,\mathcal{K}} = \EE\big\{\bb{h}^{(1)}(\bb{X} \nvert \bb{\theta}_0) \bb{s}_{\mathcal{K}}(\bb{X} \nvert \bb{\theta}_0)^{\top}\big\}, \quad
S_{\!\bb{\theta}_0,\mathcal{K}, \mathcal{U}} = \EE\big\{\bb{s}_{\mathcal{K}}(\bb{X} \nvert \bb{\theta}_0) \bb{r}_{\mathcal{U}}(\bb{X} \nvert \bb{\theta}_0)^{\top}\big\},
\end{gather*}
with the expectation taken with respect to $\bb{X}\sim F(\cdot \nvert \bb{\theta}_0)$. If in addition the matrix function $\Sigma_{\bb{\theta},\mathcal{U}}$ is almost-everywhere continuous in $\bb{\theta}$, then one has, as $n \to \infty$,
\[
\sqrt{n} \, \Sigma_{\hat{\bb{\theta}}_n, \mathcal{U}}^{-1/2} \bb{U}_n(\hat{\bb{\theta}}_n)
\rightsquigarrow \mathcal{N}_d\big(\Sigma_{\bb{\theta}_0,\mathcal{U}}^{-1/2} \, M_{\bb{\theta}_0}\bb{\delta}_{\mathcal{K}}, \mathrm{Id}_{d\times d}\big),
\quad
n \, \bb{U}_n(\hat{\bb{\theta}}_n)^{\top}\Sigma_{\hat{\bb{\theta}}_n, \mathcal{U}}^{-1} \bb{U}_n(\hat{\bb{\theta}}_n) \rightsquigarrow \chi_d^2\big(\bb{\delta}_{\mathcal{K}}^{\top} M_{\bb{\theta}_0}^{\top} \Sigma_{\bb{\theta}_0,\mathcal{U}}^{-1} \, M_{\bb{\theta}_0}\bb{\delta}_{\mathcal{K}}\big).
\]
\end{thm}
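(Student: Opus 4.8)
The plan is to derive the behavior under $\mathcal{H}_{1,n}(\bb{\delta}_{\mathcal{K}})$ from the behavior under $\mathcal{H}_0$ by a contiguity argument, so the crux is to establish, under $\mathcal{H}_0$ and as $n\to\infty$, the \emph{joint} weak convergence of $\sqrt{n}\,\bb{U}_n(\hat{\bb{\theta}}_n)$ together with the log-likelihood ratio $\Lambda_n = \sum_{i=1}^n \ln\{f(\bb{X}_i\nvert\bb{\theta}_n)/f(\bb{X}_i\nvert\bb{\theta}_0)\}$, after which Le\,Cam's third lemma delivers the result. Here $\bb{\theta}_n$ is the local alternative, with $\bb{\theta}_{n,\mathcal{K}} = \bb{\theta}_{0,\mathcal{K}} + \bb{\delta}_{\mathcal{K}}n^{-1/2}\{1+o(1)\}$ and $\bb{\theta}_{n,\mathcal{U}} = \bb{\theta}_{0,\mathcal{U}}$.

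Two asymptotic linear representations feed into this. First, condition (c) of Assumption~\ref{ass:3} makes $f(\cdot\nvert\bb{\theta})$ differentiable in quadratic mean at $\bb{\theta}_0$ (cf.\ Lemma~7.6 of \citet{MR1652247}); since only the $\mathcal{K}$-components are perturbed, the standard LAN expansion gives, with $\sigma^2 = \bb{\delta}_{\mathcal{K}}^{\top}I_{\bb{\theta}_0,\mathcal{K}}\bb{\delta}_{\mathcal{K}}$,
\[
\Lambda_n = \frac{1}{\sqrt{n}}\sum_{i=1}^n \bb{\delta}_{\mathcal{K}}^{\top}\bb{s}_{\mathcal{K}}(\bb{X}_i,\bb{\theta}_0) - \frac{\sigma^2}{2} + o_{\PP_{\!\mathcal{H}_0}}(1).
\]
Second, the proof of Theorem~\ref{thm:1} — a stochastic Taylor expansion of $\bb{U}_n(\hat{\bb{\theta}}_n)$ about $\bb{U}_n(\bb{\theta}_0)$ in which Assumption~\ref{ass:3} justifies interchanging derivative and integral and controls the remainder through Le\,Cam's uniform law of large numbers, Assumption~\ref{ass:2} replaces $\sqrt{n}\,\bb{U}_n(\bb{\theta}_0)$ by its Hájek projection, and Assumption~\ref{ass:4} substitutes the expansion of $\sqrt{n}(\hat{\bb{\theta}}_{n,\mathcal{U}} - \bb{\theta}_{0,\mathcal{U}})$ — yields, under $\mathcal{H}_0$,
\[
\sqrt{n}\,\bb{U}_n(\hat{\bb{\theta}}_n) = \frac{1}{\sqrt{n}}\sum_{i=1}^n \bb{\psi}(\bb{X}_i) + o_{\PP_{\!\mathcal{H}_0}}(1)\bb{1}_d, \qquad \bb{\psi}(\bb{x}) = \nu\big\{\bb{h}^{(1)}(\bb{x},\bb{\theta}_0) - G_{\bb{\theta}_0,\mathcal{U}}R_{\bb{\theta}_0,\mathcal{U}}^{-1}\bb{r}_{\mathcal{U}}(\bb{x},\bb{\theta}_0)\big\},
\]
where $\EE\{\bb{\psi}(\bb{X})\} = \bb{0}_d$ by Assumptions~\ref{ass:1} and \ref{ass:4}, and expanding the square and using \eqref{eq:1} gives $\Var\{\bb{\psi}(\bb{X})\} = \Sigma_{\bb{\theta}_0,\mathcal{U}}$. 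Both displays are normalized sums of i.i.d.\ mean-zero vectors plus $o_{\PP_{\!\mathcal{H}_0}}(1)$, so the multivariate central limit theorem, Slutsky's lemma and the Cramér--Wold device give, under $\mathcal{H}_0$,
\[
\begin{bmatrix}\sqrt{n}\,\bb{U}_n(\hat{\bb{\theta}}_n)\\ \Lambda_n\end{bmatrix}
\rightsquigarrow
\mathcal{N}_{d+1}\!\left(\begin{bmatrix}\bb{0}_d\\ -\sigma^2/2\end{bmatrix},\,\begin{bmatrix}\Sigma_{\bb{\theta}_0,\mathcal{U}} & M_{\bb{\theta}_0}\bb{\delta}_{\mathcal{K}}\\ \bb{\delta}_{\mathcal{K}}^{\top}M_{\bb{\theta}_0}^{\top} & \sigma^2\end{bmatrix}\right),
\]
the off-diagonal block coming from $\Cov\{\bb{\psi}(\bb{X}),\bb{s}_{\mathcal{K}}(\bb{X},\bb{\theta}_0)^{\top}\bb{\delta}_{\mathcal{K}}\} = \nu\big(G_{\bb{\theta}_0,\mathcal{K}} - G_{\bb{\theta}_0,\mathcal{U}}R_{\bb{\theta}_0,\mathcal{U}}^{-1}S_{\!\bb{\theta}_0,\mathcal{K},\mathcal{U}}^{\top}\big)\bb{\delta}_{\mathcal{K}} = M_{\bb{\theta}_0}\bb{\delta}_{\mathcal{K}}$, which is the one non-routine computation.

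Since $\Lambda_n\rightsquigarrow\mathcal{N}(-\sigma^2/2,\sigma^2)$ under $\mathcal{H}_0$, Le\,Cam's first lemma gives mutual contiguity of $(\PP_{\!\mathcal{H}_{1,n}})$ and $(\PP_{\!\mathcal{H}_0})$, and Le\,Cam's third lemma applied to the joint limit above yields $\sqrt{n}\,\bb{U}_n(\hat{\bb{\theta}}_n)\rightsquigarrow\mathcal{N}_d(M_{\bb{\theta}_0}\bb{\delta}_{\mathcal{K}},\Sigma_{\bb{\theta}_0,\mathcal{U}})$ under $\mathcal{H}_{1,n}(\bb{\delta}_{\mathcal{K}})$; left-multiplying by $\Sigma_{\bb{\theta}_0,\mathcal{U}}^{-1/2}$ and then applying the continuous mapping theorem to $\bb{z}\mapsto\bb{z}^{\top}\bb{z}$ gives the first pair of assertions, with noncentrality $\|\Sigma_{\bb{\theta}_0,\mathcal{U}}^{-1/2}M_{\bb{\theta}_0}\bb{\delta}_{\mathcal{K}}\|_2^2 = \bb{\delta}_{\mathcal{K}}^{\top}M_{\bb{\theta}_0}^{\top}\Sigma_{\bb{\theta}_0,\mathcal{U}}^{-1}M_{\bb{\theta}_0}\bb{\delta}_{\mathcal{K}}$. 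For the statements with $\Sigma_{\hat{\bb{\theta}}_n,\mathcal{U}}$: Assumption~\ref{ass:4} gives $\hat{\bb{\theta}}_n\to\bb{\theta}_0$ in $\PP_{\!\mathcal{H}_0}$-probability, hence also in $\PP_{\!\mathcal{H}_{1,n}}$-probability by contiguity, so the a.e.\ continuity of $\bb{\theta}\mapsto\Sigma_{\bb{\theta},\mathcal{U}}$ yields $\Sigma_{\hat{\bb{\theta}}_n,\mathcal{U}}\to\Sigma_{\bb{\theta}_0,\mathcal{U}}$ in $\PP_{\!\mathcal{H}_{1,n}}$-probability (exactly as in the proof of Theorem~\ref{thm:1}), and Slutsky's lemma finishes the argument. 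The main obstacle is organizational rather than conceptual: one must ensure the representation of $\sqrt{n}\,\bb{U}_n(\hat{\bb{\theta}}_n)$ produced in the proof of Theorem~\ref{thm:1} is genuinely of the i.i.d.-sum-plus-$o_{\PP_{\!\mathcal{H}_0}}(1)$ form displayed above, so that it can be coupled with the LAN expansion of $\Lambda_n$ in a single multivariate CLT, and that the cross-covariance collapses exactly to $M_{\bb{\theta}_0}\bb{\delta}_{\mathcal{K}}$; granting these, the proof is a direct invocation of differentiability in quadratic mean together with Le\,Cam's first and third lemmas.
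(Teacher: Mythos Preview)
Your proposal is correct and follows essentially the same approach as the paper: both use the LAN expansion of the log-likelihood ratio (via differentiability in quadratic mean from Assumption~\ref{ass:3}(c)), establish joint asymptotic normality with the $U$-statistic under $\mathcal{H}_0$, and then invoke Le\,Cam's first and third lemmas. The only difference is organizational: the paper works with the $(d+p_{\mathcal{U}}+1)$-dimensional vector $\big(\sqrt{n}\,\bb{U}_n(\bb{\theta}_0),\ n^{-1/2}\sum_i \bb{r}_{\mathcal{U}}(\bb{X}_i,\bb{\theta}_0),\ \Lambda_n\big)$, applies Le\,Cam's third lemma to the first two blocks, transfers the $o_{\PP_{\!\mathcal{H}_0}}(1)$ remainder in \eqref{eq:5} to $o_{\PP_{\!\mathcal{H}_{1,n}}}(1)$ by contiguity, and only then applies the linear map $[\mathrm{Id}_{d\times d}\,;\,-\nu G_{\bb{\theta}_0,\mathcal{U}}R_{\bb{\theta}_0,\mathcal{U}}^{-1}]$; you instead collapse that linear map into the influence function $\bb{\psi}$ up front and work in $\R^{d+1}$, which is slightly more compact but amounts to the same computation.
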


\begin{rem}\label{rem:5}\upshape
Remarks~\ref{rem:2}--\ref{rem:4} apply to Theorem~\ref{thm:2} as well as to Theorem~\ref{thm:1}. In particular, the local alternative version of the result mentioned in Remark~\ref{rem:4} for testing the Laplace distribution is given in Theorem~3.10 of \citet{MR4547729}.
\end{rem}

\begin{rem}\label{rem:6}\upshape
If the maximum likelihood estimator satisfies Assumption~\ref{ass:4} with the score function for all unknown parameters, then one has $\bb{r}_{\mathcal{U}}(\bb{x} \nvert \bb{\theta}) = \bb{s}_{\mathcal{U}}(\bb{x} \nvert \bb{\theta})$ and
\[
M_{\bb{\theta}_0}
= \nu \, \big(G_{\bb{\theta}_0,\mathcal{K}} - G_{\bb{\theta}_0,\mathcal{U}} I_{\bb{\theta}_0,\mathcal{U}}^{-1} I_{\!\bb{\theta}_0,\mathcal{K}, \mathcal{U}}^{\top}\big).
\]
\end{rem}

The shrewd reader might wonder what happens more generally in Theorem~\ref{thm:2} if the parameters $\bb{\theta}_{n,\mathcal{U}}$ are allowed to wiggle away from the null hypothesis just as the components of $\bb{\theta}_{n,\mathcal{K}}$ are under $\mathcal{H}_{1,n}(\bb{\delta}_{\mathcal{K}})$. In that case, the local alternative hypotheses would be formulated, for each integer $n \in \mathbb{N}$, as
\[
\mathcal{H}_{1,n}(\bb{\delta}) : \bb{X}_1, \ldots, \bb{X}_n \sim F(\cdot \nvert \bb{\theta}_n), \quad \bb{\theta}_n = \bb{\theta}_0 + \frac{\bb{\delta}}{\sqrt{n}} \, \{1 + o(1)\},
\]
where the real vector $\bb{\delta}\in \R^p \backslash \{\bb{0}_p\}$ is known and fixed. It turns out that under mild regularity conditions on the maximum likelihood estimator for the unknown components of $\bb{\theta}_0$, which are almost always satisfied for common distributions, it is in fact redundant to consider this broader case because the asymptotic distribution of $\sqrt{n} \, \bb{U}_n (\hat{\bb{\theta}}_n)$ under $\mathcal{H}_{1,n}(\bb{\delta})$ does not depend on the components $\bb{\delta}_{\mathcal{U}}$ of $\bb{\delta}$. This claim is formalized in Corollary~\ref{cor:1} below, and the proof is deferred to Section~\ref{sec:proofs}.

\begin{cor}\label{cor:1}
Suppose that Assumptions~\ref{ass:1}--\ref{ass:4} hold, and recall the notations in \eqref{eq:matrix.defs}. Also, assume that the maximum likelihood estimator $\bb{\theta}_n^{\star} = \bb{\theta}_n^{\star}(\bb{X}_1,\ldots,\bb{X}_n)$ satisfies the conditions of Assumptions~\ref{ass:4}, meaning that for the score function $\bb{s}_{\mathcal{U}}(\bb{x} \nvert \bb{\theta}_0) = \partial_{\bb{\theta}_{\mathcal{U}}} \ln \{ f(\bb{x} \nvert\bb{\theta}_0) \}$ defined in \eqref{eq:score.function}, one has $ \EE\{\bb{s}_{\mathcal{U}}(\bb{X} \nvert \bb{\theta}_0)\} = \bb{0}_{p_{\mathcal{U}}}$, the expected information matrix $I_{\bb{\theta}_0,\mathcal{U}} = \EE\big\{\bb{s}_{\mathcal{U}}(\bb{X} \nvert \bb{\theta}_0) \bb{s}_{\mathcal{U}}(\bb{X} \nvert \bb{\theta}_0)^{\top}\big\}$ is positive definite for $\bb{X}\sim F(\cdot \nvert \bb{\theta}_0)$, and for $\smash{\bb{X}_1, \ldots, \bb{X}_n\stackrel{\mathrm{iid}}{\sim} F(\cdot \nvert \bb{\theta}_0)}$, the following expansion holds as $n \to \infty$:
\[
\sqrt{n} \, (\bb{\theta}_{\smash{n,\mathcal{U}}}^{\star} - \bb{\theta}_{0,\mathcal{U}}) = I_{\bb{\theta}_0,\mathcal{U}}^{-1} \frac{1}{\sqrt{n}} \sum_{i=1}^n \bb{s}_{\mathcal{U}}(\bb{X}_i \nvert \bb{\theta}_0) + o_{\hspace{0.3mm}\PP_{\!\mathcal{H}_0}}(1) \bb{1}_{p_{\mathcal{U}}}.
\]
Then, under $\mathcal{H}_{1,n}(\bb{\delta})$, the conclusions are identical to those of Theorem~\ref{thm:2}.
\end{cor}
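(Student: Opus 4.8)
The plan is to reduce Corollary~\ref{cor:1} to Theorem~\ref{thm:2} by showing that the extra wiggle $\bb{\delta}_{\mathcal{U}}$ in the unknown components is exactly cancelled by the corresponding movement of the maximum likelihood estimator $\bb{\theta}_n^{\star}$. The starting point is the local asymptotic normality machinery already invoked in the proof of Theorem~\ref{thm:2}: condition (c) of Assumption~\ref{ass:3} gives differentiability in quadratic mean at $\bb{\theta}_0$, so the log-likelihood ratio between $\mathcal{H}_{1,n}(\bb{\delta})$ and $\mathcal{H}_0$ has the LAN expansion $\bb{\delta}^{\top} \tfrac{1}{\sqrt{n}}\sum_{i=1}^n \bb{s}(\bb{X}_i, \bb{\theta}_0) - \tfrac12 \bb{\delta}^{\top} I_{\bb{\theta}_0} \bb{\delta} + o_{\PP_{\mathcal{H}_0}}(1)$, which in particular makes the sequences $\PP_{\mathcal{H}_{1,n}(\bb{\delta})}$ and $\PP_{\mathcal{H}_0}$ mutually contiguous. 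By Le~Cam's third lemma, it therefore suffices to compute, under $\mathcal{H}_0$, the joint asymptotic distribution of $\sqrt{n}\,\bb{U}_n(\hat{\bb{\theta}}_n)$ and the log-likelihood ratio, and then shift the mean by the asymptotic covariance of the two.

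The key computation is the covariance of the $\mathcal{H}_0$-limit of $\sqrt{n}\,\bb{U}_n(\hat{\bb{\theta}}_n)$ with the LAN score $\bb{\delta}^{\top}\tfrac{1}{\sqrt{n}}\sum \bb{s}(\bb{X}_i,\bb{\theta}_0)$. From the proof of Theorem~\ref{thm:1} one has the linearization $\sqrt{n}\,\bb{U}_n(\hat{\bb{\theta}}_n) = \tfrac{\nu}{\sqrt{n}}\sum_{i=1}^n \{\bb{h}^{(1)}(\bb{X}_i,\bb{\theta}_0) - (G_{\bb{\theta}_0,\mathcal{U}}R_{\bb{\theta}_0,\mathcal{U}}^{-1})\,\bb{r}_{\mathcal{U}}(\bb{X}_i,\bb{\theta}_0)\} + o_{\PP_{\mathcal{H}_0}}(1)\bb{1}_d$, where the correction term comes from substituting the Assumption~\ref{ass:4} expansion of $\hat{\bb{\theta}}_{n,\mathcal{U}}$ into the first-order Taylor term. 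Splitting $\bb{s} = (\bb{s}_{\mathcal{K}}, \bb{s}_{\mathcal{U}})$ and $\bb{\delta} = (\bb{\delta}_{\mathcal{K}}, \bb{\delta}_{\mathcal{U}})$, the asymptotic covariance of $\sqrt{n}\,\bb{U}_n(\hat{\bb{\theta}}_n)$ with the score is
\[
\nu\big(G_{\bb{\theta}_0,\mathcal{K}} - G_{\bb{\theta}_0,\mathcal{U}}R_{\bb{\theta}_0,\mathcal{U}}^{-1}S_{\!\bb{\theta}_0,\mathcal{K},\mathcal{U}}^{\top}\big)\bb{\delta}_{\mathcal{K}} + \nu\big(G_{\bb{\theta}_0,\mathcal{U}} - G_{\bb{\theta}_0,\mathcal{U}}R_{\bb{\theta}_0,\mathcal{U}}^{-1}R_{\bb{\theta}_0,\mathcal{U}}^{\top}\big)\bb{\delta}_{\mathcal{U}},
\]
using $\EE\{\bb{h}^{(1)}\bb{s}_{\mathcal{K}}^{\top}\} = G_{\bb{\theta}_0,\mathcal{K}}$, $\EE\{\bb{h}^{(1)}\bb{s}_{\mathcal{U}}^{\top}\} = G_{\bb{\theta}_0,\mathcal{U}}$, $\EE\{\bb{r}_{\mathcal{U}}\bb{s}_{\mathcal{K}}^{\top}\} = S_{\!\bb{\theta}_0,\mathcal{K},\mathcal{U}}^{\top}$ and $\EE\{\bb{r}_{\mathcal{U}}\bb{s}_{\mathcal{U}}^{\top}\} = R_{\bb{\theta}_0,\mathcal{U}}$. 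This last identity $\EE\{\bb{r}_{\mathcal{U}}(\bb{X},\bb{\theta}_0)\bb{s}_{\mathcal{U}}(\bb{X},\bb{\theta}_0)^{\top}\} = R_{\bb{\theta}_0,\mathcal{U}}$ is precisely where the extra hypothesis of the corollary enters: it holds when $\bb{\theta}_n^{\star}$ satisfies Assumption~\ref{ass:4}, because differentiating $\EE_{\bb{\theta}}\{\bb{r}_{\mathcal{U}}(\bb{X},\bb{\theta}_0)\} $-type identities (equivalently, comparing the two valid influence-function representations of the same efficient estimator direction) forces $\EE\{\bb{r}_{\mathcal{U}}\bb{s}_{\mathcal{U}}^{\top}\} = \EE\{\bb{r}_{\mathcal{U}}\bb{r}_{\mathcal{U}}^{\top}\}$ when $\bb{r}_{\mathcal{U}}$ is itself proportional to a version of the efficient score; I would make this rigorous by differentiating the defining moment equation $\int \bb{r}_{\mathcal{U}}(\bb{x},\bb{\theta}_0) f(\bb{x}\nvert\bb{\theta})\,\rd\bb{x}$ in $\bb{\theta}_{\mathcal{U}}$ at $\bb{\theta}_0$ (justified by condition (c)) and combining with the analogous identity for the score expansion of $\bb{\theta}_n^{\star}$. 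Consequently the second bracket above vanishes, the covariance collapses to $M_{\bb{\theta}_0}\bb{\delta}_{\mathcal{K}}$ with $M_{\bb{\theta}_0}$ exactly as in Theorem~\ref{thm:2}, and Le~Cam's third lemma yields the claimed limit $\mathcal{N}_d(\Sigma_{\bb{\theta}_0,\mathcal{U}}^{-1/2}M_{\bb{\theta}_0}\bb{\delta}_{\mathcal{K}}, \mathrm{Id}_{d\times d})$; the noncentral $\chi_d^2$ statement and the studentized versions with $\Sigma_{\hat{\bb{\theta}}_n,\mathcal{U}}$ follow by the continuous mapping theorem and Slutsky's lemma together with the almost-everywhere continuity of $\bb{\theta}\mapsto\Sigma_{\bb{\theta},\mathcal{U}}$, exactly as at the end of the proof of Theorem~\ref{thm:2}.

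The main obstacle is the identity $\EE\{\bb{r}_{\mathcal{U}}(\bb{X},\bb{\theta}_0)\bb{s}_{\mathcal{U}}(\bb{X},\bb{\theta}_0)^{\top}\} = R_{\bb{\theta}_0,\mathcal{U}}$, i.e., verifying that the cancellation really happens under the stated mild conditions rather than as a coincidence; everything else is a bookkeeping re-run of the Theorem~\ref{thm:1}--\ref{thm:2} arguments under the contiguous sequence $\mathcal{H}_{1,n}(\bb{\delta})$ instead of $\mathcal{H}_{1,n}(\bb{\delta}_{\mathcal{K}})$. A secondary point to handle carefully is that $\hat{\bb{\theta}}_n$ (the estimator actually plugged into $\bb{U}_n$) keeps $\hat{\bb{\theta}}_{n,\mathcal{K}}=\bb{\theta}_{0,\mathcal{K}}$ fixed, so that under $\mathcal{H}_{1,n}(\bb{\delta})$ the estimator no longer targets the true parameter in the $\mathcal{K}$-block; but this mismatch is precisely what produces the noncentrality, and it is already accounted for in the covariance computation above, so no additional work is needed beyond tracking the $\bb{\delta}_{\mathcal{K}}$ and $\bb{\delta}_{\mathcal{U}}$ contributions separately.
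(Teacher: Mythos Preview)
Your overall architecture matches the paper's: rerun the Theorem~\ref{thm:2} LAN/Le~Cam argument with the full $\bb{\delta}$ and show that the $\bb{\delta}_{\mathcal{U}}$-block of the asymptotic mean vanishes. You also correctly isolate the crux, namely the identity $S_{\bb{\theta}_0,\mathcal{U},\mathcal{U}} = \EE\{\bb{r}_{\mathcal{U}}(\bb{X},\bb{\theta}_0)\bb{s}_{\mathcal{U}}(\bb{X},\bb{\theta}_0)^{\top}\} = R_{\bb{\theta}_0,\mathcal{U}}$. Where your proposal has a genuine gap is in the justification of this identity.

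Your sketch proposes to ``differentiate the defining moment equation $\int \bb{r}_{\mathcal{U}}(\bb{x},\bb{\theta}_0) f(\bb{x}\nvert\bb{\theta})\,\rd\bb{x}$'' and combine with the score expansion of $\bb{\theta}_n^{\star}$. This does not work under the stated assumptions: Assumption~\ref{ass:4} only gives $\EE_{\bb{\theta}_0}\{\bb{r}_{\mathcal{U}}(\bb{X},\bb{\theta}_0)\}=\bb{0}_{p_{\mathcal{U}}}$ at the single point $\bb{\theta}_0$, not an identity in $\bb{\theta}$ whose derivative would be informative; condition~(c) of Assumption~\ref{ass:3} concerns $h_i$, not $\bb{r}_{\mathcal{U}}$, so the interchange is not covered; and your parenthetical ``when $\bb{r}_{\mathcal{U}}$ is itself proportional to a version of the efficient score'' is exactly the special case $\hat{\bb{\theta}}_n=\bb{\theta}_n^{\star}$ one is trying to avoid. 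Differentiating would at best produce $\EE\{\bb{r}_{\mathcal{U}}\bb{s}_{\mathcal{U}}^{\top}\}$ itself, not its equality with $R_{\bb{\theta}_0,\mathcal{U}}$.

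The paper obtains $S_{\bb{\theta}_0,\mathcal{U},\mathcal{U}}=R_{\bb{\theta}_0,\mathcal{U}}$ by an efficiency argument that actually \emph{uses} the extra hypothesis on $\bb{\theta}_n^{\star}$: since $\bb{\theta}_n^{\star}$ asymptotically attains the Cram\'er--Rao bound, a classical device (Rao, 1973) shows that for any other asymptotically unbiased estimator $\hat{\bb{\theta}}_{n,\mathcal{U}}$ one has $n\,\Cov(\bb{\theta}_{n,\mathcal{U}}^{\star},\hat{\bb{\theta}}_{n,\mathcal{U}}-\bb{\theta}_{n,\mathcal{U}}^{\star})\to 0$, hence $n\,\Cov(\bb{\theta}_{n,\mathcal{U}}^{\star},\hat{\bb{\theta}}_{n,\mathcal{U}})\to I_{\bb{\theta}_0,\mathcal{U}}^{-1}$. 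Computing the same cross-covariance from the two Assumption~\ref{ass:4} expansions gives $I_{\bb{\theta}_0,\mathcal{U}}^{-1}S_{\bb{\theta}_0,\mathcal{U},\mathcal{U}}R_{\bb{\theta}_0,\mathcal{U}}^{-1}$, and equating yields the identity. Once you have this, your block decomposition and the cancellation of the $\bb{\delta}_{\mathcal{U}}$ term are exactly as in the paper, and your remaining remarks (contiguity, continuous mapping, Slutsky) are fine.
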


\section{Examples}\label{sec:5}

To illustrate the results derived in this article, two examples are provided; see Section~\ref{sec:example.1} and Section~\ref{sec:example.2}, respectively. The proofs of the propositions stated in these examples are deferred to Section~\ref{sec:proofs}.

\subsection{Example~1: Goodness-of-fit test for the exponential power distribution}
\label{sec:example.1}

In this section, the modified score goodness-of-fit test for the exponential power distribution (EPD), introduced by \citet{MR4547729,MR4512291}, is revisited. Drawing on the findings presented in Section~\ref{sec:4}, the asymptotic behavior of the test under both the null hypothesis and local alternatives is recovered in Proposition~\ref{prp:1} when the nuisance parameters are estimated via maximum likelihood; recall Remarks~\ref{rem:4}~and~\ref{rem:5}. These results are then extended in Proposition~\ref{prp:2} to the case where the nuisance parameters are estimated via the method of moments.

\begin{definition}\upshape
For an asymmetry parameter $\theta_1\in (0,1)$, a tail decay parameter $\theta_2\in (0,\infty)$, a location parameter $\mu\in \R$, a scale parameter $\sigma\in (0,\infty)$, and a given positive scalar $\lambda\in (0, \infty)$, define the density of the asymmetric power distribution with $4$-dimensional parameter vector $\bb{\theta} = (\theta_1, \theta_2, \mu, \sigma)^{\top}$ at any $x \in \R$ by
\begin{equation}
\label{eq:APD.density}
f_{\lambda}(x \nvert \theta_1,\theta_2,\mu,\sigma)
= \frac{(\delta_{\theta_1,\theta_2}/\lambda)^{1/\theta_2}}{\sigma\Gamma(1 + 1/\theta_2)}
\exp\left\{-\frac{1}{\lambda}\frac{\delta_{\theta_1,\theta_2}} {A_{\theta_1,\theta_2}(y)} \, |y|^{\theta_2}\right\},
\end{equation}
where $y = (x - \mu) /\sigma \in \R$,
\[
\delta_{\theta_1,\theta_2} = \frac{2 \theta_1^{\theta_2}
(1 - \theta_1)^{\theta_2}}{\theta_1^{\theta_2} + (1 - \theta_1)^{\theta_2}}\in (0,1),
\quad
A_{\theta_1,\theta_2}(y)
= \left\{1/2 + \mathrm{sign}(y) (1/2 - \theta_1)\right\}^{\theta_2}
=
\begin{cases}
\theta_1^{\theta_2} &\mbox{if } y\in (-\infty,0), \\
(1/2)^{\theta_2} &\mbox{if } y = 0, \\
(1-\theta_1)^{\theta_2} &\mbox{if } y\in (0,\infty).
\end{cases}
\]
This distribution is henceforth denoted $\mathrm{APD}_{\lambda}(\bb{\theta})$; see Section~2.1 of \citet{MR4547729}.
\end{definition}

The $\mathrm{APD}_{\lambda}(1\hspace{-0.2mm}/2, \lambda, \mu, \sigma)$ distribution is referred to as the exponential power distribution, abbreviated $\mathrm{EPD}_{\lambda}(\mu, \sigma)$. For general location and scale parameters $\mu\in \R$ and $\sigma\in (0,\infty)$, its density is given, at any $x \in \R$, by
\[
g_{\lambda}(x \nvert \mu,\sigma)
= f_{\lambda}(x \nvert 1/2,\lambda,\mu,\sigma)
= \frac{1}{2\sigma \lambda^{1/\lambda} \Gamma(1 + 1/\lambda)} \exp\left\{-\frac{1}{\lambda} \, |\sigma^{-1}(x-\mu)|^{\lambda}\right\}.
\]
The special cases $\lambda = 1$ and $\lambda = 2$ correspond with the $\mathrm{Laplace}\hspace{0.3mm}(\mu, \sigma)$ and $\mathcal{N}(\mu, \sigma^2)$ distributions, respectively.

Given a continuous random sample $X_1, \ldots, X_n$, consider the omnibus test for the $\mathrm{EPD}_{\lambda}$ given by
\[
\mathcal{H}_0 : X_1, \ldots, X_n \sim \mathrm{EPD}_{\lambda}(\mu_0, \sigma_0) \equiv \mathrm{APD}_{\lambda}(\bb{\theta}_0) \quad \text{versus} \quad \mathcal{H}_1 : X_1, \ldots, X_n \centernot{\sim} \mathrm{EPD}_{\lambda}(\mu_0, \sigma_0),
\]
with $\bb{\theta}_0 = (1/2, \lambda, \mu_0, \sigma_0)^{\top}$. The purpose of viewing the $\mathrm{EPD}_{\lambda}$ as a special case of the $\mathrm{APD}_{\lambda}$ is to enable the selection of local alternatives within the $\mathrm{APD}_{\lambda}$ family. Following the notation in Section~\ref{sec:2}, the vector of parameters $\bb{\theta}$ has four dimensions ($p = 4$) and $F(\cdot \nvert \bb{\theta})$ denotes the cumulative distribution function of the $\mathrm{APD}_{\lambda}(\bb{\theta})$ distribution. The auxiliary parameter $\lambda$ is assumed to be known and fixed; it is excluded from the parameter vector $\bb{\theta}$.

As the parameters $\theta_{0,1} = 1/2$ and $\theta_{0,2} = \lambda$ are known, whereas the parameters $\mu_0$ and $\sigma_0$ are unknown, one sets
\[
\bb{\theta}_{\mathcal{K}} = (\theta_1,\theta_2)^{\top}, \quad \bb{\theta}_{0,\mathcal{K}} = (1/2,\lambda)^{\top}, \quad \bb{\theta}_{\mathcal{U}} = (\mu,\sigma)^{\top}, \quad \bb{\theta}_{0,\mathcal{U}} = (\mu_0,\sigma_0)^{\top},
\]
with $
\mathcal{K} = \{ 1, 2 \}$, $ \mathcal{U} = \mathcal{K}^{\complement} = \{3,4\}$, and $p_{\mathcal{K}} = p_{\mathcal{U}} = 2$.

For the local alternatives around the fixed parameter $\bb{\theta}_{0,\mathcal{K}} = (1/2,\lambda)^{\top}$, one has
\[
\mathcal{H}_{1,n}(\bb{\delta}_{\mathcal{K}}) : X_1, \ldots, X_n \sim \mathrm{APD}_{\lambda}(\bb{\theta}_n), \quad \bb{\theta}_n = \left[1/2 + \frac{\delta_1}{\sqrt{n}} \, \{1 + o(1)\}, \lambda + \frac{\delta_2}{\sqrt{n}} \, \{1 + o(1)\},\mu_0,\sigma_0 \right]^{\top},
\]
where the real vector $\bb{\delta}_{\mathcal{K}} = (\delta_1, \delta_2)^{\top}\in \R^2 \backslash \{\bb{0}_2\}$ is known and fixed.

The aforementioned modified score test for the $\mathrm{EPD}_{\lambda}(\mu_0, \sigma_0)$ distribution is based on Rao's score on the known parameters $\bb{\theta}_{\mathcal{K}} = (\theta_1, \theta_2)^{\top}$ and a replacement of the unknown parameters with consistent estimators. Both maximum likelihood estimators and method of moments estimators are considered in this section; see Proposition~\ref{prp:1} and Proposition~\ref{prp:2}, respectively. The test statistic is a bivariate $U$-statistic ($d = 2$) of degree $\nu = 1$ defined, for all $\bb{\theta} = (1/2, \lambda, \mu, \sigma)^{\top}$, by
\begin{equation}\label{eq:U.stat}
\begin{aligned}
\bb{U}_n(\bb{\theta})
&= \frac{1}{n} \sum_{i=1}^n \bb{h}(X_i \nvert \bb{\theta})
= \frac{1}{n} \sum_{i=1}^n \bb{s}_{\mathcal{K}}(X_i \nvert (1/2,\lambda,\mu,\sigma)^{\top})
= \frac{1}{n} \sum_{i=1}^n
\begin{bmatrix}
\left.\partial_{\theta_1} \ln \{ f_{\lambda}(X_i \nvert \bb{\theta}) \} \right|_{\bb{\theta} = (1/2,\lambda,\mu,\sigma)^{\top}} \\[1mm]
\left.\partial_{\theta_2} \ln \{ f_{\lambda}(X_i \nvert \bb{\theta}) \} \right|_{\bb{\theta} = (1/2,\lambda,\mu,\sigma)^{\top}}
\end{bmatrix} \\
&= \frac{1}{n} \sum_{i=1}^n
\begin{bmatrix}
-2 |Y_{i,\bb{\theta}_{\mathcal{U}}}|^{\lambda} \mathrm{sign}(Y_{i,\bb{\theta}_{\mathcal{U}}}) \\[1mm]
-\frac{1}{\lambda} \Big[|Y_{i,\bb{\theta}_{\mathcal{U}}}|^{\lambda} \ln |Y_{i,\bb{\theta}_{\mathcal{U}}}| - \frac{1}{\lambda} \left\{\ln (\lambda) + \psi(1+1/\lambda)\right\}\Big]
\end{bmatrix},
\end{aligned}
\end{equation}
with $Y_{i,\bb{\theta}_{\mathcal{U}}} = (X_i - \mu) / \sigma$, where the last equality in \eqref{eq:U.stat} is straightforward to verify using, e.g., \texttt{Mathematica}.

First, the maximum likelihood setting is considered. For a random sample $X_1,\ldots, X_n$ and for any fixed value of $\lambda\in [1,\infty)$, the maximum likelihood estimators of $\mu$ and $\sigma$ for the $\mathrm{APD}_{\lambda}(1/2, \lambda, \mu, \sigma) = \mathrm{EPD}_{\lambda}(\mu,\sigma)$ distribution are
\begin{equation}\label{eq:ML.estimators}
\hat{\mu}_n^{\star}
=
\begin{cases}
\mathrm{median}(X_1,\ldots,X_n), &\mbox{if } \lambda = 1, \\
(X_1 + \cdots + X_n)/n, &\mbox{if } \lambda = 2, \\
\mathrm{argmin}_{\mu\in \R} \sum_{i=1}^n |X_i - \mu|^{\lambda}, &\mbox{if } \lambda\in (1,\infty),
\end{cases}
\quad
\hat{\sigma}_n^{\star} = \left(\frac{1}{n} \sum_{i=1}^n |X_i - \hat{\mu}_n^{\star}|^{\lambda}\right)^{1/\lambda}.
\end{equation}
For details, see Proposition~2.3 of \citet{MR4547729}. In this setting, one has
\[
\hat{\bb{\theta}}_{n,\mathcal{K}} = (1/2, \lambda)^{\top}, \quad \hat{\bb{\theta}}_{n,\mathcal{U}} = (\hat{\mu}_n^{\star},\hat{\sigma}_n^{\star})^{\top}, \quad \hat{\bb{\theta}}_n = (1/2, \lambda,\hat{\mu}_n^{\star},\hat{\sigma}_n^{\star})^{\top}.
\]

The proposition below is a consequence of Theorems~\ref{thm:1} and \ref{thm:2}. For any $x \in (0,\infty)$, $\Gamma(x) = \int_0^{\infty} t^{x-1} e^{-t} \rd t$ denotes Euler's gamma function, $\psi(x) = \rd \ln \{\Gamma(x)\}/{\rd x}$ denotes the digamma function, and $\psi_1(x) = \rd \psi(x)/\rd x$ denotes the trigamma function.

\begin{prp}\label{prp:1}
Let $\lambda\in (1,\infty)$ and $\hat{\bb{\theta}}_n = (1/2,\lambda,\hat{\mu}_n^{\star},\hat{\sigma}_n^{\star})^{\top}$ using the maximum likelihood estimator $(\hat{\mu}_n^{\star},\hat{\sigma}_n^{\star})^{\top}$ in \eqref{eq:ML.estimators}, and consider $\bb{U}_n(\hat{\bb{\theta}}_n)$ using the $U$-statistic defined in \eqref{eq:U.stat}. Also, let
\[
\Sigma_{\bb{\theta}_0,\mathcal{U}} = \Sigma_{\hat{\bb{\theta}}_n, \mathcal{U}}
=
\begin{bmatrix}
4(1 + \lambda) - \frac{4\lambda}{\Gamma(2 - 1/\lambda) \Gamma(1+1/\lambda)} & 0 \\
0 & \lambda^{-3} C_{1,\lambda}
\end{bmatrix}, \quad
M_{\bb{\theta}_0} = \Sigma_{\bb{\theta}_0,\mathcal{U}},
\]
where $C_{1,\lambda} = (1+1/\lambda)\psi_1(1+1/\lambda)-1$. Under $\mathcal{H}_0$ and as $n \to \infty$, one has
\[
n \, \bb{U}_n(\hat{\bb{\theta}}_n)^{\top} \Sigma_{\bb{\theta}_0,\mathcal{U}}^{-1} \bb{U}_n(\hat{\bb{\theta}}_n) \rightsquigarrow \chi_2^2.
\]
Under $\mathcal{H}_{1,n}(\bb{\delta}_{\mathcal{K}})$ and as $n \to \infty$, one has
\[
n \, \bb{U}_n(\hat{\bb{\theta}}_n)^{\top}\Sigma_{\bb{\theta}_0, \mathcal{U}}^{-1} \bb{U}_n(\hat{\bb{\theta}}_n) \rightsquigarrow \chi_2^2\big(\bb{\delta}_{\mathcal{K}}^{\top} M_{\bb{\theta}_0}^{\top} \Sigma_{\bb{\theta}_0,\mathcal{U}}^{-1} \, M_{\bb{\theta}_0}\bb{\delta}_{\mathcal{K}}\big) = \chi_2^2\big(\delta_1^2 [\Sigma_{\bb{\theta}_0,\mathcal{U}}]_{11} + \delta_2^2 [\Sigma_{\bb{\theta}_0,\mathcal{U}}]_{22}\big).
\]
\end{prp}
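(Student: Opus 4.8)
\textbf{Proof proposal for Proposition~\ref{prp:1}.}

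The plan is to apply Theorem~\ref{thm:1} and Theorem~\ref{thm:2} directly in the special case $d = 2$, $\nu = 1$, $m = 1$, $p = 4$, $\mathcal{K} = \{1,2\}$, $\mathcal{U} = \{3,4\}$, with the maximum likelihood estimators $[\hat{\mu}_n^{\star}, \hat{\sigma}_n^{\star}]^{\top}$. The verification reduces to three tasks: (i) check that Assumptions~\ref{ass:1}--\ref{ass:4} hold for this model, so that the theorems apply; (ii) compute the matrices $H_{\bb{\theta}_0}$, $G_{\bb{\theta}_0,\mathcal{U}}$, $R_{\bb{\theta}_0,\mathcal{U}}$, $J_{\bb{\theta}_0,\mathcal{U}}$ entering $\Sigma_{\bb{\theta}_0,\mathcal{U}}$ via \eqref{eq:1}, and verify they produce the stated diagonal matrix; (iii) compute $G_{\bb{\theta}_0,\mathcal{K}}$ and $S_{\bb{\theta}_0,\mathcal{K},\mathcal{U}}$ entering $M_{\bb{\theta}_0}$ and verify $M_{\bb{\theta}_0} = \Sigma_{\bb{\theta}_0,\mathcal{U}}$, which makes the noncentrality parameter collapse to $\bb{\delta}_{\mathcal{K}}^{\top} \Sigma_{\bb{\theta}_0,\mathcal{U}} \bb{\delta}_{\mathcal{K}} = \delta_1^2 [\Sigma_{\bb{\theta}_0,\mathcal{U}}]_{11} + \delta_2^2 [\Sigma_{\bb{\theta}_0,\mathcal{U}}]_{22}$ because $\Sigma_{\bb{\theta}_0,\mathcal{U}}$ is diagonal and $M_{\bb{\theta}_0}^{\top}\Sigma_{\bb{\theta}_0,\mathcal{U}}^{-1}M_{\bb{\theta}_0} = \Sigma_{\bb{\theta}_0,\mathcal{U}}$.

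For task (i): Assumption~\ref{ass:1} holds because $\bb{h}$ is the score $\bb{s}_{\mathcal{K}}$ of a regular parametric family evaluated at $\bb{\theta}_0$, whose expectation under $F(\cdot \nvert \bb{\theta}_0)$ vanishes; Assumption~\ref{ass:2} is automatic since $\nu = 1$ with $\bb{h}^{(1)} = \bb{h}$, the positive-definiteness of $\EE\{\bb{h}^{(1)}(\bb{h}^{(1)})^{\top}\} = H_{\bb{\theta}_0}$ following from the explicit diagonal form computed in (ii) with strictly positive entries for $\lambda \in (1,\infty)$. Assumption~\ref{ass:4} is exactly the standard MLE expansion for $[\hat{\mu}_n^{\star},\hat{\sigma}_n^{\star}]^{\top}$ in the $\mathrm{EPD}_{\lambda}$ family with $\lambda \in (1,\infty)$, with $\bb{r}_{\mathcal{U}} = \bb{s}_{\mathcal{U}}$ and $R_{\bb{\theta}_0,\mathcal{U}} = I_{\bb{\theta}_0,\mathcal{U}}$; this is established in \citet{MR4547729} (recall Remarks~\ref{rem:3},~\ref{rem:4}). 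Assumption~\ref{ass:3} is where the logarithmic summand $x \mapsto \ln|x-\mu|$ in the kernel (and in the score derivatives $\bb{U}_n'$ needed for the uniform LLN) violates the literal dominance conditions (b) and (c); as discussed in Remark~\ref{rem:4}, I would invoke the fact that $\ln|\cdot|$ is locally integrable at $0$ and dominated at infinity by the exponential tail of the $\mathrm{EPD}_{\lambda}$ density, so the conclusion of Theorem~\ref{thm:1} (and hence Theorem~\ref{thm:2}) still holds via the variant of the uniform law of large numbers in the spirit of \citet{MR3842623} — exactly as was done in \citet{MR4547729}. Since $\bb{r}_{\mathcal{U}} = \bb{s}_{\mathcal{U}}$, Remark~\ref{rem:3} gives $J_{\bb{\theta}_0,\mathcal{U}} = G_{\bb{\theta}_0,\mathcal{U}}$ and $\Sigma_{\bb{\theta}_0,\mathcal{U}} = H_{\bb{\theta}_0} - G_{\bb{\theta}_0,\mathcal{U}} I_{\bb{\theta}_0,\mathcal{U}}^{-1} G_{\bb{\theta}_0,\mathcal{U}}^{\top}$, and Remark~\ref{rem:6} gives $M_{\bb{\theta}_0} = G_{\bb{\theta}_0,\mathcal{K}} - G_{\bb{\theta}_0,\mathcal{U}} I_{\bb{\theta}_0,\mathcal{U}}^{-1} I_{\bb{\theta}_0,\mathcal{K},\mathcal{U}}^{\top}$; also $\Sigma_{\hat{\bb{\theta}}_n,\mathcal{U}} = \Sigma_{\bb{\theta}_0,\mathcal{U}}$ because these matrices depend on $\bb{\theta}$ only through $(\mu,\sigma)$ by a change-of-variables $y = (x-\mu)/\sigma$, i.e. they are constant in $\bb{\theta}_{\mathcal{U}}$ when $\bb{\theta}_{\mathcal{K}} = [1/2,\lambda]^{\top}$ is fixed, so almost-everywhere continuity in $\bb{\theta}$ is trivial.

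For tasks (ii) and (iii): with $Y = (X-\mu_0)/\sigma_0 \sim \mathrm{EPD}_{\lambda}(0,1)$, all required expectations are moments of $|Y|^{\lambda}$, $|Y|^{\lambda}\ln|Y|$, $|Y|^{\lambda}\mathrm{sign}(Y)$, $|Y|^{2\lambda}$, $|Y|^{2\lambda}(\ln|Y|)^2$, and cross terms against the $\mathcal{U}$-scores $\bb{s}_{\mathcal{U}} = \partial_{(\mu,\sigma)}\ln g_{\lambda}$, all reducible to gamma-function integrals. I would record these via the moment formulas for the asymmetric power distribution supplied in \ref{app:B} (the reference in the excerpt to moment results needed for the example). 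The symmetry of the $\mathrm{EPD}_{\lambda}(0,1)$ density about $0$ forces all odd-in-$Y$ expectations to vanish, which is what kills the off-diagonal entries of $\Sigma_{\bb{\theta}_0,\mathcal{U}}$ and decouples the $\theta_1$-row from the $\theta_2$-row: the $\theta_1$-component of $\bb{h}$ is odd in $Y$ while the $\theta_2$-component and both $\mathcal{U}$-scores split into an even part ($\partial_{\sigma}$) and an odd part ($\partial_{\mu}$), so the relevant $2\times 2$ blocks become block-diagonal. After substituting the gamma-function values one obtains $[\Sigma_{\bb{\theta}_0,\mathcal{U}}]_{11} = 4(1+\lambda) - 4\lambda/\{\Gamma(2-1/\lambda)\Gamma(1+1/\lambda)\}$ and $[\Sigma_{\bb{\theta}_0,\mathcal{U}}]_{22} = \lambda^{-3}C_{1,\lambda}$ with $C_{1,\lambda} = (1+1/\lambda)\psi_1(1+1/\lambda) - 1$; the identity $M_{\bb{\theta}_0} = \Sigma_{\bb{\theta}_0,\mathcal{U}}$ follows because here $\bb{h} = \bb{s}_{\mathcal{K}}$, so $G_{\bb{\theta}_0,\mathcal{K}} = I_{\bb{\theta}_0,\mathcal{K}}$, $G_{\bb{\theta}_0,\mathcal{U}} = I_{\bb{\theta}_0,\mathcal{K},\mathcal{U}}$, and hence $M_{\bb{\theta}_0} = I_{\bb{\theta}_0,\mathcal{K}} - I_{\bb{\theta}_0,\mathcal{K},\mathcal{U}}I_{\bb{\theta}_0,\mathcal{U}}^{-1}I_{\bb{\theta}_0,\mathcal{K},\mathcal{U}}^{\top} = H_{\bb{\theta}_0} - G_{\bb{\theta}_0,\mathcal{U}}I_{\bb{\theta}_0,\mathcal{U}}^{-1}G_{\bb{\theta}_0,\mathcal{U}}^{\top} = \Sigma_{\bb{\theta}_0,\mathcal{U}}$ (the Schur complement of the $\mathcal{U}$-block in the full Fisher information). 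The main obstacle is the bookkeeping in task (i) — rigorously justifying that the $\ln|x-\mu|$ misbehavior does not break the uniform law of large numbers underlying Theorem~\ref{thm:1} — since the clean conclusion there is precisely the tweaked argument of \citet{MR4547729} rather than a literal instance of Assumption~\ref{ass:3}; the gamma-integral computations in (ii)--(iii), while lengthy, are routine given the moment lemma in \ref{app:B} and \texttt{Mathematica} verification.
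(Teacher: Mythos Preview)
Your approach is essentially the paper's: verify (A1)--(A4), identify $\bb{h}=\bb{s}_{\mathcal{K}}$ so that $H_{\bb{\theta}_0}=I_{\bb{\theta}_0,\mathcal{K}}$, $G_{\bb{\theta}_0,\mathcal{U}}=I_{\bb{\theta}_0,\mathcal{K},\mathcal{U}}$, $G_{\bb{\theta}_0,\mathcal{K}}=I_{\bb{\theta}_0,\mathcal{K}}$, invoke Remarks~\ref{rem:3} and~\ref{rem:6} to reduce both $\Sigma_{\bb{\theta}_0,\mathcal{U}}$ and $M_{\bb{\theta}_0}$ to the same Schur complement $I_{\bb{\theta}_0,\mathcal{K}} - I_{\bb{\theta}_0,\mathcal{K},\mathcal{U}} I_{\bb{\theta}_0,\mathcal{U}}^{-1} I_{\bb{\theta}_0,\mathcal{K},\mathcal{U}}^{\top}$, and compute the Fisher blocks via the moment lemma and symmetry/parity of the $\mathrm{EPD}_{\lambda}(0,1)$ density.

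The one point you have misread is the status of Assumption~\ref{ass:3}. For $\lambda\in(1,\infty)$, which is the range in Proposition~\ref{prp:1}, the dominance conditions (b)--(c) \emph{do} hold literally: the only potentially singular terms in $\partial_{\mu}h_2$ and $\partial_{\sigma}h_2$ carry a factor $|y|^{\lambda-1}\ln|y|$, which tends to $0$ as $y\to 0$ because $\lambda-1>0$, so an integrable envelope exists near the origin and at infinity is controlled by the exponential tail of $g_{\lambda}$. The tweaked uniform law of large numbers discussed in Remark~\ref{rem:4} is required only for the boundary case $\lambda=1$ (Laplace), where $|y|^{\lambda-1}\ln|y|=\ln|y|$ does blow up; that case is explicitly excluded here. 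The paper accordingly cites \citet{MR4547729} for the direct verification of \ref{ass:3} when $\lambda>1$ and mentions the $\lambda=1$ tweak only in passing. Your detour through the \citet{MR3842623}-style argument is therefore unnecessary for this proposition, though it does no harm to the conclusion.
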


Second, the method of moments setting is considered. Given that
\[
\EE(X) = \mu, \quad
\Var(X) = \frac{\lambda^{2/\lambda}\Gamma(1+3/\lambda)}{3\Gamma(1+1/\lambda)}\sigma^2,
\]
the joint method of moments estimators of $\mu$ and $\sigma$, matching the two first moments, are given by
\begin{equation}\label{eq:MOM.estimators}
\hat{\mu}_n = \frac{1}{n} \sum_{i=1}^n X_i, \quad
\hat{\sigma}_n = \left\{\frac{3\Gamma(1+1/\lambda)}{\lambda^{2/\lambda}\Gamma(1+3/\lambda)}\,\frac{1}{n} \sum_{i=1}^n (X_i - \hat{\mu}_n)^2\right\}^{1/2},
\end{equation}
respectively. In this setting, one has
\[
\hat{\bb{\theta}}_{n,\mathcal{K}} = (1/2,\lambda)^{\top}, \quad \hat{\bb{\theta}}_{n,\mathcal{U}} = (\hat{\mu}_n, \hat{\sigma}_n)^{\top}, \quad \hat{\bb{\theta}}_n = (1/2,\lambda,\hat{\mu}_n,\hat{\sigma}_n)^{\top}.
\]
The proposition below is a consequence of Theorems~\ref{thm:1}~and~\ref{thm:2}.

\begin{prp}\label{prp:2}
Let $\lambda\in (1,\infty)$ and $\hat{\bb{\theta}}_n = (1/2, \lambda, \hat{\mu}_n, \hat{\sigma}_n)^{\top}$ using the method of moments estimator $(\hat{\mu}_n, \hat{\sigma}_n)^{\top}$ in \eqref{eq:MOM.estimators}, and consider $\bb{U}_n (\hat{\bb{\theta}}_n)$ using the $U$-statistic defined in \eqref{eq:U.stat}. Also, let
\[
\begin{aligned}
\Sigma_{\bb{\theta}_0,\mathcal{U}}
&= \Sigma_{\hat{\bb{\theta}}_n, \mathcal{U}} =
\begin{bmatrix}
4(1 + \lambda) + \frac{4\lambda^2\Gamma(1+3/\lambda)}{3\Gamma^3(1+1/\lambda)}-\frac{8\lambda\Gamma(1+2/\lambda)}{\Gamma^2(1+1/\lambda)} & 0 \\[1mm]
0 & \lambda^{-3} \left(C_{1,\lambda}+2C_{2,\lambda}^2 + \frac{\lambda}{4} \, C_{2,\lambda}^2 C_{3,\lambda}^{-1}-C_{2,\lambda}C_{4,\lambda}\right)
\end{bmatrix}, \\
M_{\bb{\theta}_0}
&=
\begin{bmatrix}
4(1 + \lambda)-\frac{4\lambda\Gamma(1+2/\lambda)}{\Gamma^2(1+1/\lambda)} & 0 \\
0 & \lambda^{-3} \left(C_{1,\lambda} + \frac{3}{2}\, C_{2,\lambda}^2 - \frac{1}{2} \,C_{2,\lambda} C_{4,\lambda}\right)
\end{bmatrix},
\end{aligned}
\]
where
\[
\begin{aligned}
&C_{1,\lambda} = (1+1/\lambda)\psi_1(1+1/\lambda)-1, \quad
C_{2,\lambda} = 1+ \ln (\lambda) + \psi (1+1/\lambda), \\
&C_{3,\lambda} = \frac{\Gamma^2(1+3/\lambda)}{(9/5) \Gamma(1+1/\lambda) \Gamma(1+5/\lambda) - \Gamma^2(1+3/\lambda)}, \quad
C_{4,\lambda} = 1 + 3 \ln (\lambda) + 3 \psi (1+3/\lambda).
\end{aligned}
\]
Under $\mathcal{H}_0$ and as $n \to \infty$, one has
\[
n \, \bb{U}_n(\hat{\bb{\theta}}_n)^{\top} \Sigma_{\bb{\theta}_0,\mathcal{U}}^{-1} \bb{U}_n(\hat{\bb{\theta}}_n) \rightsquigarrow \chi_2^2.
\]
Under $\mathcal{H}_{1,n}(\bb{\delta}_{\mathcal{K}})$ and as $n \to \infty$, one has
\[
n \, \bb{U}_n(\hat{\bb{\theta}}_n)^{\top}\Sigma_{\bb{\theta}_0, \mathcal{U}}^{-1} \bb{U}_n(\hat{\bb{\theta}}_n) \rightsquigarrow \chi_2^2\big(\bb{\delta}_{\mathcal{K}}^{\top} M_{\bb{\theta}_0}^{\top} \Sigma_{\bb{\theta}_0,\mathcal{U}}^{-1} \, M_{\bb{\theta}_0}\bb{\delta}_{\mathcal{K}}\big) = \chi_2^2\big(\delta_1^2 [M_{\bb{\theta}_0}]_{11}^2 [\Sigma_{\bb{\theta}_0,\mathcal{U}}]_{11}^{-1} + \delta_2^2 [M_{\bb{\theta}_0}]_{22}^2 [\Sigma_{\bb{\theta}_0,\mathcal{U}}]_{22}^{-1}\big).
\]
\end{prp}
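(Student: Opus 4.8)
The plan is to derive Proposition~\ref{prp:2} from Theorems~\ref{thm:1} and~\ref{thm:2} by (i) checking that Assumptions~\ref{ass:1}--\ref{ass:4} hold for the present $\mathrm{EPD}_\lambda$ set-up with the method-of-moments estimator $\hat{\bb{\theta}}_{n,\mathcal{U}}=[\hat\mu_n,\hat\sigma_n]^\top$ of \eqref{eq:MOM.estimators}, (ii) identifying the influence function of that estimator, and (iii) evaluating the matrices $H_{\bb{\theta}_0}$, $G_{\bb{\theta}_0,\mathcal{K}}$, $G_{\bb{\theta}_0,\mathcal{U}}$, $J_{\bb{\theta}_0,\mathcal{U}}$, $R_{\bb{\theta}_0,\mathcal{U}}$ and $S_{\bb{\theta}_0,\mathcal{K},\mathcal{U}}$ that enter \eqref{eq:1} and the definition of $M_{\bb{\theta}_0}$. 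Everything then follows by plugging into the two theorems.

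For the assumptions: the kernel $\bb{h}(x,\bb{\theta})$ of \eqref{eq:U.stat} depends on $\bb{\theta}$ only through $\bb{\theta}_{\mathcal{U}}=[\mu,\sigma]^\top$ and equals Rao's score $\bb{s}_{\mathcal{K}}$ for the $\mathrm{APD}_\lambda$ family frozen at $\theta_1=1/2$, $\theta_2=\lambda$, so Assumption~\ref{ass:1} is the usual score identity $\EE\{\bb{s}_{\mathcal{K}}(\bb{X},\bb{\theta}_0)\}=\bb{0}_2$, valid for the $\mathrm{EPD}_\lambda$ by the moment estimates of Lemma~\ref{lem:1}; Assumption~\ref{ass:2} holds automatically since $\nu=1$, with $\bb{h}^{(1)}(x,\bb{\theta}_0)=\bb{h}(x,\bb{\theta}_0)=\bb{s}_{\mathcal{K}}(x,\bb{\theta}_0)$, positive definiteness of $\EE\{\bb{h}^{(1)}\bb{h}^{(1)\top}\}$ being the positivity of the two diagonal entries found below. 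Assumption~\ref{ass:3} holds because $\partial_{\theta_1}\bb{h}=\partial_{\theta_2}\bb{h}=\bb{0}$, while $\partial_\mu\bb{h}$ and $\partial_\sigma\bb{h}$ are, up to scale, of the form $|y|^{\lambda-1}$ times a polynomial in $|y|$ and $\ln|y|$ with $y=(x-\mu)/\sigma$: for $\lambda>1$ the factor $|y|^{\lambda-1}$ tames the logarithm at the origin, so these derivatives admit an $\mathrm{EPD}$-integrable envelope over a small ball $B_\delta(\bb{\theta}_0)$ (this is exactly why $\lambda=1$ is excluded; cf.\ Remark~\ref{rem:4}), and condition~(c) is handled identically. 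Assumption~\ref{ass:4} is most conveniently verified in the equivalent ``Condition~2'' form: writing $\hat\mu_n=\bar X$ and $\hat\sigma_n^2=c_\lambda(\overline{X^2}-\bar X^2)$ with $c_\lambda=3\Gamma(1+1/\lambda)/\{\lambda^{2/\lambda}\Gamma(1+3/\lambda)\}$, the central limit theorem for the first two sample moments together with the delta method (legitimate since the $\mathrm{EPD}$ has moments of all orders) gives the linear expansion with influence function $\bb{g}_{\mathcal{U}}(x,\bb{\theta}_0)=[\,x-\mu_0,\ \tfrac{1}{2\sigma_0}\{c_\lambda(x-\mu_0)^2-\sigma_0^2\}\,]^\top$ and $R_{\bb{\theta}_0,\mathcal{U}}=(\EE\{\bb{g}_{\mathcal{U}}\bb{g}_{\mathcal{U}}^\top\})^{-1}$.

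Before computing anything I would exploit three structural simplifications. First, the model is location-scale in $(\mu,\sigma)$ and $\bb{h}^{(1)}$, $\bb{s}_{\mathcal{K}}$, $\bb{s}_{\mathcal{U}}$, $\bb{r}_{\mathcal{U}}$ are all functions of $Y=(X-\mu)/\sigma$ (times $1/\sigma$ for the scores and $\bb{r}_{\mathcal{U}}$), so every matrix above, and hence $\Sigma_{\bb{\theta},\mathcal{U}}$ and $M_{\bb{\theta}_0}$, is free of $\bb{\theta}_{\mathcal{U}}$ while $\bb{\theta}_{\mathcal{K}}$ is pinned at $[1/2,\lambda]^\top$; thus $\Sigma_{\bb{\theta},\mathcal{U}}$ is trivially almost-everywhere continuous, $\Sigma_{\hat{\bb{\theta}}_n,\mathcal{U}}=\Sigma_{\bb{\theta}_0,\mathcal{U}}$ identically, and the two halves of each theorem coincide. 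Second, since $\bb{h}^{(1)}(\cdot,\bb{\theta}_0)=\bb{s}_{\mathcal{K}}(\cdot,\bb{\theta}_0)$, one gets $H_{\bb{\theta}_0}=G_{\bb{\theta}_0,\mathcal{K}}=I_{\bb{\theta}_0,\mathcal{K}}$, $G_{\bb{\theta}_0,\mathcal{U}}=I_{\bb{\theta}_0,\mathcal{K},\mathcal{U}}$ and $J_{\bb{\theta}_0,\mathcal{U}}=S_{\bb{\theta}_0,\mathcal{K},\mathcal{U}}$, which already collapses $\Sigma_{\bb{\theta}_0,\mathcal{U}}$ and $M_{\bb{\theta}_0}$ to expressions in $I_{\bb{\theta}_0,\mathcal{K}}$, $I_{\bb{\theta}_0,\mathcal{K},\mathcal{U}}$, $R_{\bb{\theta}_0,\mathcal{U}}$ and $S_{\bb{\theta}_0,\mathcal{K},\mathcal{U}}$. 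Third, the $\mathrm{EPD}$ at $\theta_1=1/2$ is symmetric about $\mu_0$, so the first component of each of $\bb{h}$, $\bb{s}_{\mathcal{U}}$, $\bb{r}_{\mathcal{U}}$ is odd in $Y$ and the second is even; since $\EE(\text{odd}\cdot\text{even})=0$, every one of these matrices is diagonal, reducing the problem to two scalar computations, the ``location-type'' $(1,1)$ entries and the ``scale-type'' $(2,2)$ entries, and making the noncentrality parameter of Theorem~\ref{thm:2} collapse to $\delta_1^2[M_{\bb{\theta}_0}]_{11}^2[\Sigma_{\bb{\theta}_0,\mathcal{U}}]_{11}^{-1}+\delta_2^2[M_{\bb{\theta}_0}]_{22}^2[\Sigma_{\bb{\theta}_0,\mathcal{U}}]_{22}^{-1}$, as displayed.

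The remaining, purely computational, step evaluates these scalars from the standardized $\mathrm{EPD}$ moments $\EE|Y|^k=\lambda^{k/\lambda}\Gamma(1+(k+1)/\lambda)/\{(k+1)\Gamma(1+1/\lambda)\}$ and their log-weighted analogues $\EE\{|Y|^k\ln|Y|\}$, $\EE\{|Y|^k(\ln|Y|)^2\}$, obtained by differentiating the gamma integral in a power parameter and producing the $\psi$, $\psi_1$ terms behind $C_{1,\lambda},\dots,C_{4,\lambda}$; all of these are collected in Lemma~\ref{lem:1}. The $(1,1)$ block uses $h_1=-2|Y|^\lambda\mathrm{sign}(Y)$, the location score $s_{\mathcal{U},1}\propto|Y|^{\lambda-1}\mathrm{sign}(Y)$ and $g_{\mathcal{U},1}=X-\mu_0$, and yields $[M_{\bb{\theta}_0}]_{11}=4(1+\lambda)-4\lambda\Gamma(1+2/\lambda)/\Gamma^2(1+1/\lambda)$ together with the stated $[\Sigma_{\bb{\theta}_0,\mathcal{U}}]_{11}$; the $(2,2)$ block uses $h_2$, the scale score $s_{\mathcal{U},2}\propto|Y|^\lambda-1$ and $g_{\mathcal{U},2}\propto c_\lambda Y^2-1$. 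I expect the $(2,2)$ block to be the main obstacle: it requires the genuinely log-weighted moments (including mixed terms such as $\EE\{|Y|^{\lambda+2}\ln|Y|\}$ and $\EE\{|Y|^\lambda(\ln|Y|)^2\}$) and a somewhat delicate algebraic condensation of $\nu^2\{H-GR^{-1}J^\top-JR^{-1}G^\top+GR^{-1}G^\top\}$ into the closed form involving $C_{1,\lambda},\dots,C_{4,\lambda}$ and the fourth-moment factor $\Gamma(1+5/\lambda)$ hidden in $C_{3,\lambda}$. Verifying that both diagonal entries of $\Sigma_{\bb{\theta}_0,\mathcal{U}}$ are strictly positive for all $\lambda>1$, so that $\Sigma_{\bb{\theta}_0,\mathcal{U}}^{-1}$ exists as required, is a minor additional point handled along the way.
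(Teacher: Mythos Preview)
Your proposal is correct and mirrors the paper's proof almost exactly: verify \ref{ass:1}--\ref{ass:4} (the first three by reference to the $\mathrm{EPD}_\lambda$ analysis in Proposition~\ref{prp:1}/\citet{MR4547729}, the fourth via the delta-method influence function $\bb{g}_{\mathcal{U}}$ you wrote down), exploit $\bb{h}=\bb{s}_{\mathcal{K}}$ to reduce $H,G_{\mathcal{K}},G_{\mathcal{U}},J_{\mathcal{U}},S_{\mathcal{K},\mathcal{U}}$ to Fisher-information blocks, use symmetry of the $\mathrm{EPD}$ to make everything diagonal, and finish with the moment/log-moment identities. Two small notes: the moment formulas you invoke are in Lemma~\ref{lem:2}, not Lemma~\ref{lem:1}; and the paper avoids computing $\EE\{|Y|^{2\lambda}(\ln|Y|)^2\}$ for the $(2,2)$ entry of $H_{\bb{\theta}_0}$ by simply quoting $I_{\bb{\theta}_0,\mathcal{K}}$ from \eqref{eq:info.matrices}, which was established in \citet{MR4547729}.
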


\begin{figure}[b!]
\centering
\includegraphics[width=0.49\textwidth]{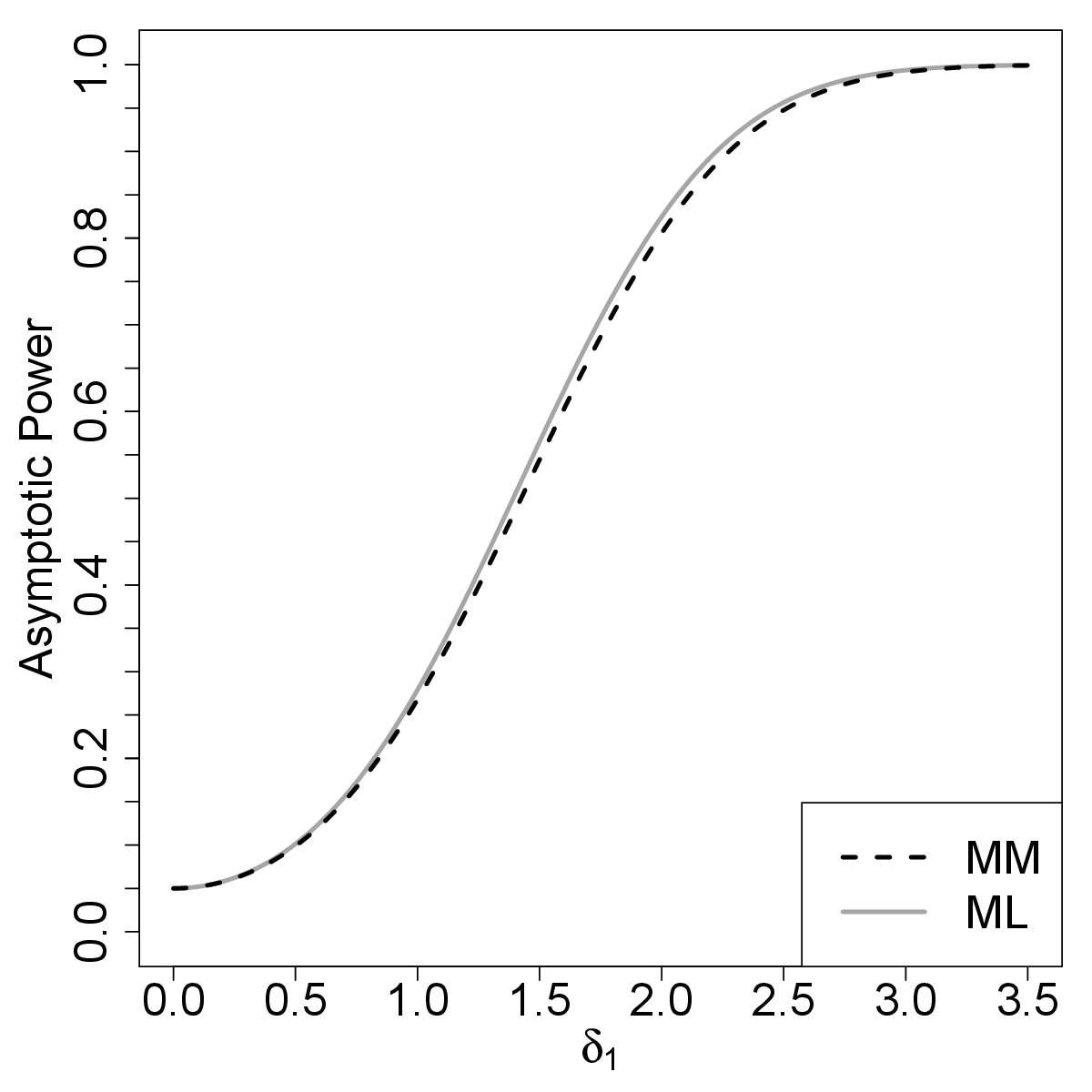}
\includegraphics[width=0.49\textwidth]{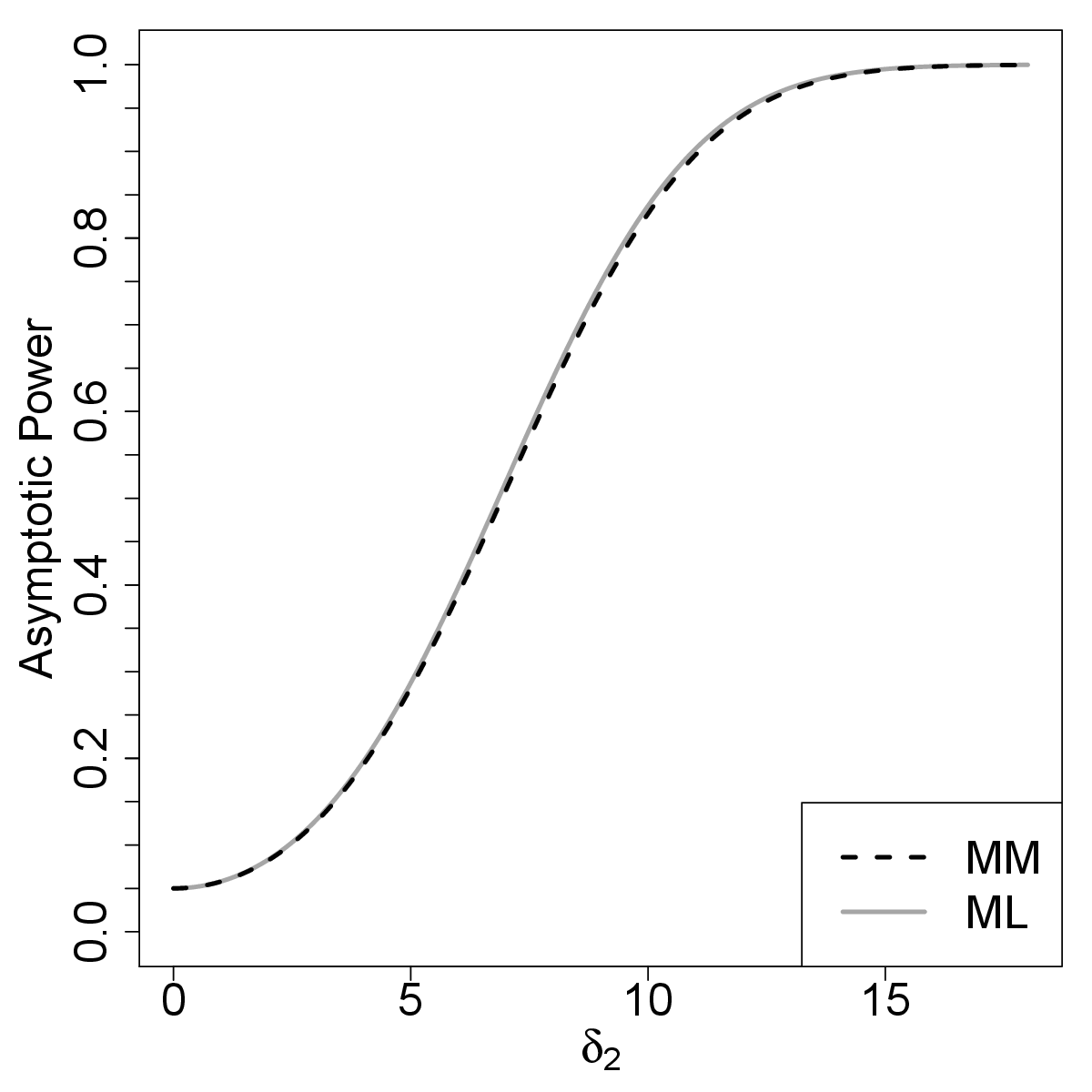}
\caption{Asymptotic power of the modified score goodness-of-fit tests for the exponential power distribution under local alternatives, using both maximum likelihood (ML) and the method of moments (MM) to estimate the unknown parameters $\mu_0$ and $\sigma_0$, whose specific values are irrelevant due to the location and scale invariance of the test statistic. The curves are displayed as a function of $\delta_1$ for a fixed $\delta_2 = 0$ (left) and as a function of $\delta_2$ for a fixed $\delta_1=0$ (right). The auxiliary parameter value is $\lambda=1.5$ and the nominal significance level is $0.05$.}\label{fig:1}
\end{figure}

\medskip
In Fig.~\ref{fig:1}, asymptotic power curves of the modified score test statistic $\smash{n \, \bb{U}_n(\hat{\bb{\theta}}_n)^{\top}\Sigma_{\bb{\theta}_0, \mathcal{U}}^{-1} \bb{U}_n(\hat{\bb{\theta}}_n)}$ for the exponential power distribution are shown under local alternatives for auxiliary parameter value $\lambda = 1.5$ and nominal significance level $\alpha = 5\%$. Given Propositions~\ref{prp:1}~and~\ref{prp:2}, the asymptotic critical value of the test is $\chi_{2,0.05}^2 \approx 5.991$ in each case.

In the left panel, the asymptotic power curves are compared under the local alternatives $\mathcal{H}_{1,n}(\delta_1, 0)$ for $\delta_1\in [0,3.5]$, and using maximum likelihood versus the method of moments to estimate the unknown parameters $\mu_0$ and $\sigma_0$, whose specific values are irrelevant due to the location and scale invariance of the test statistic. The power reaches its minimum at $\delta_1 = 0$, where it is equal to the significance level $0.05$, and gradually increases from 0.05 to 1 as $\delta_1$ moves away from 0, as expected.

In the right panel, the asymptotic power curves are compared under the local alternatives $\mathcal{H}_{1,n}(0, \delta_2)$ for $\delta_2\in [0,18]$, with similar results. The \textsf{R} code needed to reproduce this example and the graphs in Fig.~\ref{fig:1} is provided in the Supplement.

In Fig.~\ref{fig:2}, empirical power curves based on 10,000 Monte Carlo replications and various sample sizes are presented under the same setting as in Fig.~\ref{fig:1}, using maximum likelihood to estimate the unknown parameters $\mu_0$ and $\sigma_0$. The graphs based on the method of moments are omitted due to their similarity to the results shown.

Both the left and right panels demonstrate that the empirical curves converge to their asymptotic counterpart ($n = \infty$), regardless of whether $\delta_1$ or $\delta_2$ deviates from 0. However, in this example, the convergence is much faster when $\delta_1$ deviates from 0 (left panel) compared to when $\delta_2$ does (right panel). Not only are the sample sizes larger in the right panel, but they grow at twice the rate of those in the left panel. Nevertheless, the reader should keep in mind that these comments are specific to this example.

\begin{figure}[t!]
\centering
\includegraphics[width=0.49\textwidth]{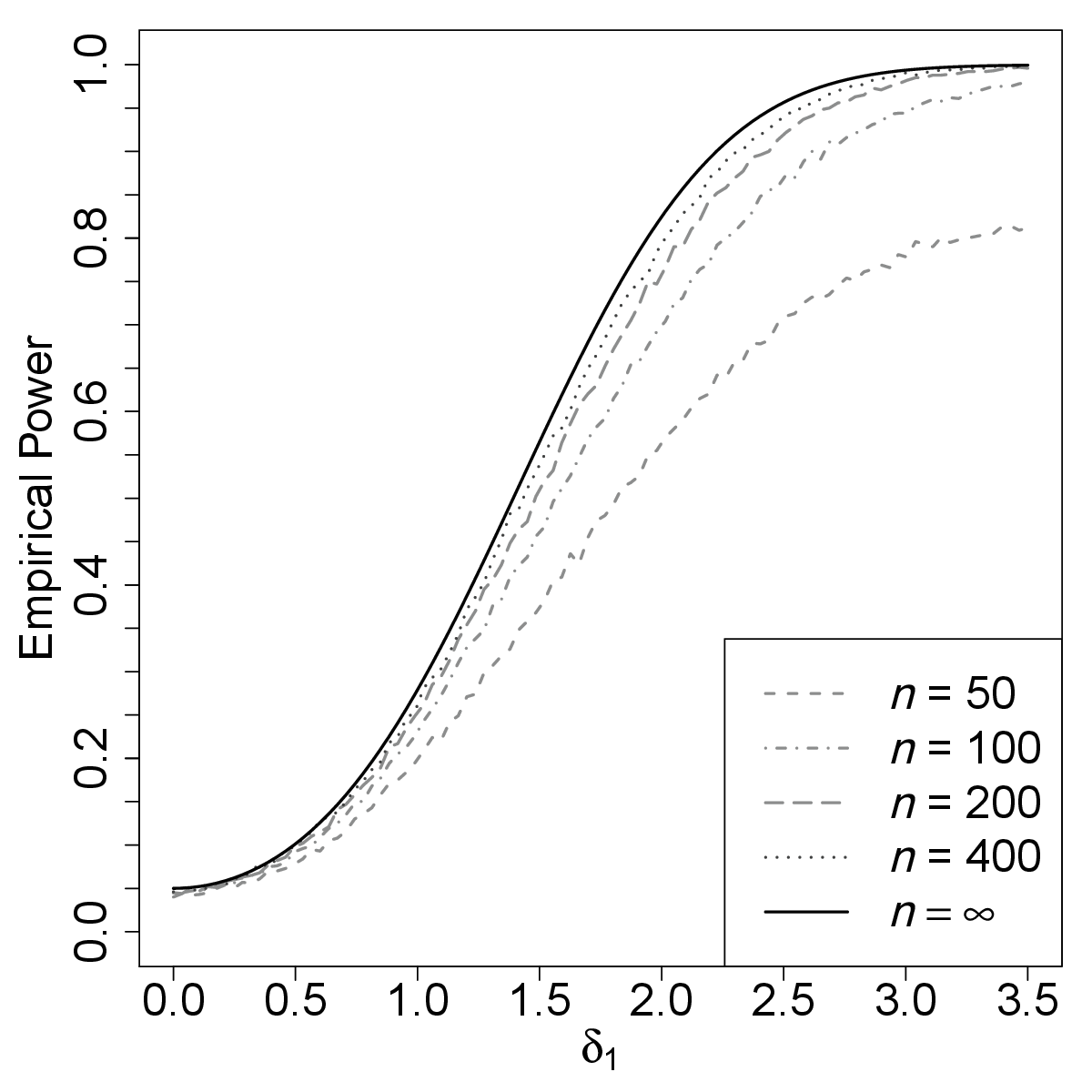}
\includegraphics[width=0.49\textwidth]{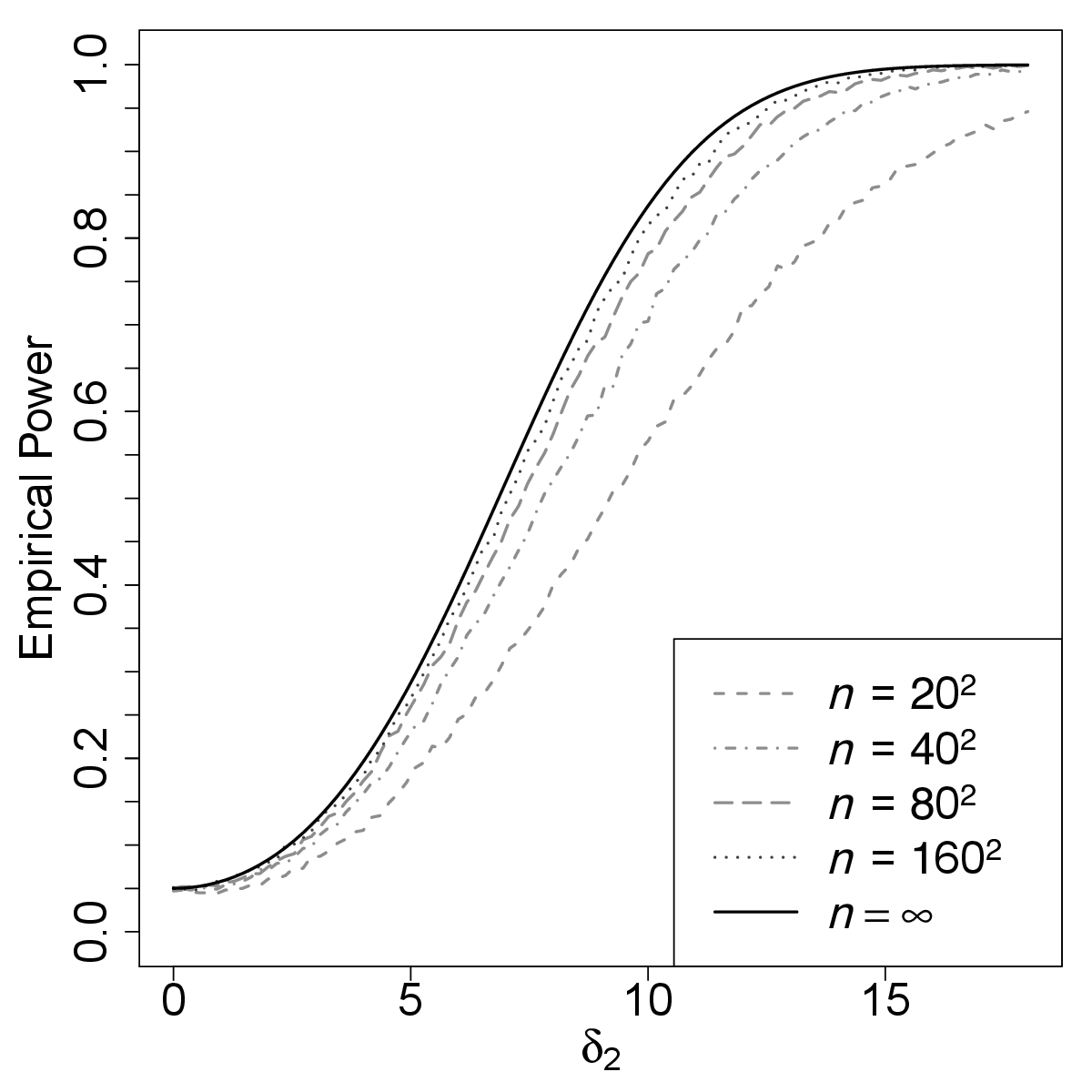}
\caption{Empirical power of the modified score goodness-of-fit tests for the exponential power distribution under local alternatives, using maximum likelihood to estimate the unknown parameters $\mu_0$ and $\sigma_0$. The curves are displayed as a function of $\delta_1$ for a fixed $\delta_2 = 0$ (left) and as a function of $\delta_2$ for a fixed $\delta_1=0$ (right). The auxiliary parameter value is $\lambda = 1.5$ and the nominal significance level is $0.05$. The case $n = \infty$ corresponds to the asymptotic power curve presented in Fig.~\ref{fig:1}.}\label{fig:2}
\end{figure}

\subsection{Example~2: Goodness-of-fit test for the normal distribution}
\label{sec:example.2}

Define the skew-normal density function, for all $\mu \in \R$, $\sigma^2 \in (0, \infty)$, $\alpha\in \R$, and $x\in \R$, by
\[
f(x \nvert \mu, \sigma^2, \alpha) = \frac{2}{\sigma} \, \phi \left( \frac{x - \mu}{\sigma} \right) \Phi \left( \alpha \, \frac{x - \mu}{\sigma} \right),
\]
where $\phi$ and $\Phi$ denote the density and cumulative distribution function of the standard normal distribution, respectively. This distribution was originally introduced by \citet{MR808153} and is also discussed in Chapter~12 of \citet{MR1299979}. If a random variable $X$ has this distribution, one writes $X\sim \mathrm{SN}(\mu,\sigma^2,\alpha)$ for short.

Given a continuous random sample $X_1, \dots, X_n$, consider testing
\[
\mathcal{H}_0 : X_1, \ldots, X_n \sim \mathrm{SN}(\mu_0,\sigma_0^2,0) \equiv \mathcal{N}(\mu_0,\sigma_0^2)
\]
versus
\[
\mathcal{H}_1 : X_1, \ldots, X_n \centernot{\sim} \mathrm{SN}(\mu_0,\sigma_0^2,0) ~\text{and the true distribution is asymmetric},
\]
with $\bb{\theta}_0 = (\mu_0, \sigma_0^2, \alpha_0)^{\top} \equiv (\mu_0, \sigma_0^2, 0)^{\top}$. The $U$-statistic considered in this section has degree 2. It is asymptotically equivalent to the Wilcoxon signed-rank statistic and  was studied, e.g., by \citet{MR1890341,MR2146496} and \citet{MR4103316} in the context of symmetry tests. It is defined, for all $\bb{\theta} = (\mu, \sigma^2, \alpha)^{\top}\in \R \times (0, \infty) \times \R$, by
\begin{equation}
\label{eq:example.2.U.stat}
U_n(\bb{\theta}) = \frac{2}{n(n-1)} \sum_{1\leq i < j\leq n} h(X_i, X_j \nvert \bb{\theta}),
\end{equation}
where the kernel $h$ is defined, for all $x_1, x_2\in \R$, by
\[
h(x_1, x_2 \nvert \bb{\theta}) = \ind_{\{x_1 + x_2 \geq 2 \mu\}} - 1/2.
\]

The parameter $\alpha_0 = 0$ is known in $\mathcal{H}_0$, and the parameters $\mu_0$ and $\sigma_0^2$ are assumed to be unknown, so one sets
\[
\bb{\theta} = (\mu,\sigma^2, \alpha)^{\top}, \quad \bb{\theta}_{\mathcal{K}} = \alpha, \quad \bb{\theta}_{\mathcal{U}} = (\mu, \sigma^2)^{\top}, \quad \bb{\theta}_0 = (\mu_0, \sigma_0^2, 0)^{\top}, \quad \bb{\theta}_{0,\mathcal{K}} = \alpha_0 = 0, \quad \bb{\theta}_{0,\mathcal{U}} = (\mu_0, \sigma_0^2)^{\top},
\]
with $\mathcal{K} = \{3\}$, $\mathcal{U} = \mathcal{K}^{\complement} = \{1,2\}$, $p_{\mathcal{K}} = 1$, and $p_{\mathcal{U}} = 2$.

To estimate the unknown parameters, the following maximum likelihood estimator is adopted:
\begin{equation}
\label{eq:example.2.MLE}
\hat{\bb{\theta}}_{n,\mathcal{U}} = (\hat{\mu}_n, \hat{\sigma}_n^2)^{\top}, \quad \hat{\mu}_n = \frac{1}{n} \sum_{i=1}^n x_i, \quad \hat{\sigma}_n^2 = \frac{1}{n} \sum_{i=1}^n (X_i - \hat{\mu}_n)^2.
\end{equation}

Given that the null distribution $\mathrm{SN}(\mu_0,\sigma_0^2,0)$ belongs to the larger parametric family $\{\mathrm{SN}(\mu,\sigma^2,\alpha) : \mu \in \R,$ $\sigma^2 \in (0,\infty), \alpha \in \R\}$, and $\bb{\theta}_{0,\mathcal{K}} = \alpha_0 = 0$ is assumed to be known under $\mathcal{H}_0$, the local alternatives are taken to be
\[
\mathcal{H}_{1,n}(\delta) : X_1, \ldots, X_n \sim \mathrm{SN}(\bb{\theta}_n), \quad \bb{\theta}_n = \left[\mu_0, \sigma_0^2, 0 + \frac{\delta}{\sqrt{n}} \{1 + o(1)\}\right]^{\top},
\]
where the real $\delta \equiv \delta_{\mathcal{K}}\in \R \backslash \{0\}$ is known and fixed.

The following proposition is a consequence of Theorems~\ref{thm:1}~and~\ref{thm:2}; see Fig.~\ref{fig:3} for an illustration.

\begin{figure}[!b]
\centering
\includegraphics[width=0.95\textwidth]{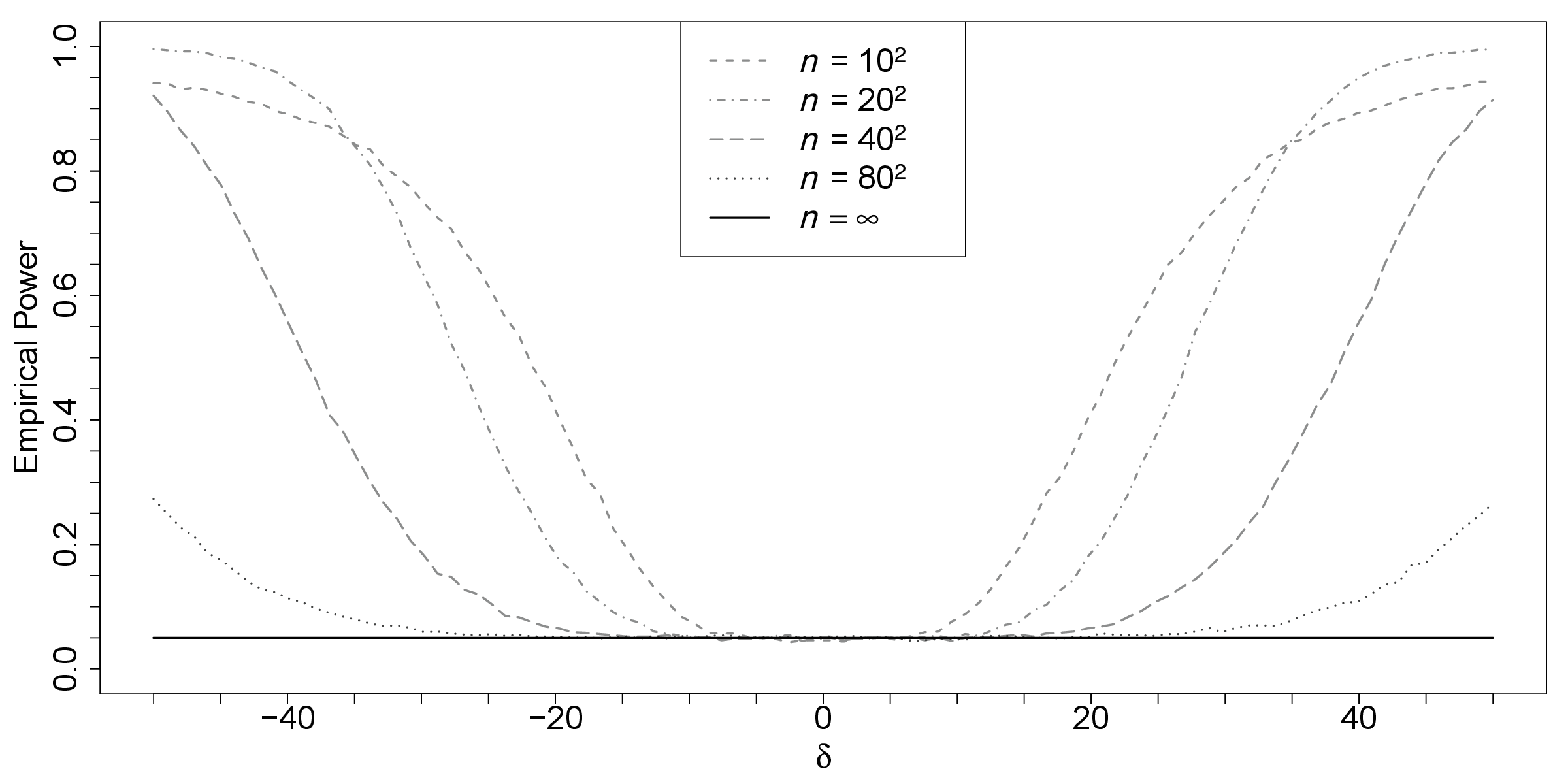}
\caption{Empirical power of $n \, U_n^2 (\hat{\bb{\theta}}_n) / (1/3 - 1/\pi)$ under local alternatives, using maximum likelihood to estimate the unknown parameter $\smash{(\mu_0, \sigma_0^2)^{\top}}$. The curves are displayed as a function of $\delta$. The nominal significance level is $0.05$. The case $n = \infty$ corresponds to the asymptotic power curve.}\label{fig:3}
\end{figure}

\begin{prp}\label{prp:3}
Let $\hat{\bb{\theta}}_n = (\hat{\mu}_n, \hat{\sigma}_n^2,0)^{\top}$, where $(\hat{\mu}_n, \hat{\sigma}_n^2)^{\top}$ is the maximum likelihood estimator in \eqref{eq:example.2.MLE}, and consider $\bb{U}_n(\hat{\bb{\theta}}_n)$ using the $U$-statistic defined in \eqref{eq:example.2.U.stat}. Also, let $\Sigma_{\bb{\theta}_0,\mathcal{U}} = 1/3 - 1/\pi$ and $M_{\bb{\theta}_0} = 0$. Under both $\mathcal{H}_0$ and $\mathcal{H}_{1,n}(\delta)$, one has, as $n \to \infty$,
\[
\frac{n \, U_n^2(\hat{\bb{\theta}}_n)}{1/3 - 1/\pi} \rightsquigarrow \chi_1^2.
\]
\end{prp}

Fig.~\ref{fig:3} supports Proposition~\ref{prp:3} by showing that for a fixed $\delta$ significantly smaller than $\sqrt{n}$, the empirical power curves align with the theoretical power curve, which remain constant at the nominal significance level of $0.05$, because the asymptotic distribution of $n \, U_n^2 (\hat{\bb{\theta}}_n) / (1/3 - 1/\pi)$ under the local alternatives $\mathcal{H}_{1,n}(\delta)$ is the (central) $\chi_1^2$ distribution. The empirical power curves diverge when $\delta = \delta' \sqrt{n}$ for some small positive $\delta'$. In contrast with Example~1, the asymptotic distribution of the normalized $U$-statistic is unaffected by perturbations of the known parameter (here, the asymmetry parameter $\alpha$ of the skew-normal distribution) in the order of $1/\sqrt{n}$. However, it is certainly influenced by perturbations of order $1$, as expected.

\section{Discussion}
\label{sec:6}

The results presented in this paper provide a comprehensive understanding of the asymptotic behavior of non-degenerate multivariate $U$-statistics with estimated nuisance parameters under both the null and local alternative hypotheses. However, several intriguing directions remain open for future research.

First, it would be a significant advancement to extend the results to $U$-statistics in the degenerate case, which are characterized by their Hoeffding decomposition having zero variance in the first term. The Hoeffding decomposition expresses a $U$-statistic as a sum of uncorrelated components, each representing projections onto different subspaces of symmetric functions. These projection terms are based on the degree of interaction they represent: the first term involves one variable, the second term involves two variables, and so on. Degenerate $U$-statistics are common in many applications, such as goodness-of-fit tests, tests for independence, symmetry tests, homogeneity tests, etc., where the test statistic's variance approaches zero as the sample size increases and its overall distributional behavior is controlled by higher-order interaction terms. As a result, standard asymptotic normality results become inapplicable. Instead, when properly normalized, the $U$-statistic converges to an infinite mixture of translated chi-square random variables, with the weights corresponding to the eigenvalues of the kernel's covariance operator \citep[pp.~79--80]{MR1075417}. Understanding these non-normal limiting distributions in this setting, for instance the impact of the estimated nuisance parameters on the eigenvalues, is crucial for accurate and reliable inference, and would provide more robust and applicable tools for statistical analysis in these common settings.

Second, considering $U$-processes seems like a promising avenue. Such processes generalize $U$-statistics by allowing the kernel to vary with an additional parameter, typically a time parameter. Analyzing $U$-processes in the presence of estimated nuisance parameters and under local alternatives could reveal new asymptotic properties and potential applications in statistical inference. For further reading on this topic (excluding estimated nuisance parameters), see, e.g., \cite{MR888439, MR942769, MR1235426, MR1239110, MR1348376, MR1256391, MR1272092, MR1666908, MR1311984, MR4716852, MR4562252, MR4366753, MR4674156, MR4695390}.

Third, it would be of interest to investigate the extension of the results herein to the settings of truncated, censored and/or missing data. Truncated data occur when observations falling outside a certain range are excluded from the analysis, such as income data where values below or above certain thresholds are not reported. Censored data are common in clinical studies where an event (like the onset of a disease) is known to have happened before or after a certain time but its occurrence time may be unknown. Missing data occur when some observations are not recorded or are lost, which frequently happens in survey responses. In many practical multivariate settings, it is common to face these issues, or a combination thereof, and to have variables that are unknown and need to be estimated to make inferences. Therefore, understanding the asymptotic behavior of $U$-statistics in this context is essential. For recent relevant publications in these areas (without estimated nuisance parameters), see, e.g., \cite{MR1276700,MR4366753,MR2779643,MR4623301,MR3911661,MR4619078}.

Finally, another avenue mentioned by a referee would be the study of Bahadur efficiency in this setting. Bahadur efficiency evaluates the performance of a statistical test based on the rate at which the $p$-value of the test statistic converges to zero under the alternative hypothesis and is rooted in large deviation theory; see, e.g., \citep[p.~203]{MR1652247}. Understanding Bahadur efficiency in the context of $U$-statistics with estimated nuisance parameters and under local alternatives would be a significant contribution to the theory of statistical inference. However, it is possible that a different toolset would be required to obtain results in this area, as it lies within the realm of large deviation theory, in contrast to the stochastic Taylor expansions, weak limit theorems, and contiguity methods used in the current work.

In conclusion, while this paper addresses the asymptotic behavior of multivariate $U$-statistics with estimated nuisance parameters, exploring these additional directions would greatly enrich the field and open up new possibilities for both theoretical advancements and practical applications. Everything mentioned above could also be reframed in the context of $V$-statistics, where most of the same questions remain open.

\section{Proofs of the main results}\label{sec:proofs}

\begin{proof}[\bf Proof of Theorem~\ref{thm:1}]
The first step of the proof is to expand $\sqrt{n} \, \bb{U}_n(\hat{\bb{\theta}}_n)$ using a stochastic Taylor expansion of order~$1$; see, e.g., Theorem~18.18~of~\citet{MR2378491}. Recalling that $\hat{\bb{\theta}}_{n,\mathcal{K}} = \bb{\theta}_{0,\mathcal{K}}$, there exists a random variable $Y$ taking values in $[0, 1]$ such that
\[
\bb{U}_n(\hat{\bb{\theta}}_n) = \bb{U}_n(\bb{\theta}_0) + \bb{U}_{n,\mathcal{U}}'\{\bb{\theta}_0 + Y (\hat{\bb{\theta}}_n - \bb{\theta}_0)\} (\hat{\bb{\theta}}_{n,\mathcal{U}} - \bb{\theta}_{0,\mathcal{U}}),
\]
where $\bb{U}_n'(\bb{\theta}) = \partial_{\bb{\theta}^{\top}}\bb{U}_n(\bb{\theta}) = \big( \partial_{\theta_1} \bb{U}_n(\bb{\theta}), \ldots, \partial_{\theta_p} \bb{U}_n(\bb{\theta})\big)$ and $\bb{U}_{n,\mathcal{U}}'(\bb{\theta}) = \partial_{\bb{\theta}_{\mathcal{U}}^{\top}} \bb{U}_n(\bb{\theta})$. By adding and subtracting both $\bb{U}_{n,\mathcal{U}}'(\bb{\theta}_0) (\hat{\bb{\theta}}_{n,\mathcal{U}} - \bb{\theta}_{0,\mathcal{U}})$ and $ \EE\{\bb{U}_{n,\mathcal{U}}'(\bb{\theta}_0)\} (\hat{\bb{\theta}}_{n,\mathcal{U}} - \bb{\theta}_{0,\mathcal{U}})$, one has
\begin{align*}
\bb{U}_n(\hat{\bb{\theta}}_n)
= \bb{U}_n(\bb{\theta}_0)
&+ \bb{U}_{n,\mathcal{U}}'(\bb{\theta}_0) (\hat{\bb{\theta}}_{n,\mathcal{U}} - \bb{\theta}_{0,\mathcal{U}}) \\
&+ \left[\bb{U}_{n,\mathcal{U}}'\{\bb{\theta}_0 + Y (\hat{\bb{\theta}}_n - \bb{\theta}_0)\} - \EE\{\bb{U}_{n,\mathcal{U}}'(\bb{\theta}_0)\}\right] (\hat{\bb{\theta}}_{n,\mathcal{U}} - \bb{\theta}_{0,\mathcal{U}}) - \left[\bb{U}_{n,\mathcal{U}}'(\bb{\theta}_0) - \EE\{\bb{U}_{n,\mathcal{U}}'(\bb{\theta}_0)\}\right] (\hat{\bb{\theta}}_{n,\mathcal{U}} - \bb{\theta}_{0,\mathcal{U}}).
\end{align*}
Therefore,
\begin{equation}\label{eq:bound.after.Taylor}
\left|\bb{U}_n(\hat{\bb{\theta}}_n) - \left\{\bb{U}_n(\bb{\theta}_0) + \bb{U}_{n,\mathcal{U}}'(\bb{\theta}_0) (\hat{\bb{\theta}}_{n,\mathcal{U}} - \bb{\theta}_{0,\mathcal{U}})\right\}\right| \\
\leq 2 \sup_{y\in [0, 1]} \left|\bb{U}_{n,\mathcal{U}}'\{\bb{\theta}_0 + y (\hat{\bb{\theta}}_n - \bb{\theta}_0)\} - \EE\{\bb{U}_{n,\mathcal{U}}'(\bb{\theta}_0)\}\right| (\hat{\bb{\theta}}_{n,\mathcal{U}} - \bb{\theta}_{0,\mathcal{U}}).
\end{equation}

Under items $(a)$ and $(b)$ of Assumption~\ref{ass:3}, and using the fact that $\hat{\bb{\theta}}_n \stackrel{\PP_{\!\mathcal{H}_0}}{\longrightarrow} \bb{\theta}_0$ by Assumption~\ref{ass:4}, an application of the uniform law of large numbers in Lemma~\ref{lem:1} of \ref{app:A} to each of the $d \times p_{\mathcal{U}}$ components of $\bb{U}_{n,\mathcal{U}}'$ in \eqref{eq:bound.after.Taylor} with the choice
\[
g(\bb{x}_1, \ldots, \bb{x}_{\nu} \nvert \bb{\theta}) = \partial_{\theta_j} h_i(\bb{x}_1, \ldots, \bb{x}_{\nu} \nvert \bb{\theta}), \quad (i, j)\in \{1, \ldots, d\} \times \mathcal{U},
\]
proves that the right-hand side of \eqref{eq:bound.after.Taylor} is $o_{\hspace{0.3mm}\PP_{\!\mathcal{H}_0}}(1) \bb{1}_d$. Thus, one can write, as $n \to \infty$,
\begin{equation}
\label{eq:expansion}
\sqrt{n} \, \bb{U}_n(\hat{\bb{\theta}}_n) = \sqrt{n} \, \bb{U}_n(\bb{\theta}_0) + \bb{U}_{n,\mathcal{U}}'(\bb{\theta}_0) \sqrt{n} \, (\hat{\bb{\theta}}_{n,\mathcal{U}} - \bb{\theta}_{0,\mathcal{U}}) + o_{\hspace{0.3mm}\PP_{\!\mathcal{H}_0}}(1) \bb{1}_d.
\end{equation}

The asymptotic expression for $\sqrt{n} \, (\hat{\bb{\theta}}_{n,\mathcal{U}} - \bb{\theta}_{0,\mathcal{U}})$ is known by Assumption~\ref{ass:4}. Hence, the second step consists in studying the asymptotics of the term $\bb{U}_{n,\mathcal{U}}'(\bb{\theta}_0)$. Under Assumption~\ref{ass:3}, one can swap derivative and integral in the following equation:
\begin{multline*}
\partial_{\bb{\theta}_{\mathcal{U}}^{\top}} \int_{\R^m} \cdots \int_{\R^m} \bb{h}(\bb{x}_1, \ldots, \bb{x}_{\nu} \nvert \bb{\theta}_0) \textstyle \prod_{\ell=1}^{\nu} f(\bb{x}_{\ell} \nvert \bb{\theta}_0) \rd \bb{x}_1 \cdots \rd \bb{x}_{\nu} \\
= \int_{\R^m} \cdots \int_{\R^m} \partial_{\bb{\theta}_{\mathcal{U}}^{\top}} \big\{\bb{h}(\bb{x}_1, \ldots, \bb{x}_{\nu} \nvert \bb{\theta}_0) \textstyle\prod_{\ell=1}^{\nu} f(\bb{x}_{\ell} \nvert \bb{\theta}_0)\big\} \rd \bb{x}_1 \cdots \rd \bb{x}_{\nu} \\
\hspace{3.5cm}= \int_{\R^m} \cdots \int_{\R^m} \big\{\partial_{\bb{\theta}_{\mathcal{U}}^{\top}}\bb{h}(\bb{x}_1, \ldots, \bb{x}_{\nu} \nvert \bb{\theta}_0)\big\} \textstyle\prod_{\ell=1}^{\nu} f(\bb{x}_{\ell} \nvert \bb{\theta}_0) \rd \bb{x}_1 \cdots \rd \bb{x}_{\nu} \\
+ \int_{\R^m} \cdots \int_{\R^m} \bb{h}(\bb{x}_1, \ldots, \bb{x}_{\nu} \nvert \bb{\theta}_0) \big\{\partial_{\bb{\theta}_{\mathcal{U}}} \textstyle\prod_{\ell=1}^{\nu} f(\bb{x}_{\ell} \nvert \bb{\theta}_0)\big\}^{\top} \rd \bb{x}_1 \cdots \rd \bb{x}_{\nu}.
\end{multline*}

Given that the left-hand side of the above equation is $\bb{0}_{d \times p_{\mathcal{U}}}$ by Assumption~\ref{ass:1}, one has
\[
\bb{0}_{d \times p_{\mathcal{U}}}
= \EE\big\{\partial_{\bb{\theta}_{\mathcal{U}}^{\top}}\bb{h}(\bb{X}_1, \ldots, \bb{X}_{\nu} \nvert \bb{\theta}_0)\big\} + \nu \EE\big[\bb{h}(\bb{X}_1, \ldots, \bb{X}_{\nu} \nvert \bb{\theta}_0) \partial_{\bb{\theta}_{\mathcal{U}}^{\top}} \ln \{ f(\bb{X} \nvert\bb{\theta}_0) \}\big]
\equiv \EE\big\{\partial_{\bb{\theta}_{\mathcal{U}}^{\top}}\bb{h}(\bb{X}_1, \ldots, \bb{X}_{\nu} \nvert \bb{\theta}_0)\big\} + \nu G_{\bb{\theta}_0,{\mathcal{U}}}.
\]
Therefore, using the weak law of large numbers, one deduces that
\begin{equation}\label{eq:U.derivative}
\bb{U}_{n,\mathcal{U}}'(\bb{\theta}_0)
= \binom{n}{\nu}^{-1} \sum_{(n, \nu)} \partial_{\bb{\theta}_{\mathcal{U}}^{\top}}\bb{h}(\bb{X}_{i_1}, \ldots, \bb{X}_{i_{\nu}} \nvert \bb{\theta}_0)
= - \nu G_{\bb{\theta}_0,{\mathcal{U}}} + o_{\hspace{0.3mm}\PP_{\!\mathcal{H}_0}}(1) \bb{1}_d^{\phantom{\top}}\hspace{-1mm}\bb{1}_{p_{\mathcal{U}}}^{\top}.
\end{equation}

As a final step, one applies the expansions found in \eqref{eq:U.derivative} and Assumption~\ref{ass:4} into \eqref{eq:expansion} to obtain
\begin{equation}
\label{eq:thm.1.final.step}
\sqrt{n} \, \bb{U}_n(\hat{\bb{\theta}}_n)
=
\begin{bmatrix}
\mathrm{Id}_{d\times d} \, ; \, - \nu G_{\bb{\theta}_0,\mathcal{U}}R_{\bb{\theta}_0,\mathcal{U}}^{-1}
\end{bmatrix}
\begin{bmatrix}
\sqrt{n} \, \bb{U}_n(\bb{\theta}_0) \\[1mm]
\frac{1}{\sqrt{n}} \sum_{i=1}^n \bb{r}_{\mathcal{U}}(\bb{X}_i \nvert \bb{\theta}_0)
\end{bmatrix}
+ o_{\hspace{0.3mm}\PP_{\!\mathcal{H}_0}}(1)\bb{1}_d.
\end{equation}

By the central limit theorem and Assumptions~\ref{ass:2}~and~\ref{ass:4}, one has $\sqrt{n} \, \bb{U}_n(\hat{\bb{\theta}}_n) \rightsquigarrow \mathcal{N}_d(\bb{0}_d, \Sigma_{\bb{\theta}_0,\mathcal{U}})$ under $\mathcal{H}_0$ as $n\to \infty$, with
\[
\begin{aligned}
\Sigma_{\bb{\theta}_0,\mathcal{U}}
&=
\begin{bmatrix}
\mathrm{Id}_{d\times d} \, ; \, - \nu G_{\bb{\theta}_0,\mathcal{U}}R_{\bb{\theta}_0,\mathcal{U}}^{-1}
\end{bmatrix}
\begin{bmatrix}
\nu^2 H_{\bb{\theta}_0} & \nu J_{\bb{\theta}_0,\mathcal{U}} \\[1mm]
\nu J_{\bb{\theta}_0,\mathcal{U}}^{\top} & R_{\bb{\theta}_0,\mathcal{U}}
\end{bmatrix}
\begin{bmatrix}
\mathrm{Id}_{d\times d} \\[1mm]
- \nu R_{\bb{\theta}_0,\mathcal{U}}^{-1} G_{\bb{\theta}_0,\mathcal{U}}^{\top}
\end{bmatrix} \\
&=
\nu^2 \big\{H_{\bb{\theta}_0} - G_{\bb{\theta}_0,\mathcal{U}} R_{\bb{\theta}_0,\mathcal{U}}^{-1} J_{\bb{\theta}_0,\mathcal{U}}^{\top} - J_{\bb{\theta}_0,\mathcal{U}} R_{\bb{\theta}_0,\mathcal{U}}^{-1} G_{\bb{\theta}_0,\mathcal{U}}^{\top} + G_{\bb{\theta}_0,\mathcal{U}} R_{\bb{\theta}_0,\mathcal{U}}^{-1} G_{\bb{\theta}_0,\mathcal{U}}^{\top}\big\},
\end{aligned}
\]
given that $ \EE\{\bb{h}(\bb{X}_{i_1}, \ldots, \bb{X}_{i_{\nu}} \nvert \bb{\theta}_0)\} = \bb{0}_d$ and $ \EE\{\bb{r}_{\mathcal{U}}(\bb{X}_i \nvert \bb{\theta}_0)\} = \bb{0}_{p_{\mathcal{U}}}$ by Assumptions~\ref{ass:1}~and~\ref{ass:4}. This proves the first assertion.

Under the assumption that the matrix function $\Sigma_{\bb{\theta},\mathcal{U}}$ is almost-everywhere continuous in $\bb{\theta}$, and given that $\hat{\bb{\theta}}_n$ converges in $\PP_{\!\mathcal{H}_0}$-probability to $\bb{\theta}_0$ by Assumption~\ref{ass:4}, it follows from the continuous mapping theorem that $\Sigma_{\hat{\bb{\theta}}_n,\mathcal{U}}$ converges in $\PP_{\!\mathcal{H}_0}$-probability to $\Sigma_{\bb{\theta}_0,\mathcal{U}}$ component-wise. The second assertion then follows from the first result and Slutsky's lemma.
\end{proof}

\begin{proof}[\bf Proof of Theorem~\ref{thm:2}]
As pointed in the paragraph preceding Lemma~7.6 of \citet{MR1652247}, item (c) of Assumption~\ref{ass:3} implies that the map $\bb{\theta}\mapsto f(\cdot \nvert \bb{\theta})$ is differentiable in quadratic mean at the interior point $\bb{\theta}_0\in \Theta\subseteq \R^p$. Under this assumption, and given that $\bb{\theta}_{n,\mathcal{U}} = \bb{\theta}_{0,\mathcal{U}}$ under $\mathcal{H}_{1,n}(\bb{\delta}_{\mathcal{K}})$, Theorem~7.2 of \citet{MR1652247} implies that, as $n \to \infty$,
\begin{equation}\label{eq:log.ratio}
\ln ( {\rd \PP_{\!\mathcal{H}_{1,n}}}/{\rd \PP_{\!\mathcal{H}_0}} ) = \frac{\bb{\delta}^{\top}_{\mathcal{K}}}{\sqrt{n}} \sum_{i=1}^n \bb{s}_{\mathcal{K}}(\bb{X}_i \nvert \bb{\theta}_0) - \frac{1}{2} \, \bb{\delta}^{\top}_{\mathcal{K}} I_{\bb{\theta}_0,\mathcal{K}} \bb{\delta}_{\mathcal{K}} + o_{\hspace{0.3mm}\PP_{\!\mathcal{H}_0}}(1).
\end{equation}

From this fact, it is straightforward to check that items (ii) and (iii) in Le Cam's first lemma (see, e.g., Lemma~6.4 of \citet{MR1652247}) are verified, which implies that the sequences $(\PP_{\!\mathcal{H}_0})_{n\in \N}$ and $(\PP_{\!\mathcal{H}_{1,n}})_{n\in \N}$ of probability measures are mutually contiguous. By (iv) of the same lemma, this mutual contiguity is equivalent to the property that, for any random vector $\bb{T}_{\!n} = \bb{T}_{\!n}(\bb{X}_1, \ldots, \bb{X}_n \nvert \bb{\theta}_0)$ taking values in $\R^d$,
\begin{equation}\label{eq:contiguity}
\bb{T}_{\!n} \xrightarrow{\PP_{\!\mathcal{H}_0}} 0 \quad \Longleftrightarrow \quad \bb{T}_{\!n} \xrightarrow{\PP_{\!\mathcal{H}_{1,n}}} 0,
\end{equation}
where in both cases, the convergence occurs as $n \to \infty$. Now, the first assertion of Theorem~\ref{thm:2} can be tackled. Using the above expansion in \eqref{eq:log.ratio}, the central limit theorem together with Assumptions~\ref{ass:2}~and~\ref{ass:4} yield, as $n \to \infty$,
\begin{align*}
\begin{bmatrix}
\sqrt{n} \, \bb{U}_n(\bb{\theta}_0) \\[1mm]
\frac{1}{\sqrt{n}} \sum_{i=1}^n \bb{r}_{\mathcal{U}}(\bb{X}_i \nvert \bb{\theta}_0) \\[1mm]
\ln ( {\rd \PP_{\!\mathcal{H}_{1,n}}}/{\rd \PP_{\!\mathcal{H}_0}} )
\end{bmatrix}
&=
\begin{bmatrix}
\bb{0}_d \\[1mm]
\bb{0}_{p_{\mathcal{U}}} \\[1mm]
-\frac{1}{2} \, \bb{\delta}^{\top}_{\mathcal{K}} I_{\bb{\theta}_0,\mathcal{K}} \bb{\delta}_{\mathcal{K}} + o_{\hspace{0.3mm}\PP_{\!\mathcal{H}_0}}(1)
\end{bmatrix}
+
\begin{bmatrix}
\sqrt{n} \, \bb{U}_n(\bb{\theta}_0) \\[1mm]
\frac{1}{\sqrt{n}} \sum_{i=1}^n \bb{r}_{\mathcal{U}}(\bb{X}_i \nvert \bb{\theta}_0) \\[1mm]
\frac{\bb{\delta}_{\mathcal{K}}^{\top}}{\sqrt{n}} \sum_{i=1}^n \bb{s}_{\mathcal{K}}(\bb{X}_i \nvert \bb{\theta}_0)
\end{bmatrix} \\
&\stackrel{\PP_{\!\mathcal{H}_0}}{\scalebox{2}[1.2]{$\rightsquigarrow$}}
\mathcal{N}_{d + p_{\mathcal{U}} + 1}\left(
\begin{bmatrix}
\bb{0}_d \\[1mm]
\bb{0}_{p_{\mathcal{U}}} \\[1mm]
-\frac{1}{2} \, \bb{\delta}^{\top}_{\mathcal{K}} I_{\bb{\theta}_0,\mathcal{K}} \bb{\delta}_{\mathcal{K}}
\end{bmatrix},
\begin{bmatrix}
\nu^2 H_{\bb{\theta}_0} & \nu J_{\bb{\theta}_0,\mathcal{U}} & \nu G_{\bb{\theta}_0,\mathcal{K}} \bb{\delta}_{\mathcal{K}} \\[1mm]
\nu J_{\bb{\theta}_0,\mathcal{U}}^{\top} & R_{\bb{\theta}_0,\mathcal{U}} & S_{\!\bb{\theta}_0,\mathcal{K}, \mathcal{U}}^{\top} \bb{\delta}_{\mathcal{K}} \\[1mm]
\nu \bb{\delta}_{\mathcal{K}}^{\top} G_{\bb{\theta}_0,\mathcal{K}}^{\top} & \bb{\delta}_{\mathcal{K}}^{\top} S_{\!\bb{\theta}_0,\mathcal{K}, \mathcal{U}} & \bb{\delta}^{\top}_{\mathcal{K}} I_{\bb{\theta}_0,\mathcal{K}} \bb{\delta}_{\mathcal{K}}
\end{bmatrix}
\right).
\end{align*}
Then, by Le Cam's third lemma (see, e.g., Lemma~6.7 of \citet{MR1652247}), one finds that, as $n \to \infty$,
\[
\begin{bmatrix}
\sqrt{n} \, \bb{U}_n(\bb{\theta}_0) \\[1mm]
\frac{1}{\sqrt{n}} \sum_{i=1}^n \bb{r}_{\mathcal{U}}(\bb{X}_i \nvert \bb{\theta}_0)
\end{bmatrix}
\stackrel{\PP_{\!\mathcal{H}_{1,n}}}{\scalebox{2}[1.2]{$\rightsquigarrow$}}
\mathcal{N}_{d + p_{\mathcal{U}}}\left(
\begin{bmatrix}
\nu G_{\bb{\theta}_0,\mathcal{K}} \bb{\delta}_{\mathcal{K}} \\[1mm]
S_{\!\bb{\theta}_0,\mathcal{K}, \mathcal{U}}^{\top}\bb{\delta}_{\mathcal{K}}
\end{bmatrix},
\begin{bmatrix}
\nu^2 H_{\bb{\theta}_0} & \nu J_{\bb{\theta}_0,\mathcal{U}} \\[1mm]
\nu J_{\bb{\theta}_0,\mathcal{U}}^{\top} & R_{\bb{\theta}_0,\mathcal{U}}
\end{bmatrix}
\right).
\]
By applying the contiguity result \eqref{eq:contiguity} to \eqref{eq:thm.1.final.step}, one deduces that, as $n \to \infty$,
\[
\sqrt{n} \, \bb{U}_n(\hat{\bb{\theta}}_n)
=
\begin{bmatrix}
\mathrm{Id}_{d\times d} \, ; \, - \nu G_{\bb{\theta}_0,\mathcal{U}} R_{\bb{\theta}_0,\mathcal{U}}^{-1}
\end{bmatrix}
\begin{bmatrix}
\sqrt{n} \, \bb{U}_n(\bb{\theta}_0) \\[1mm]
\frac{1}{\sqrt{n}} \sum_{i=1}^n \bb{r}_{\mathcal{U}}(\bb{X}_i \nvert \bb{\theta}_0)
\end{bmatrix}
+ o_{\hspace{0.3mm}\PP_{\!\mathcal{H}_{1,n}}}(1)\bb{1}_d
\stackrel{\PP_{\!\mathcal{H}_{1,n}}}{\scalebox{2}[1.2]{$\rightsquigarrow$}}
\mathcal{N}_d(M_{\bb{\theta}_0}\bb{\delta}_{\mathcal{K}}, \Sigma_{\bb{\theta}_0,\mathcal{U}}),
\]
where the matrix $\Sigma_{\bb{\theta}_0,\mathcal{U}}$ has the same expression as in Theorem~\ref{thm:1}, and
\[
M_{\bb{\theta}_0}\bb{\delta}_{\mathcal{K}}
=
\begin{bmatrix}
\mathrm{Id}_{d\times d} \, ; \, - \nu G_{\bb{\theta}_0,\mathcal{U}} R_{\bb{\theta}_0,\mathcal{U}}^{-1}
\end{bmatrix}
\begin{bmatrix}
\nu G_{\bb{\theta}_0,\mathcal{K}} \bb{\delta}_{\mathcal{K}} \\[1mm]
S_{\!\bb{\theta}_0,\mathcal{K}, \mathcal{U}}^{\top}\bb{\delta}_{\mathcal{K}}
\end{bmatrix}
=
\nu \big\{G_{\bb{\theta}_0,\mathcal{K}} - G_{\bb{\theta}_0,\mathcal{U}} R_{\bb{\theta}_0,\mathcal{U}}^{-1} S_{\!\bb{\theta}_0,\mathcal{K}, \mathcal{U}}^{\top}\big\} \bb{\delta}_{\mathcal{K}}.
\]
This proves the first assertion.

Under the assumption that the matrix function $\Sigma_{\bb{\theta},\mathcal{U}}$ is almost-everywhere continuous in $\bb{\theta}$, and given that $\hat{\bb{\theta}}_n$ converges in $\PP_{\!\mathcal{H}_{1,n}}$-probability to $\bb{\theta}_0$ by Assumption~\ref{ass:4} and the contiguity result in \eqref{eq:contiguity}, the continuous mapping theorem shows that $\Sigma_{\hat{\bb{\theta}}_n,\mathcal{U}}$ converges in $\PP_{\!\mathcal{H}_{1,n}}$-probability to $\Sigma_{\bb{\theta}_0,\mathcal{U}}$ component-wise. Then, the second assertion of the theorem follows from the first one in conjunction with Slutsky's theorem. This concludes the proof of Theorem~\ref{thm:2}.
\end{proof}

\begin{proof}[\bf Proof of Corollary~\ref{cor:1}]
Throughout this proof, if two sequences $a_n$ and $b_n$ of real numbers satisfy $a_n \leq b_n + \smash{o_{\hspace{0.3mm}\PP_{\!\mathcal{H}_0}}(1)}$, then one writes $a_n \lesssim b_n$ for simplicity. Similarly, if two $p_{\mathcal{U}}\times p_{\mathcal{U}}$ sequences $A_n$ and $B_n$ of matrices with real entries satisfy $A_n = B_n + \smash{o_{\hspace{0.3mm}\PP_{\!\mathcal{H}_0}}(1)} \bb{1}_{p_{\mathcal{U}}} \bb{1}_{p_{\mathcal{U}}}^{\top}$, then one writes $A_n \approx B_n$.

The first step of the proof is to show that $S_{\!\bb{\theta}_0,\mathcal{U},\mathcal{U}} = \EE\big\{\bb{s}_{\mathcal{U}}(\bb{X} \nvert \bb{\theta}_0) \bb{r}_{\mathcal{U}}(\bb{X} \nvert \bb{\theta}_0)^{\top}\big\}\approx R_{\bb{\theta}_0,\mathcal{U}}$. Under the assumptions of the corollary, the maximum likelihood estimator $\bb{\theta}_{\smash{n,\mathcal{U}}}^{\star}$ is asymptotically unbiased and attains the Cram\'er--Rao lower bound in the limit. Consequently, one has, for all $\bb{z}\in \R^{p_{\mathcal{U}}}$,
\[
n \, \bb{z}^{\top} \Var(\bb{\theta}_{\smash{n,\mathcal{U}}}^{\star}) \bb{z} \lesssim n \, \bb{z}^{\top} \Var(T) \bb{z},
\]
for any asymptotically unbiased estimator $T$, including $\hat{\bb{\theta}}_{n,\mathcal{U}}$. Therefore, by adapting the argument of \citet[p.~317]{MR346957} to the present setting, if $C_n = \Cov(\bb{\theta}_{\smash{n,\mathcal{U}}}^{\star}, \hat{\bb{\theta}}_{n,\mathcal{U}} - \bb{\theta}_{\smash{n,\mathcal{U}}}^{\star})$ denotes the cross-covariance matrix, then one finds that, for any nonzero real vector $\bb{z}\in \R^{p_{\mathcal{U}}} \backslash \{\bb{0}_{p_{\mathcal{U}}}\}$ and any scalar $\lambda\in [0, - \bb{z}^{\top} (C_n + C_n^{\top}) \bb{z}/\{\bb{z}^{\top} \Var(\hat{\bb{\theta}}_{n,\mathcal{U}} - \bb{\theta}_{\smash{n,\mathcal{U}}}^{\star}) \bb{z}\}]$,
\[
\begin{aligned}
n \, \bb{z}^{\top} \Var(\bb{\theta}_{\smash{n,\mathcal{U}}}^{\star}) \bb{z}
&\lesssim n \, \bb{z}^{\top} \Var\{\bb{\theta}_{\smash{n,\mathcal{U}}}^{\star} + \lambda (\hat{\bb{\theta}}_{n,\mathcal{U}} - \bb{\theta}_{\smash{n,\mathcal{U}}}^{\star})\} \bb{z} \\
&= n \, \bb{z}^{\top} \Var(\bb{\theta}_{\smash{n,\mathcal{U}}}^{\star}) \bb{z} + \lambda n \, \bb{z}^{\top} (C_n + C_n^{\top}) \bb{z} + \lambda^2 n \, \bb{z}^{\top} \Var(\hat{\bb{\theta}}_{n,\mathcal{U}} - \bb{\theta}_{\smash{n,\mathcal{U}}}^{\star}) \bb{z}
\leq n \, \bb{z}^{\top} \Var(\bb{\theta}_{\smash{n,\mathcal{U}}}^{\star}) \bb{z}.
\end{aligned}
\]
This implies that $n C_n \approx n C_n^{\top} \approx 0_{p_{\mathcal{U}} \times p_{\mathcal{U}}}$, or equivalently using Assumption~\ref{ass:4},
\[
n \, \Cov(\bb{\theta}_{\smash{n,\mathcal{U}}}^{\star}, \hat{\bb{\theta}}_{n,\mathcal{U}})\approx
n \, \Cov(\hat{\bb{\theta}}_{n,\mathcal{U}},\bb{\theta}_{\smash{n,\mathcal{U}}}^{\star})
\approx n \, \Var(\bb{\theta}_{\smash{n,\mathcal{U}}}^{\star})
\approx I_{\bb{\theta}_0,\mathcal{U}}^{-1}.
\]
Moreover, given the expansions in Assumption~\ref{ass:4} for both $\sqrt{n}\,(\bb{\theta}_{\smash{n,\mathcal{U}}}^{\star} - \bb{\theta}_{0,\mathcal{U}})$ and $\sqrt{n}\,(\hat{\bb{\theta}}_{n,\mathcal{U}} - \bb{\theta}_{0,\mathcal{U}})$, one has
\begin{align*}
n \, \Cov(\bb{\theta}_{\smash{n,\mathcal{U}}}^{\star},\hat{\bb{\theta}}_{n,\mathcal{U}})
\approx I_{\bb{\theta}_0,\mathcal{U}}^{-1} \EE\{\bb{s}_{\mathcal{U}}(\bb{X} \nvert \bb{\theta}_0)\bb{r}_{\mathcal{U}}(\bb{X} \nvert \bb{\theta}_0)^{\top}\} R_{\bb{\theta}_0,\mathcal{U}}^{-1}
= I_{\bb{\theta}_0,\mathcal{U}}^{-1} S_{\bb{\theta}_0,\mathcal{U},\mathcal{U}} R_{\bb{\theta}_0,\mathcal{U}}^{-1}.
\end{align*}
Combining the last two equations, one can conclude that $S_{\!\bb{\theta}_0,\mathcal{U},\mathcal{U}} \approx S_{\!\bb{\theta}_0,\mathcal{U},\mathcal{U}}^{\top} \approx R_{\bb{\theta}_0,\mathcal{U}}$.

In a second step, one can now show that $\bb{\delta}_{\mathcal{U}}$ has no impact on the asymptotic distribution of $\sqrt{n} \, \bb{U}_n(\hat{\bb{\theta}}_n)$. If one reruns the proof of Theorem~\ref{thm:2} with $\bb{\delta}$, $\bb{s}$ and $I_{\bb{\theta}_0}$ instead of $\bb{\delta}_{\mathcal{K}}$, $\bb{s}_{\mathcal{K}}$ and $I_{\bb{\theta}_0,\mathcal{K}}$, respectively, then one finds that, as $n \to \infty$,
\[
\sqrt{n} \, \bb{U}_n(\hat{\bb{\theta}}_n) \stackrel{\PP_{\!\mathcal{H}_{1,n}}}{\scalebox{2}[1.2]{$\rightsquigarrow$}} \mathcal{N}_d(M^{\star}_{\bb{\theta}_0}\bb{\delta}, \Sigma_{\bb{\theta}_0,\mathcal{U}}),
\quad \text{with  }
M^{\star}_{\bb{\theta}_0} = \nu \big\{G_{\bb{\theta}_0} - G_{\bb{\theta}_0,\mathcal{U}} R_{\bb{\theta}_0,\mathcal{U}}^{-1} S_{\!\bb{\theta}_0,\mathcal{U}}^{\top}\big\},
\]
where $S_{\!\bb{\theta}_0,\mathcal{U}} = \EE\{\bb{s}(\bb{X} \nvert \bb{\theta})\bb{r}_{\mathcal{U}}(\bb{X} \nvert \bb{\theta})^{\top}\}$. Without loss of generality, assume that the first $p_{\mathcal{K}}$ indices correspond to the known components of $\bb{\theta}_0$ and the last $p_{\mathcal{U}}$ indices correspond to the unknown components of $\bb{\theta}_0$. One decomposes the $p_{\mathcal{U}}\times p$ matrix $S_{\!\bb{\theta}_0,\mathcal{U}}^{\top}$ into two parts, formed of the $p_{\mathcal{U}}\times p_{\mathcal{K}}$ matrix $S_{\!\bb{\theta}_0,\mathcal{K},\mathcal{U}}^{\top}$ and the $p_{\mathcal{U}}\times p_{\mathcal{U}}$ matrix $S_{\!\bb{\theta}_0,\mathcal{U},\mathcal{U}}^{\top} \approx R_{\bb{\theta}_0,\mathcal{U}}$, as $S_{\!\bb{\theta}_0,\mathcal{U}}^{\top} = [S_{\!\bb{\theta}_0,\mathcal{U},\mathcal{K}}^{\top} \, ; \, S_{\!\bb{\theta}_0,\mathcal{U},\mathcal{U}}^{\top}]$. Similarly, one writes $\bb{\delta}^{\top} = [\bb{\delta}_{\mathcal{K}}^{\top} \, ; \, \bb{\delta}_{\mathcal{U}}^{\top}]$ and $G_{\bb{\theta}_0} = [G_{\bb{\theta}_0,\mathcal{K}} \, ; \, G_{\bb{\theta}_0,\mathcal{U}}]$. Then one finds
\begin{align*}
M^{\star}_{\bb{\theta}_0}\bb{\delta}
&= \nu \big\{G_{\bb{\theta}_0} - G_{\bb{\theta}_0,\mathcal{U}} R_{\bb{\theta}_0,\mathcal{U}}^{-1} S_{\!\bb{\theta}_0,\mathcal{U}}^{\top}\big\} \bb{\delta}
\approx \nu \Big\{
\begin{bmatrix}
G_{\bb{\theta}_0,\mathcal{K}} \, ; \, G_{\bb{\theta}_0,\mathcal{U}}
\end{bmatrix}
-
G_{\bb{\theta}_0,\mathcal{U}} R_{\bb{\theta}_0,\mathcal{U}}^{-1}
\begin{bmatrix}
S_{\!\bb{\theta}_0,\mathcal{K},\mathcal{U}}^{\top} \, ; \, R_{\bb{\theta}_0,\mathcal{U}}
\end{bmatrix}
\Big\}
\begin{bmatrix}
\bb{\delta}_{\mathcal{K}} \\[1mm]
\bb{\delta}_{\mathcal{U}}
\end{bmatrix} \\[-2mm]
&= \nu
\begin{bmatrix}
G_{\bb{\theta}_0,\mathcal{K}} - G_{\bb{\theta}_0,\mathcal{U}} R_{\bb{\theta}_0,\mathcal{U}}^{-1} S_{\!\bb{\theta}_0,\mathcal{K},\mathcal{U}}^{\top} \, ; \, \bb{0}_{d\times p_{\mathcal{U}}}
\end{bmatrix}
\begin{bmatrix}
\bb{\delta}_{\mathcal{K}} \\[1mm]
\bb{\delta}_{\mathcal{U}}
\end{bmatrix}
= \nu \big\{G_{\bb{\theta}_0,\mathcal{K}} - G_{\bb{\theta}_0,\mathcal{U}} R_{\bb{\theta}_0,\mathcal{U}}^{-1} S_{\!\bb{\theta}_0,\mathcal{K},\mathcal{U}}^{\top}\big\} \bb{\delta}_{\mathcal{K}}
= M_{\bb{\theta}_0}\bb{\delta}_{\mathcal{K}},
\end{align*}
which only depends on $\bb{\delta}_{\mathcal{K}}$ and corresponds exactly to the expression in Theorem~\ref{thm:2}. Thus the proof is complete.
\end{proof}

\begin{proof}[\bf Proof of Proposition~\ref{prp:1}]
Assumptions~\ref{ass:1}~and~\ref{ass:3} are valid for $\lambda\in (0,\infty)$ and $\lambda\in (1,\infty)$, respectively, as shown by \citet{MR4547729}. Although it does not play a role here, Assumption~\ref{ass:3} was also shown to hold for the more difficult case $\lambda = 1$ (the Laplace distribution under $\mathcal{H}_0$) in that paper; recall Remark~\ref{rem:4}. Assumptions~\ref{ass:2} is automatically satisfied because the kernel function $\bb{h}$ has degree $\nu = 1$. For the maximum likelihood estimator $(\hat{\mu}_n^{\star}, \hat{\sigma}_n^{\star})^{\top}$, Assumption~\ref{ass:4} is satisfied by setting $\bb{r}_{\mathcal{U}}(x \nvert \bb{\theta}_0) = \bb{s}_{\mathcal{U}}(x \nvert \bb{\theta}_0)$ and $R_{\bb{\theta}_0,\mathcal{U}} = I_{\bb{\theta}_0,\mathcal{U}}$, as mentioned in Remark~\ref{rem:3}. Given that Assumptions~\ref{ass:1}--\ref{ass:4} hold, the results of Theorems~\ref{thm:1}~and~\ref{thm:2} apply.

It remains to compute the expression of the matrices $\Sigma_{\bb{\theta}_0,\mathcal{U}}$ and $M_{\bb{\theta}_0}$. Note that
\[
\begin{aligned}
H_{\bb{\theta}_0}
&= \EE\big\{\bb{h}(X \nvert \bb{\theta}_0) \bb{h}(X \nvert \bb{\theta}_0)^{\top}\big\}
= \EE\big\{\bb{s}_{\mathcal{K}}(X \nvert \bb{\theta}_0)\bb{s}_{\mathcal{K}}(X \nvert \bb{\theta}_0)^{\top}\big\}
= I_{\bb{\theta}_0,\mathcal{K}}, \\
G_{\bb{\theta}_0,\mathcal{U}}
&= \EE\big\{\bb{h}(X \nvert \bb{\theta}_0) \bb{s}_{\mathcal{U}}(X \nvert \bb{\theta}_0)^{\top}\big\}
= \EE\big\{\bb{s}_{\mathcal{K}}(X \nvert \bb{\theta}_0) \bb{s}_{\mathcal{U}}(X \nvert \bb{\theta}_0)^{\top}\big\}
= I_{\bb{\theta}_0,\mathcal{K},\mathcal{U}}, \\
G_{\bb{\theta}_0,\mathcal{K}}
&= \EE\big\{\bb{h}(X \nvert \bb{\theta}_0) \bb{s}_{\mathcal{K}}(X \nvert \bb{\theta}_0)^{\top}\big\}
= \EE\big\{\bb{s}_{\mathcal{K}}(X \nvert \bb{\theta}_0)\bb{s}_{\mathcal{K}}(X \nvert \bb{\theta}_0)^{\top}\big\}
= I_{\bb{\theta}_0,\mathcal{K}}
= H_{\bb{\theta}_0}.
\end{aligned}
\]
Letting $y = (x-\mu_0)/\sigma_0$, straightforward calculations in \texttt{Mathematica} yield
\begin{equation}\label{eq:vector.d}
\begin{aligned}
&\bb{s}_{\mathcal{K}}(x \nvert \bb{\theta}_0)
=
\begin{bmatrix}
\partial_{\theta_1} \ln \{ f_{\lambda}(x \nvert\bb{\theta}_0)\} \\[1mm]
\partial_{\theta_2} \ln \{ f_{\lambda}(x \nvert\bb{\theta}_0)\}
\end{bmatrix}
=
\begin{bmatrix}
-2 |y|^{\lambda} \mathrm{sign}(y) \\[1mm]
-\frac{1}{\lambda} \Big[|y|^{\lambda} \ln |y| - \frac{1}{\lambda} \left\{\ln (\lambda) + \psi(1+1/\lambda)\right\}\Big]
\end{bmatrix}, \\
&\bb{s}_{\mathcal{U}}(x \nvert \bb{\theta}_0)
=
\begin{bmatrix}
\partial_{\mu} \ln \{ f_{\lambda}(x \nvert\bb{\theta}_0)\} \\[1mm]
\partial_{\sigma} \ln \{f_{\lambda}(x \nvert\bb{\theta}_0)\}
\end{bmatrix}
= \frac{1}{\sigma_0}
\begin{bmatrix}
|y|^{\lambda - 1} \mathrm{sign}(y) \\[1mm]
|y|^{\lambda} - 1
\end{bmatrix}.
\end{aligned}
\end{equation}
Using the expressions in \eqref{eq:vector.d}, one also verifies with \texttt{Mathematica} that, for $X\sim \mathrm{EPD}_{\lambda}(\mu_0, \sigma_0)$,
\begin{equation}
\label{eq:info.matrices}
\begin{aligned}
&I_{\bb{\theta}_0,\mathcal{K}}
=
\begin{bmatrix}
4(1 + \lambda) & 0 \\[1mm]
0 & \lambda^{-3} (C_{1,\lambda}+C_{2,\lambda}^2)
\end{bmatrix}, \;
I_{\bb{\theta}_0,\mathcal{U}}
= \frac{1}{\sigma_0^2}
\begin{bmatrix}
\frac{\lambda^{1-2/\lambda}\Gamma(2-1/\lambda)}{\Gamma(1+1/\lambda)} & 0 \\[1mm]
0 & \lambda
\end{bmatrix}, \;
I_{\bb{\theta}_0,\mathcal{K},\mathcal{U}}
= \frac{1}{\sigma_0}
\begin{bmatrix}
-\frac{2\lambda^{1-1/\lambda}}{\Gamma(1+1/\lambda)} & 0 \\[1mm]
0 & -\lambda^{-1} C_{2,\lambda}
\end{bmatrix},
\end{aligned}
\end{equation}
where $C_{1,\lambda} = (1+1/\lambda)\psi_1(1+1/\lambda)-1$ and $C_{2,\lambda} = 1+ \ln (\lambda) + \psi (1+1/\lambda)$.

Given Remark~\ref{rem:3} and the above matrices, one obtains
\[
\Sigma_{\bb{\theta}_0,\mathcal{U}}
= H_{\bb{\theta}_0} - G_{\bb{\theta}_0,\mathcal{U}} I_{\smash{\bb{\theta}_0,\mathcal{U}}}^{-1} G_{\smash{\bb{\theta}_0,\mathcal{U}}}^{\top}
= I_{\bb{\theta}_0,\mathcal{K}} - I_{\bb{\theta}_0,\mathcal{K},\mathcal{U}} I_{\smash{\bb{\theta}_0,\mathcal{U}}}^{-1} I_{\bb{\theta}_0,\mathcal{K},\mathcal{U}}^{\top}
=
\begin{bmatrix}
4(1 + \lambda) - \frac{4\lambda}{\Gamma(2 - 1/\lambda) \Gamma(1+1/\lambda)} & 0 \\
0 & \lambda^{-3} C_{1,\lambda}
\end{bmatrix}.
\]
Similarly, according to Remark~\ref{rem:6} and the above matrices, one finds
\[
M_{\bb{\theta}_0}
= G_{\bb{\theta}_0,\mathcal{K}} - G_{\bb{\theta}_0,\mathcal{U}} I_{\bb{\theta}_0,\mathcal{U}}^{-1} I_{\!\bb{\theta}_0,\mathcal{K}, \mathcal{U}}^{\top}
= I_{\bb{\theta}_0,\mathcal{K}} - I_{\bb{\theta}_0,\mathcal{K},\mathcal{U}} I_{\bb{\theta}_0,\mathcal{U}}^{-1} I_{\!\bb{\theta}_0,\mathcal{K}, \mathcal{U}}^{\top}
= \Sigma_{\bb{\theta}_0,\mathcal{U}}.
\]
This concludes the proof.
\end{proof}

\begin{proof}[\bf Proof of Proposition~\ref{prp:2}]
As in the proof of Proposition~\ref{prp:1}, Assumptions~\ref{ass:1}--\ref{ass:3} hold for $\lambda\in (1,\infty)$. However, finding functions $\bb{r}_{\mathcal{U}}(x \nvert \bb{\theta}_0)$ and $R_{\bb{\theta}_0,\mathcal{U}}$ that satisfy Assumption~\ref{ass:4} for the method of moments estimator is not so obvious. Let $X_1,\ldots, X_n$ be a random sample from $\mathrm{EPD}_{\lambda}(\mu_0,\sigma_0)$. By applying a Taylor expansion, one has
\begin{equation*}
\sqrt{n}
\begin{bmatrix}
\hat{\mu}_n-\mu_0 \\[1mm]
\hat{\sigma}_n -\sigma_0
\end{bmatrix}
=
\frac{1}{\sqrt{n}}\sum_{i=1}^n \bb{g}_{\mathcal{U}}(X_i \nvert \bb{\theta}_0)+o_{\hspace{0.3mm}\PP_{\!H_0}}(1) \bb{1}_2, \quad
\bb{g}_{\mathcal{U}}(x \nvert \bb{\theta}_0)
=
\begin{bmatrix}
x - \mu_0 \\[1mm]
\frac{1}{2\sigma_0}\left\{\frac{3\Gamma(1+1/\lambda)}{\lambda^{2/\lambda}\Gamma(1+3/\lambda)}(x - \mu_0)^2 - \sigma_0^2\right\}
\end{bmatrix},
\end{equation*}
where $\EE\big\{\bb{g}_{\mathcal{U}}(X \nvert \bb{\theta}_0)\big\} = \bb{0}_2$ for $X \sim \mathrm{EPD}_{\lambda}(\mu_0,\sigma_0)$. Now, one obtains $\EE\{g_{\mathcal{U},1}(X \nvert \bb{\theta}_0)g_{\mathcal{U},2}(X \nvert \bb{\theta}_0)\} = 0$ because the map $y\mapsto g_{\mathcal{U},1}(\mu_0 + \sigma_0 y \nvert \bb{\theta}_0)g_{\mathcal{U},2}(\mu_0 + \sigma_0 y \nvert \bb{\theta}_0)$ is antisymmetric and the $\mathrm{EPD}_{\lambda}(0,1)$ density is symmetric. Also, by invoking Lemma~\ref{lem:2} in \ref{app:A}, one finds
\[
 \EE\{g_{\mathcal{U},1}(X \nvert \bb{\theta}_0)^2\}
= \EE\{(X -\mu_0)^2\}
= \frac{\lambda^{2/\lambda}\Gamma(1+3/\lambda)}{3\Gamma(1+1/\lambda)} \, \sigma_0^2,
\]
and
\[
\begin{aligned}
\EE\{g_{\mathcal{U},2}(X \nvert \bb{\theta}_0)^2\}
&=\frac{1}{4\sigma_0^2} \EE\left[\left\{\frac{3\Gamma(1+1/\lambda)}{\lambda^{2/\lambda}\Gamma(1+3/\lambda)}(X - \mu_0)^2 - \sigma_0^2\right\}^2\right] \\
&= \frac{9\Gamma^2(1+1/\lambda)}{\lambda^{4/\lambda}\Gamma^2(1+3/\lambda)4\sigma_0^2} \EE\{(X-\mu_0)^4\}
+\frac{\sigma_0^2}{4}-\frac{3\Gamma(1+1/\lambda)}{2\lambda^{2/\lambda}\Gamma(1+3/\lambda)} \EE\{(X-\mu_0)^2\} \\
&= \frac{9\Gamma^2(1+1/\lambda)}{\lambda^{4/\lambda}\Gamma^2(1+3/\lambda)4\sigma_0^2}
\frac{\lambda^{4/\lambda}\Gamma(1+5/\lambda)\sigma_0^4}{5\Gamma(1+1/\lambda)}+\frac{\sigma_0^2}{4}-\frac{\sigma_0^2}{2}
= \frac{9\Gamma(1+1/\lambda)\Gamma(1+5/\lambda)\sigma_0^2}{20\Gamma^2(1+3/\lambda)}-\frac{\sigma_0^2}{4} \\
&= \frac{9\Gamma(1+1/\lambda)\Gamma(1+5/\lambda)-\Gamma^2(1+3/\lambda)}{5\Gamma^2(1+3/\lambda)} \frac{\sigma_0^2}{4}
= \frac{1}{4C_{3,\lambda}} \, \sigma_0^2.
\end{aligned}
\]
One deduces that Assumption~\ref{ass:4} is satisfied with
\begin{equation}\label{eq:R.inv}
R_{\bb{\theta}_0,\mathcal{U}}^{-1}
= \EE\big\{\bb{g}_{\mathcal{U}}(X \nvert \bb{\theta}_0) \bb{g}_{\mathcal{U}}(X \nvert \bb{\theta}_0)^{\top}\big\}
= \sigma_0^2
\begin{bmatrix}
\frac{\lambda^{2/\lambda}\Gamma(1+3/\lambda)}{3\Gamma(1+1/\lambda)} & 0 \\[1mm]
0 & \frac{1}{4C_{3,\lambda}}
\end{bmatrix}
\end{equation}
and
\[
\bb{r}_{\mathcal{U}}(x \nvert \bb{\theta}_0)
= R_{\bb{\theta}_0,\mathcal{U}}\bb{g}_{\mathcal{U}}(x \nvert \bb{\theta}_0)=\frac{1}{\sigma_0}
\begin{bmatrix}
\frac{3\Gamma(1+1/\lambda)}{\lambda^{2/\lambda}\Gamma(1+3/\lambda)} \left(\frac{x - \mu_0}{\sigma_0}\right) \\[1mm]
2C_{3,\lambda}\left\{\frac{3\Gamma(1+1/\lambda)}{\lambda^{2/\lambda}\Gamma(1+3/\lambda)}\left(\frac{x - \mu_0}{\sigma_0}\right)^2 - 1\right\}
\end{bmatrix}.
\]
Given that Assumptions~\ref{ass:1}--\ref{ass:4} hold, the results of Theorems~\ref{thm:1}~and~\ref{thm:2} apply.

It remains to compute the expressions for the matrices $\Sigma_{\bb{\theta}_0,\mathcal{U}}$ and $M_{\bb{\theta}_0}$. One has $\EE\{s_{1,\mathcal{K}}(X \nvert \bb{\theta}_0)r_{2,\mathcal{U}}(X \nvert \bb{\theta}_0)\} = 0$ because the map $y\mapsto s_{1,\mathcal{K}}(\mu_0 + \sigma_0 y \nvert \bb{\theta}_0)r_{2,\mathcal{U}}(\mu_0 + \sigma_0 y \nvert \bb{\theta}_0)$ is antisymmetric and the $\mathrm{EPD}_{\lambda}(0,1)$ density is symmetric. Also, using Lemma~\ref{lem:2} in \ref{app:A} with $Y=(X-\mu_0)/\sigma_0$, one finds
\begin{align*}
\sigma_0 \, \EE\left\{s_{1,\mathcal{K}}(X \nvert \bb{\theta}_0)r_{1,\mathcal{U}}(X \nvert \bb{\theta}_0)\right\}
&= \EE\left\{-2 |Y|^{\lambda} \mathrm{sign}(Y)\frac{3\Gamma(1+1/\lambda)}{\lambda^{2/\lambda}\Gamma(1+3/\lambda)}Y\right\}
= \frac{-6 \Gamma(1+1/\lambda)}{\lambda^{2/\lambda}\Gamma(1+3/\lambda)} \, \EE\big(|Y|^{\lambda+1}\big) \\
&= \frac{-6 \Gamma(1+1/\lambda)}{\lambda^{2/\lambda}\Gamma(1+3/\lambda)}\frac{\lambda^{1/\lambda}\Gamma(1+2/\lambda)}{\Gamma(1 + 1/\lambda)}
= \frac{-6 \Gamma(1+2/\lambda)}{\lambda^{1/\lambda}\Gamma(1+3/\lambda)},
\end{align*}
and
\begin{align*}
&\sigma_0 \, \EE\left\{s_{2,\mathcal{K}}(X \nvert \bb{\theta}_0)r_{2,\mathcal{U}}(X \nvert \bb{\theta}_0)\right\} \\
&\quad= -\frac{1}{\lambda} \, 2C_{3,\lambda} \EE\left[ \Big[|Y|^{\lambda} \ln |Y| - \frac{1}{\lambda} \left\{\ln (\lambda) + \psi(1+1/\lambda)\right\}\Big]\left\{\frac{3\Gamma(1+1/\lambda)}{\lambda^{2/\lambda}\Gamma(1+3/\lambda)}Y^2 - 1\right\}\right] \\
&\quad= -\frac{1}{\lambda} \, 2C_{3,\lambda} \EE\left[|Y|^{\lambda} \ln |Y| \left\{\frac{3\Gamma(1+1/\lambda)}{\lambda^{2/\lambda}\Gamma(1+3/\lambda)}Y^2 - 1\right\}\right] \\
&\quad= -\frac{1}{\lambda} \, 2C_{3,\lambda}\left\{\frac{3\Gamma(1+1/\lambda)}{\lambda^{2/\lambda}\Gamma(1+3/\lambda)} \EE(|Y|^{2+\lambda} \ln |Y|)- \EE(|Y|^{\lambda} \ln |Y|)\right\} \\
&\quad= -\frac{1}{\lambda} \, 2C_{3,\lambda}\left[\frac{3\Gamma(1+1/\lambda)}{\lambda^{2/\lambda}\Gamma(1+3/\lambda)}
\frac{\lambda^{2/\lambda-1}\Gamma(1+3/\lambda)\{\ln (\lambda) + \psi(1+3/\lambda)\}}{\Gamma(1 + 1/\lambda)}
-\frac{\lambda^{-1} \, \Gamma(1+1/\lambda)\left\{\ln (\lambda) + \psi(1+1/\lambda)\right\}}{\Gamma(1 + 1/\lambda)}\right] \\
&\quad= -\frac{1}{\lambda}\, 2C_{3,\lambda}\left[3\lambda^{-1}\{\ln (\lambda) + \psi(1+3/\lambda)\}-\lambda^{-1}\{\ln (\lambda) + \psi(1+1/\lambda)\}\right] \\
&\quad= -2\lambda^{-2} \, C_{3,\lambda}\left\{2\ln (\lambda) + 3\psi(1+3/\lambda) - \psi(1+1/\lambda)\right\},
\end{align*}
so that
\[
\begin{aligned}
J_{\bb{\theta}_0,\mathcal{U}}
&= \EE\big\{\bb{h}(X \nvert \bb{\theta}_0) \bb{r}_{\mathcal{U}}(X \nvert \bb{\theta}_0)^{\top}\big\}
= \EE\big\{\bb{s}_{\mathcal{K}}(X \nvert \bb{\theta}_0)\bb{r}_{\mathcal{U}}(X \nvert \bb{\theta}_0)^{\top}\big\} \\
&=
\frac{1}{\sigma_0}
\begin{bmatrix}
\frac{-6 \Gamma(1+2/\lambda)}{\lambda^{1/\lambda}\Gamma(1+3/\lambda)} & 0 \\[1mm]
0 & -2\lambda^{-2}C_{3,\lambda} \left\{2\ln (\lambda) + 3\psi(1+3/\lambda) - \psi(1+1/\lambda)\right\}
\end{bmatrix}.
\end{aligned}
\]
Furthermore, using the expression for the expected information submatrices in \eqref{eq:info.matrices}, one has
\[
\begin{aligned}
H_{\bb{\theta}_0}
&= \EE\big\{\bb{h}(X \nvert \bb{\theta}_0) \bb{h}(X \nvert \bb{\theta}_0)^{\top}\big\}
= \EE\big\{\bb{s}_{\mathcal{K}}(X \nvert \bb{\theta}_0)\bb{s}_{\mathcal{K}}(X \nvert \bb{\theta}_0)^{\top}\big\}
= I_{\bb{\theta}_0,\mathcal{K}}
=
\begin{bmatrix}
4(1 + \lambda) & 0 \\[1mm]
0 & \lambda^{-3} (C_{1,\lambda}+C_{2,\lambda}^2)
\end{bmatrix}, \\
G_{\bb{\theta}_0,\mathcal{U}}
&= \EE\big\{\bb{h}(X \nvert \bb{\theta}_0) \bb{s}_{\mathcal{U}}(X \nvert \bb{\theta}_0)^{\top}\big\}
= \EE\big\{\bb{s}_{\mathcal{K}}(X \nvert \bb{\theta}_0) \bb{s}_{\mathcal{U}}(X \nvert \bb{\theta}_0)^{\top}\big\}
= I_{\bb{\theta}_0,\mathcal{K},\mathcal{U}}
= \frac{1}{\sigma_0}
\begin{bmatrix}
-\frac{2\lambda^{1-1/\lambda}}{\Gamma(1+1/\lambda)} & 0 \\[1mm]
0 & -\lambda^{-1} C_{2,\lambda}
\end{bmatrix},
\end{aligned}
\]
and given the expression for $R_{\bb{\theta}_0,\mathcal{U}}^{-1}$ in \eqref{eq:R.inv}, straightforward calculations yield
\begin{align*}
\Sigma_{\bb{\theta}_0,\mathcal{U}}
&= H_{\bb{\theta}_0} - G_{\bb{\theta}_0,\mathcal{U}} R_{\bb{\theta}_0,\mathcal{U}}^{-1} J_{\bb{\theta}_0,\mathcal{U}}^{\top}
- J_{\bb{\theta}_0,\mathcal{U}} R_{\bb{\theta}_0,\mathcal{U}}^{-1} G_{\bb{\theta}_0,\mathcal{U}}^{\top}
+ G_{\bb{\theta}_0,\mathcal{U}} R_{\bb{\theta}_0,\mathcal{U}}^{-1} G_{\bb{\theta}_0,\mathcal{U}}^{\top} \\
&=
\begin{bmatrix}
4(1 + \lambda) + \frac{4\lambda^2\Gamma(1+3/\lambda)}{3\Gamma^3(1+1/\lambda)}-\frac{8\lambda\Gamma(1+2/\lambda)}{\Gamma^2(1+1/\lambda)} & 0 \\[1mm]
0 & \lambda^{-3} \left(C_{1,\lambda}+2C_{2,\lambda}^2 + \frac{\lambda}{4} \, C_{2,\lambda}^2 C_{3,\lambda}^{-1} - C_{2,\lambda}C_{4,\lambda}\right)
\end{bmatrix}.
\end{align*}
Similarly, in view of the fact that
\[
\begin{aligned}
G_{\bb{\theta}_0,\mathcal{K}}
&= \EE\big\{\bb{h}(X \nvert \bb{\theta}_0) \bb{s}_{\mathcal{K}}(X \nvert \bb{\theta}_0)^{\top}\big\}
= \EE\big\{\bb{s}_{\mathcal{K}}(X \nvert \bb{\theta}_0)\bb{s}_{\mathcal{K}}(X \nvert \bb{\theta}_0)^{\top}\big\}
= I_{\bb{\theta}_0,\mathcal{K}}
= H_{\bb{\theta}_0}, \\
S_{\!\bb{\theta}_0,\mathcal{K}, \mathcal{U}}
&= \EE\big\{\bb{s}_{\mathcal{K}}(X \nvert \bb{\theta}_0) \bb{r}_{\mathcal{U}}(X \nvert \bb{\theta}_0)^{\top}\big\}
= J_{\bb{\theta}_0,\mathcal{U}},
\end{aligned}
\]
and given the above expressions for $G_{\bb{\theta}_0,\mathcal{U}}$, $H_{\bb{\theta}_0}$ and $J_{\bb{\theta}_0,\mathcal{U}}$, together with $R_{\bb{\theta}_0,\mathcal{U}}^{-1}$ in \eqref{eq:R.inv}, straightforward calculations yield
\[
M_{\bb{\theta}_0} = H_{\bb{\theta}_0} - G_{\bb{\theta}_0,\mathcal{U}} R_{\bb{\theta}_0,\mathcal{U}}^{-1} J_{\bb{\theta}_0,\mathcal{U}}^{\top}
=
\begin{bmatrix}
4(1 + \lambda)-\frac{4\lambda\Gamma(1+2/\lambda)}{\Gamma^2(1+1/\lambda)} & 0 \\
0 & \lambda^{-3} \left(C_{1,\lambda} + \frac{3}{2} \, C_{2,\lambda}^2- \frac{1}{2} \, C_{2,\lambda} C_{4,\lambda}\right)
\end{bmatrix}.
\]
This concludes the proof.
\end{proof}

\begin{proof}[\bf Proof of Proposition~\ref{prp:3}]
First, Assumption~\ref{ass:1} is satisfied because the kernel has mean zero if $X_1,X_2\sim \mathrm{SN}(\mu,\sigma^2,0) \equiv \mathcal{N}(\mu,\sigma^2)$, for all $\mu\in \R$ and $\sigma^2 \in (0,\infty)$. Second, for $X_2\sim \mathrm{SN}(\bb{\theta}_0) \equiv \mathcal{N}(\mu_0,\sigma_0^2)$, one has
\[
h^{(1)}(x \nvert \bb{\theta}_0) = \EE(\ind_{\{x + X_2 \geq 2 \mu_0\}} - 1/2) = \Phi\left(\frac{x - \mu_0}{\sigma_0}\right) - 1/2.
\]
Given that $\EE\{h^{(1)}(X \nvert \bb{\theta}_0)^2\}$ is positive and finite for $X\sim \mathrm{SN}(\bb{\theta}_0)$, the kernel $h$ is non-degenerate under $\mathcal{H}_0$, i.e., Assumption~\ref{ass:2} is satisfied. By contiguity \eqref{eq:contiguity} and Assumption~\ref{ass:2}, it can be assumed from hereon, without loss of generality, that the kernel has degree $1$ and is defined by
\[
h(x \nvert \bb{\theta}) = \nu \, h^{(1)}(x \nvert \bb{\theta}) = 2 \, \Phi\left(\frac{x - \mu}{\sigma}\right) - 1.
\]
Under this definition, Assumptions~\ref{ass:3}~(a)~and~(b) are trivial. To prove Assumption~\ref{ass:3}~(c), recall that $f(x \nvert \mu, \sigma^2, \alpha) = 2 \sigma^{-1} \phi\{(x-\mu)/\sigma\} \Phi\{\alpha (x-\mu)/\sigma\}$. Thus one has, for all $x \in \R$,
\[
\begin{aligned}
\frac{\partial}{\partial \mu} f(x \nvert \bb{\theta})
&= \frac{(x - \mu)}{\sigma^2} f(x \nvert \bb{\theta}) - \frac{2\alpha}{\sigma^2} \phi\left(\frac{x - \mu}{\sigma}\right) \phi\left(\alpha \, \frac{x - \mu}{\sigma}\right), \\
\frac{\partial}{\partial \sigma^2} f(x \nvert \bb{\theta})
&= \left\{ -\frac{1}{2 \sigma^2} + \frac{(x - \mu)^2}{2 \sigma^4} \right\} f(x \nvert \bb{\theta}) - \alpha \, \frac{(x - \mu)}{\sigma^4} \phi\left(\frac{x - \mu}{\sigma}\right) \phi\left(\alpha \, \frac{x - \mu}{\sigma}\right), \\
\frac{\partial}{\partial \alpha} f(x \nvert \bb{\theta})
&= 2 \frac{(x - \mu)}{\sigma^2} \phi\left(\frac{x - \mu}{\sigma}\right) \phi\left(\alpha \, \frac{x - \mu}{\sigma}\right).
\end{aligned}
\]

Given that the kernel $h$ is bounded by $2$, then Assumption~\ref{ass:3}~(c) is proved by taking
\[
L_1(x) = \max_{\bb{\theta}\in B_{\sigma_0^2/2}(\bb{\theta}_0)} \left|\frac{\partial}{\partial \mu} f(x \nvert \bb{\theta})\right|, \quad
L_2(x) = \max_{\bb{\theta}\in B_{\sigma_0^2/2}(\bb{\theta}_0)} \left|\frac{\partial}{\partial \sigma^2} f(x \nvert \bb{\theta})\right|, \quad
L_3(x) = \max_{\bb{\theta}\in B_{\sigma_0^2/2}(\bb{\theta}_0)} \left|\frac{\partial}{\partial \alpha} f(x \nvert \bb{\theta})\right|.
\]
Indeed, the set $B_{\sigma_0^2/2}(\bb{\theta}_0)$ is compact in $\Theta$, and the radius of the ball, $\sigma_0^2/2$, is chosen so that $\sigma^2$ always remains bounded away from $0$. Therefore, the functions $L_1$, $L_2$, and $L_3$ are integrable, because even if one takes the parameters in $B_{\sigma_0^2/2}(\bb{\theta}_0)$ which yield the worst tail behavior, they are still Gaussian-like (or better). To see this, note that all factors of the form $\phi\{\alpha (x_i - \mu)/\sigma\}$ and $\Phi\{\alpha (x_i - \mu)/\sigma\}$ in the partial derivatives are bounded uniformly, and the remaining factors $\phi\{(x - \mu)/\sigma\}$, multiplied by polynomials of any degree in $(x - \mu)/\sigma$, are still integrable given that all Gaussian moments exist. To sum up, Assumption~\ref{ass:3}~(c) is satisfied.

Finally, Assumption~\ref{ass:4} is well-known to be satisfied for the maximum likelihood estimator \eqref{eq:example.2.MLE} in this setting, with the score and expected information matrix for the unknown components being, at $\bb{\theta} = \bb{\theta}_0$, respectively equal to
\[
\bb{r}_{\mathcal{U}}(x \nvert \bb{\theta}_0) = \bb{s}_{\mathcal{U}}(x \nvert \bb{\theta}_0) =
\begin{bmatrix}
(x - \mu_0) / \sigma_0^2 \\[1mm]
-1/(2 \sigma_0^2) + (x - \mu_0)^2/(2 \sigma_0^4)
\end{bmatrix},
\]
and
\[
R_{\bb{\theta}_0,\mathcal{U}} = I_{\bb{\theta}_0,\mathcal{U}} =
\begin{bmatrix}
1/\sigma_0^2 & 0 \\
0 & 1/(2\sigma_0^4)
\end{bmatrix};
\]
see, e.g., \citet[p.121]{MR1699953}, and recall Remark~\ref{rem:3}.

Now that Assumptions~\ref{ass:1}--\ref{ass:4} have been verified, it remains to compute the expressions of $\Sigma_{\bb{\theta},\mathcal{U}}$ and $M_{\bb{\theta}_0}$ in the statements of Theorems~\ref{thm:1}~and~\ref{thm:2}. According to Remarks~\ref{rem:3}~and~\ref{rem:6}, in the maximum likelihood setting, the expressions for $\Sigma_{\bb{\theta}_0,\mathcal{U}}$ and $M_{\bb{\theta}_0}$ simplify to
\[
\Sigma_{\bb{\theta}_0,\mathcal{U}} = \nu^2 (H_{\bb{\theta}_0} - G_{\bb{\theta}_0,\mathcal{U}} I_{\smash{\bb{\theta}_0,\mathcal{U}}}^{-1} G_{\smash{\bb{\theta}_0,\mathcal{U}}}^{\top}), \quad
M_{\bb{\theta}_0}
= \nu \, \big(G_{\bb{\theta}_0,\mathcal{K}} - G_{\bb{\theta}_0,\mathcal{U}} I_{\bb{\theta}_0,\mathcal{U}}^{-1} I_{\!\bb{\theta}_0,\mathcal{K}, \mathcal{U}}^{\top}\big),
\]
where recall that $G_{\bb{\theta}} = \EE\big\{h^{(1)}(X \nvert \bb{\theta}) \bb{s}(X \nvert \bb{\theta})^{\top}\big\}$ for $X\sim \mathrm{SN}(\mu,\sigma^2,\alpha)$. Here, the degree of the $U$-statistic is $\nu = 2$. For simplicity of notation, let $Z\sim \mathcal{N}(0,1)$. Using the above expressions for $\bb{s}_{\mathcal{U}}(\cdot \nvert \bb{\theta}_0)$ and $I_{\bb{\theta}_0,\mathcal{U}}$, one has, for $X\sim \mathrm{SN}(\mu_0,\sigma_0^2,0) \equiv \mathcal{N}(\mu_0,\sigma_0^2)$,
\[
\begin{aligned}
H_{\bb{\theta}_0}
&= \EE\left[\left\{\Phi\left(\frac{X - \mu_0}{\sigma_0}\right) - 1/2\right\}^2\right] = \EE\{\Phi^2(Z)\} - \EE\{\Phi(Z)\} + \frac{1}{4} = \frac{1}{3} - \frac{1}{2} + \frac{1}{4} = \frac{1}{12}, \\[1mm]
G_{\bb{\theta}_0,\mathcal{U}}
&=
\begin{bmatrix}
\sigma_0^{-1} \EE[Z \{\Phi(Z) - 1/2\}] \\[1mm]
-(2 \sigma_0^2)^{-1} \EE\{\Phi(Z) - 1/2\} + (2 \sigma_0^2)^{-1} \EE[Z^2 \{\Phi(Z) - 1/2\}]
\end{bmatrix}^{\top}
=
\begin{bmatrix}
1 / (2 \sigma_0 \sqrt{\pi}) \\
0
\end{bmatrix}^{\top}.
\end{aligned}
\]
These calculations are straightforward applications of integration by parts and were verified with \texttt{Mathematica}. In turn, one finds that
\[
\Sigma_{\bb{\theta}_0,\mathcal{U}} = 2^2 \left(\frac{1}{12} - \frac{1}{2 \sigma_0 \sqrt{\pi}} \times \sigma_0^2 \times \frac{1}{2 \sigma_0 \sqrt{\pi}}\right) = \frac{1}{3} - \frac{1}{\pi}.
\]

Furthermore, one has
\[
s_{\mathcal{K}}(x \nvert \bb{\theta}_0)
= \left.\frac{\partial}{\partial \alpha} \log f(x \nvert \bb{\theta})\right|_{\bb{\theta} = \bb{\theta}_0} = \frac{x - \mu_0}{\sigma_0} \times \frac{\phi\{\alpha_0 (x - \mu_0) / \sigma_0\}}{\Phi\{\alpha_0 (x - \mu_0) / \sigma_0\}} = \frac{x - \mu_0}{\sigma_0} \sqrt{\frac{2}{\pi}},
\]
and thus
\[
\begin{aligned}
G_{\bb{\theta}_0,\mathcal{K}}
&= \sqrt{\frac{2}{\pi}} \, \EE\left[\left(\frac{X - \mu_0}{\sigma_0}\right) \left\{\Phi\left(\frac{X - \mu_0}{\sigma_0}\right) - 1/2\right\}\right] = \sqrt{\frac{2}{\pi}} \, \EE[Z \{\Phi(Z) - 1/2\}] = \frac{1}{\pi \sqrt{2}}, \\[1mm]
\mathcal{I}_{\bb{\theta}_0,\mathcal{K},\mathcal{U}}^{\top}
&=
\EE\{s_{\mathcal{K}}(x \nvert \bb{\theta}_0) s_{\mathcal{U}}(x \nvert \bb{\theta}_0)\}
=
\begin{bmatrix}
\frac{1}{\sigma_0} \sqrt{\frac{2}{\pi}} \, \EE\left\{\left(\frac{X - \mu_0}{\sigma_0}\right)^2\right\} \\
\frac{-1}{2 \sigma_0^2} \sqrt{\frac{2}{\pi}} \, \EE\left(\frac{X - \mu_0}{\sigma_0}\right) + \frac{1}{2 \sigma_0^2} \sqrt{\frac{2}{\pi}} \, \EE\left\{\left(\frac{X - \mu_0}{\sigma_0}\right)^3\right\}
\end{bmatrix}
=
\begin{bmatrix}
\frac{1}{\sigma_0} \sqrt{\frac{2}{\pi}} \\[1mm]
0
\end{bmatrix}.
\end{aligned}
\]
It follows that
\[
M_{\bb{\theta}_0} = \nu \, \big(G_{\bb{\theta}_0,\mathcal{K}} - G_{\bb{\theta}_0,\mathcal{U}} I_{\bb{\theta}_0,\mathcal{U}}^{-1} I_{\!\bb{\theta}_0,\mathcal{K}, \mathcal{U}}^{\top}\big) = 2 \, \left(\frac{1}{\pi \sqrt{2}} - \frac{1}{2 \sigma_0 \sqrt{\pi}} \times \sigma_0^2 \times \frac{1}{\sigma_0} \sqrt{\frac{2}{\pi}}\right) = 0.
\]
The application of Theorems~\ref{thm:1}~and~\ref{thm:2} with $\Sigma_{\bb{\theta}_0,\mathcal{U}} = 1/3 - 1/\pi$ and $M_{\bb{\theta}_0} = 0$ concludes the proof.
\end{proof}

\begin{appendices}

\renewcommand{\thesection}{Appendix~\Alph{section}}

\section{Technical lemmas}\label{app:A}

The first lemma below follows from a small adaptation of a well-known uniform law of large numbers due to Le Cam \citep{LeCam1952phd}. Its proof follows the strategy described in Section 16 of \citet{MR1699953}, with a small adaptation to treat the case where the parameter space is not compact and the $U$-statistic has arbitrary degree. One also assumes that the estimator $\hat{\bb{\theta}}_n$ converges in $\PP$-probability to $\bb{\theta}_0$ instead of almost-surely, which leads to a weakened conclusion of uniform convergence in probability.

\begin{lem}\label{lem:1}
Let $\bb{X}_1, \bb{X}_2, \ldots$ be a sequence of iid random variables with common distribution function $F(\cdot \nvert \bb{\theta}_0)$, and let the estimator $\hat{\bb{\theta}}_n = \hat{\bb{\theta}}_n(\bb{X}_1, \ldots, \bb{X}_n)$ be convergent in $\PP$-probability to $\bb{\theta}_0\in \R^p$. Assume that $g: \R^m \times \dots \times \R^m \times \R^p \to \R$ is a Borel measurable function and that there exists a strictly positive real $\delta\in (0, \infty)$ such that $B_\delta (\boldsymbol{\theta}_0)\subseteq \Theta$ and
\begin{itemize}\setlength\itemsep{0em}

\item[\upshape(i)]
for all $\bb{x}_1, \ldots, \bb{x}_{\nu}\in \R^m$, the function $\bb{\theta}\mapsto g(\bb{x}_1, \ldots, \bb{x}_{\nu} \nvert \bb{\theta})$ is continuous on $B_{\delta}(\bb{\theta}_0) = \{\bb{\theta}\in \R^p : \|\bb{\theta} - \bb{\theta}_0\|_2 \leq \delta\}$;

\item[\upshape(ii)]
there exists a nonnegative function $K : \R^m \times \dots \times \R^m \to [0, \infty)$ such that $|g(\bb{x}_1, \ldots, \bb{x}_{\nu} \nvert \bb{\theta})| \leq K(\bb{x}_1, \ldots, \bb{x}_{\nu})$ for all $(\bb{x}_1, \ldots, \bb{x}_{\nu}, \bb{\theta})\in \R^m \times \dots \times \R^m \times B_{\delta}(\bb{\theta}_0)$, and $ \EE\{K(\bb{X}_1, \ldots, \bb{X}_{\nu})\} < \infty$;

\item[\upshape(iii)]
the kernel $g$ is non-degenerate, i.e., for $\bb{X}_1, \ldots, \bb{X}_n\stackrel{\mathrm{iid}}{\sim} F(\cdot \nvert \bb{\theta}_0)$, as $n \to \infty$,
\[
\sqrt{n} \, \binom{n}{\nu}^{-1} \sum_{(n, \nu)} g(\bb{X}_{i_1}, \ldots, \bb{X}_{i_{\nu}} \nvert \bb{\theta}_0) = \nu \frac{1}{\sqrt{n}} \sum_{i=1}^n g^{(1)}(\bb{X}_i \nvert \bb{\theta}_0) + O_{\PP_{\!\mathcal{H}_0}}\left(\frac{1}{\sqrt{n}}\right),
\]
where $g^{(1)}(\bb{x} \nvert \bb{\theta}_0) = \EE\{g(\bb{x}, \bb{X}_2, \ldots, \bb{X}_{\nu} \nvert \bb{\theta}_0)\}$ is a kernel of degree 1 and $ \EE[\{g^{(1)}(\bb{X} \nvert \bb{\theta}_0)\}^2]$ is positive.
\end{itemize}
Then, for $\rho_n = \|\hat{\bb{\theta}}_n - \bb{\theta}_0\|_2$, one has, as $n \to \infty$,
\[
M_n \equiv \sup_{\bb{\theta} \in B_{\rho_n}(\bb{\theta}_0)} \left|\binom{n}{\nu}^{-1} \sum_{(n, \nu)} g(\bb{X}_{i_1}, \ldots, \bb{X}_{i_{\nu}} \nvert \bb{\theta}) - \EE\{g(\bb{X}_1, \ldots, \bb{X}_{\nu} \nvert \bb{\theta}_0)\}\right| \stackrel{\PP}{\longrightarrow} 0.
\]
\end{lem}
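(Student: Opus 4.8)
The plan is to run the classical Wald--Le\,Cam ``sandwich'' argument for uniform laws of large numbers, adapted to three features of the present statement: the parameter space is not assumed compact, the radius $\rho_n$ of the ball over which the supremum is taken is random and tends to $0$ only in $\PP$-probability, and the empirical average is a $U$-statistic of arbitrary degree $\nu$ rather than a sample mean.

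First I would introduce, for each $\varepsilon\in(0,\delta]$, the upper and lower envelopes
\[
W_{\varepsilon}(\bb{x}_1,\ldots,\bb{x}_{\nu})=\sup_{\bb{\theta}\in B_{\varepsilon}(\bb{\theta}_0)}g(\bb{x}_1,\ldots,\bb{x}_{\nu},\bb{\theta}),\qquad w_{\varepsilon}(\bb{x}_1,\ldots,\bb{x}_{\nu})=\inf_{\bb{\theta}\in B_{\varepsilon}(\bb{\theta}_0)}g(\bb{x}_1,\ldots,\bb{x}_{\nu},\bb{\theta}).
\]
By the continuity in item~(i), the supremum and infimum over the compact ball $B_{\varepsilon}(\bb{\theta}_0)$ coincide with those over a fixed countable dense subset, so $W_{\varepsilon}$ and $w_{\varepsilon}$ are Borel measurable; by the domination in item~(ii) one has $|W_{\varepsilon}|\le K$ and $|w_{\varepsilon}|\le K$, hence both are integrable. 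Monotonicity in $\varepsilon$ is immediate from $B_{\varepsilon'}(\bb{\theta}_0)\subseteq B_{\varepsilon}(\bb{\theta}_0)$ for $\varepsilon'\le\varepsilon$, and continuity at $\bb{\theta}_0$ gives the pointwise limits $W_{\varepsilon}\downarrow g(\cdot,\bb{\theta}_0)$ and $w_{\varepsilon}\uparrow g(\cdot,\bb{\theta}_0)$ as $\varepsilon\downarrow0$. By dominated convergence, $\EE\{W_{\varepsilon}(\bb{X}_1,\ldots,\bb{X}_{\nu})\}\downarrow c$ and $\EE\{w_{\varepsilon}(\bb{X}_1,\ldots,\bb{X}_{\nu})\}\uparrow c$ as $\varepsilon\downarrow0$, where $c\equiv\EE\{g(\bb{X}_1,\ldots,\bb{X}_{\nu},\bb{\theta}_0)\}$. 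Given $\eta>0$, I would then fix $\varepsilon\in(0,\delta]$ small enough that both of these expectations are within $\eta$ of $c$.

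Next, let $U_n^{W_{\varepsilon}}$ and $U_n^{w_{\varepsilon}}$ denote the $U$-statistics of degree $\nu$ built from the integrable kernels $W_{\varepsilon}$ and $w_{\varepsilon}$ (after symmetrizing the kernel, if $g$ is not already symmetric in its first $\nu$ arguments, which leaves the associated $U$-statistic unchanged). On the event $\{\rho_n\le\varepsilon\}$ one has $B_{\rho_n}(\bb{\theta}_0)\subseteq B_{\varepsilon}(\bb{\theta}_0)$, so $U_n^{w_{\varepsilon}}\le\binom{n}{\nu}^{-1}\sum_{(n,\nu)}g(\bb{X}_{i_1},\ldots,\bb{X}_{i_{\nu}},\bb{\theta})\le U_n^{W_{\varepsilon}}$ simultaneously for all $\bb{\theta}\in B_{\rho_n}(\bb{\theta}_0)$, and hence
\[
M_n\le\max\big\{|U_n^{W_{\varepsilon}}-c|,\ |U_n^{w_{\varepsilon}}-c|\big\}\le\max\big\{|U_n^{W_{\varepsilon}}-\EE\{W_{\varepsilon}\}|,\ |U_n^{w_{\varepsilon}}-\EE\{w_{\varepsilon}\}|\big\}+\eta.
\]
By the law of large numbers for $U$-statistics with integrable kernel, the first term on the right-hand side converges to $0$ in $\PP$-probability; combined with the hypothesis $\rho_n=\|\hat{\bb{\theta}}_n-\bb{\theta}_0\|_2\stackrel{\PP}{\longrightarrow}0$, this yields
\[
\PP(M_n>2\eta)\le\PP(\rho_n>\varepsilon)+\PP\big(|U_n^{W_{\varepsilon}}-\EE\{W_{\varepsilon}\}|>\eta\big)+\PP\big(|U_n^{w_{\varepsilon}}-\EE\{w_{\varepsilon}\}|>\eta\big)\longrightarrow0
\]
as $n\to\infty$. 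Since $\eta>0$ is arbitrary, $M_n\stackrel{\PP}{\longrightarrow}0$, which is the assertion.

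I expect the step requiring the most care to be the bookkeeping around the \emph{random} ball $B_{\rho_n}(\bb{\theta}_0)$: because $\hat{\bb{\theta}}_n$ is consistent only in probability, one cannot argue that $B_{\rho_n}(\bb{\theta}_0)\subseteq B_{\varepsilon}(\bb{\theta}_0)$ holds eventually almost surely, and must instead run the sandwich on the event $\{\rho_n\le\varepsilon\}$ and absorb its complement into the final probability bound --- which is exactly why the conclusion is convergence in probability rather than the almost-sure statement in Le\,Cam's original lemma. The other ingredients --- measurability of the envelopes and the monotone/dominated convergence $\EE\{W_{\varepsilon}\}\downarrow c$, $\EE\{w_{\varepsilon}\}\uparrow c$ --- are routine; the passage from degree $\nu=1$ to arbitrary $\nu$ enters only through the invocation of the $U$-statistic law of large numbers for the kernels $W_{\varepsilon}$ and $w_{\varepsilon}$, and the non-compactness of $\Theta$ is harmless because the whole argument takes place inside the fixed ball $B_{\delta}(\bb{\theta}_0)$.
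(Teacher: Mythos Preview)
Your proposal is correct and takes essentially the same route as the paper: both condition on the event $\{\rho_n\le\varepsilon\}$ to replace the random ball by a small fixed ball and then invoke a uniform law of large numbers there, the only difference being that the paper cites Theorem~16(a) of Ferguson for that ULLN while you spell out the underlying Wald--Le\,Cam envelope argument directly and appeal to the $U$-statistic law of large numbers for the integrable envelope kernels $W_\varepsilon$ and $w_\varepsilon$. A pleasant by-product of your self-contained version is that it makes transparent that the non-degeneracy assumption~(iii) is nowhere used.
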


\begin{proof}[\bf Proof of Lemma~\ref{lem:1}]
Let a real $\varepsilon\in (0,\infty)$ be given. By (i), (ii) and Lebesgue's dominated convergence theorem, one knows that the map $\bb{\theta}\mapsto \EE\{g(\bb{X}_1, \ldots, \bb{X}_{\nu} \nvert \bb{\theta})\}$ is uniformly continuous on the compact ball $B_{\delta}(\bb{\theta}_0)$, so choose $\delta' = \delta'(\varepsilon)\in (0, \delta)$ small enough that
\[
\sup_{\bb{\theta}\in B_{\delta'}(\bb{\theta}_0)} \big| \EE\{g(\bb{X}_1, \ldots, \bb{X}_{\nu} \nvert \bb{\theta})\} - \EE\{g(\bb{X}_1, \ldots, \bb{X}_{\nu} \nvert \bb{\theta}_0)\}\big| < {\varepsilon}/{2}.
\]
A union bound then yields
\[
\PP(M_n > \varepsilon) \leq \PP\big\{B_{\rho_n}(\bb{\theta}_0) \not\subseteq B_{\delta'}(\bb{\theta}_0)\big\} + \PP\left[\sup_{\bb{\theta}\in B_{\delta'}(\bb{\theta}_0)} \bigg|\binom{n}{\nu}^{-1} \sum_{(n, \nu)} g(\bb{X}_{i_1}, \ldots, \bb{X}_{i_{\nu}} \nvert \bb{\theta}) - \EE\{g(\bb{X}_1, \ldots, \bb{X}_{\nu} \nvert \bb{\theta})\}\bigg| > \frac{\varepsilon}{2}\right].
\]
On the right-hand side, the first summand tends to zero as $n\to\infty$ by the assumption that $\hat{\bb{\theta}}_n$ converges in $\PP$-probability to $\bb{\theta}_0$. Under assumptions (i), (ii) and (iii), the second summand tends to zero by Theorem~16~(a) of \citet{MR1699953}.
\end{proof}

The second lemma states results about the moments of the $\mathrm{APD}_{\lambda}(1/2,\lambda,0,1)$ distribution defined in \eqref{eq:APD.density}. These facts are given with proof in Equations~(B.25)~and~(B.26) of the Supplementary~Material~B of \citet{MR4547729}. This information is reproduced here for the readers' convenience.

\begin{lem}\label{lem:2}
Let the reals $\lambda\in [1,\infty)$ and $a\in (-1,\infty)$ be given. For $Y \sim \mathrm{APD}_{\lambda}(1/2,\lambda,0,1)$, with $Y=(X-\mu_0)/\sigma_0$ and $X \sim \mathrm{APD}_{\lambda}(1/2,\lambda,\mu_0,\sigma_0)$, one has
\[
\EE\left(|Y|^a\right)
= \frac{\lambda^{a/\lambda-1}\Gamma\{(a+1)/\lambda\}}{\Gamma(1 + 1/\lambda)}, \quad
\EE\left(|Y|^a\ln |Y|\right)
= \frac{\lambda^{a/\lambda-2}\Gamma\{(a+1)/\lambda\}\big[\ln (\lambda) + \psi\{(a+1)/\lambda\}\big]}{\Gamma(1 + 1/\lambda)},
\]
where, for arbitrary real $x \in (0,\infty)$, $\Gamma(x) = \int_0^{\infty} t^{x-1} e^{-t} \rd t$ denotes Euler's gamma function and $\psi(x) = \rd\ln \{ \Gamma (x)\}/\rd x$ denotes the digamma function.
\end{lem}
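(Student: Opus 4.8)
The plan is to reduce the $\mathrm{APD}_{\lambda}(1/2,\lambda,0,1)$ law to the symmetric exponential power density and then evaluate the two moments by a single change of variables followed by a differentiation with respect to a parameter. First I would substitute $\theta_1 = 1/2$, $\theta_2 = \lambda$, $\mu = 0$, $\sigma = 1$ into the density \eqref{eq:APD.density}: one checks directly that $\delta_{\theta_1,\theta_2} = (1/2)^{\lambda}$ and that $A_{\theta_1,\theta_2}(y) = (1/2)^{\lambda}$ for every $y \in \R$, so the ratio $\delta_{\theta_1,\theta_2}/A_{\theta_1,\theta_2}(y)$ is identically $1$ and the density of $Y$ simplifies to $y \mapsto \lambda^{-1/\lambda}\{2\,\Gamma(1+1/\lambda)\}^{-1}\exp(-|y|^{\lambda}/\lambda)$, that is, the $\mathrm{EPD}_{\lambda}(0,1)$ density $g_{\lambda}(y \nvert 0,1)$, which is symmetric about $0$.

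Next I would compute $\EE(|Y|^{a})$ for $a \in (-1,\infty)$. Using the symmetry of the density, this equals $\lambda^{-1/\lambda}\,\Gamma(1+1/\lambda)^{-1}\int_{0}^{\infty} y^{a}\,e^{-y^{\lambda}/\lambda}\,\rd y$, an integral that is finite at the origin precisely because $a > -1$ and at infinity because of the stretched-exponential decay. The substitution $u = y^{\lambda}/\lambda$ turns it into $\lambda^{(a+1)/\lambda - 1}\int_{0}^{\infty}u^{(a+1)/\lambda - 1}e^{-u}\,\rd u = \lambda^{(a+1)/\lambda - 1}\,\Gamma\{(a+1)/\lambda\}$, and absorbing the prefactor gives the first claimed identity.

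For the second identity I would differentiate the first with respect to $a$. Since $\partial_{a}|y|^{a} = |y|^{a}\ln|y|$, it suffices to justify interchanging $\partial_{a}$ with the integral, whereupon $\EE[|Y|^{a}\ln|Y|] = \partial_{a}\EE(|Y|^{a})$; the right-hand side is then obtained by elementary calculus from $\partial_{a}\lambda^{a/\lambda - 1} = \lambda^{-1}\ln(\lambda)\,\lambda^{a/\lambda - 1}$ and $\partial_{a}\Gamma\{(a+1)/\lambda\} = \lambda^{-1}\psi\{(a+1)/\lambda\}\,\Gamma\{(a+1)/\lambda\}$, which reproduces the stated expression.

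The only delicate point is the differentiation under the integral sign, for which one needs a locally uniform integrable majorant of $|y|^{a}\,|\ln|y||\,e^{-|y|^{\lambda}/\lambda}$ as $a$ ranges over a small neighborhood of the given value. This is handled by treating the two ends separately: near $0$ the factor $|y|^{a}|\ln|y||$ is integrable as long as $a$ stays bounded below away from $-1$, and near $\infty$ the exponential factor dominates $|y|^{a}|\ln|y||$ for every fixed $a$, so choosing the neighborhood of $a$ with closure inside $(-1,\infty)$ makes both bounds uniform and the dominated convergence theorem applies. Alternatively, one can simply invoke Equations~(B.25) and~(B.26) of the Supplementary Material of \citet{MR4547729}, where these moment formulas are derived.
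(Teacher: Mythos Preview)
Your proposal is correct. The paper itself does not prove Lemma~\ref{lem:2}; it merely cites Equations~(B.25)--(B.26) of the Supplementary Material of \citet{MR4547729}, so your self-contained derivation---simplifying the $\mathrm{APD}_{\lambda}(1/2,\lambda,0,1)$ density to the $\mathrm{EPD}_{\lambda}(0,1)$ density, evaluating $\EE(|Y|^{a})$ via the substitution $u=y^{\lambda}/\lambda$, and then differentiating in $a$ with a dominated-convergence justification---is more than the paper provides and is the standard argument that the cited supplement almost certainly contains.
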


\end{appendices}

\section*{Funding}
\addcontentsline{toc}{section}{Funding}

Genest's research is funded in part by the Canada Research Chairs Program (Grant no.~950--231937) and the Natural Sciences and Engineering Research Council of Canada (RGPIN-2024-04088). Part of this work was carried out while Ouimet was a CRM-Simons postdoctoral fellow at the Centre de recherches math\'ematiques (Montr\'eal, Canada). His current funding as a postdoctoral researcher at McGill University was made possible through a contribution to Genest's research program from the Trottier Institute for Science and Public Policy.

\section*{Acknowledgments}
\addcontentsline{toc}{section}{Acknowledgments}

The authors thank the anonymous reviewers for their thoughtful and constructive feedback. The detailed comments and suggestions offered by the referees have helped improve both the quality and depth of this paper.

\section*{Supplementary material}
\addcontentsline{toc}{section}{Supplementary material}

Supplementary material available at the journal's website contains the \textsf{R} code to reproduce the examples and graphs contained in Section~\ref{sec:5}.

\addcontentsline{toc}{section}{References}

\bibliographystyle{myjmva}
\bibliography{bib}
\end{document}